\def\Xint#1{\mathchoice
{\XXint\displaystyle\textstyle{#1}}%
{\XXint\textstyle\scriptstyle{#1}}%
{\XXint\scriptstyle\scriptscriptstyle{#1}}%
{\XXint\scriptscriptstyle%
\scriptscriptstyle{#1}}%
\!\int}
\def\XXint#1#2#3{{\setbox0=\hbox{$#1{#2#3}{%
\int}$ }
\vcenter{\hbox{$#2#3$ }}\kern-.6\wd0}}
\def\barint{\, \Xint -} 
\def\bariint{\barint_{} \kern-.4em \barint}
\def\bariiint{\bariint_{} \kern-.4em \barint}
\renewcommand{\iint}{\int_{}\kern-.34em \int} 
\renewcommand{\iiint}{\iint_{}\kern-.34em \int} 
\DeclareMathAlphabet{\mathcal}{OMS}{cmsy}{m}{n}
\theoremstyle{plain}
\newtheorem{theorem}{Theorem}[section]
\newtheorem{definition}[theorem]{Definition}
\newtheorem{lemma}[theorem]{Lemma}
\newtheorem{corollary}[theorem]{Corollary}
\newtheorem{proposition}[theorem]{Proposition}
\theoremstyle{definition}
\newtheorem{remark}[theorem]{Remark}
\newcommand{\R}{\mathbb{R}}
\newcommand{\N}{\mathbb{N}}
\newcommand{\Z}{\mathbb{Z}}
\newcommand{\T}{\mathbb{T}}
\newcommand{\X}{\mathbf{X}}
\newcommand{\Y}{\mathbf{Y}}
\newcommand{\bZ}{\mathbf{Z}}
\newcommand{\p}{\partial}
\newcommand{\la}{\langle}
\newcommand{\ra}{\rangle}
\newcommand{\les}{\lesssim}
\newcommand{\ges}{\gtrsim}
\newcommand{\norm}[1]{\left\| #1 \right\|}
\renewcommand{\:}{\colon}
\let\div\relax
\DeclareMathOperator{\div}{div}
\DeclareMathOperator{\sgn}{sgn}
\let\tilde\relas
\newcommand{\tilde}[1]{\widetilde{#1}}
\newcommand{\red}[1]{{\color{red} #1}}
\numberwithin{equation}{section}
\newcommand{\nuhalf}{\nu^{\sfrac{1}{2}}}
\newcommand{\nuhalfk}{\nu^{\sfrac{k}{2}}}
\newcommand{\ad}{\textnormal{ad}}
\title{Enhanced dissipation and H{\"o}rmander's hypoellipticity}
\author{Dallas Albritton}
\address{Courant Institute of Mathematical Sciences, New York University, New York, NY 10012}
\email{daa399@cims.nyu.edu}
\author{Rajendra Beekie}
\address{Courant Institute of Mathematical Sciences, New York University, New York, NY 10012}
\email{beekie@cims.nyu.edu}
\author{Matthew Novack}
\address{Courant Institute of Mathematical Sciences, New York University, New York, NY 10012}
\email{mdn7@cims.nyu.edu}
\begin{document}

\maketitle

\begin{abstract}
We examine the phenomenon of enhanced dissipation from the perspective of H{\"o}rmander's classical theory of second order hypoelliptic operators~\cite{hormander67}. Consider a passive scalar in a shear flow, whose evolution is described by the advection--diffusion equation
\[ \p_t f + b(y) \p_x f - \nu \Delta f = 0 \text{ on } \T \times (0,1) \times \R_+ \]
with periodic, Dirichlet, or Neumann conditions in $y$. We demonstrate that decay is enhanced on the timescale $T \sim \nu^{-(N+1)/(N+3)}$, where $N-1$ is the maximal order of vanishing of the derivative $b'(y)$ of the shear profile and $N=0$ for monotone shear flows. In the periodic setting, we recover the known timescale of Bedrossian and Coti Zelati~\cite{jacobmicheleshear}. Our results are new in the presence of boundaries.
\end{abstract}


\setcounter{tocdepth}{1}
\tableofcontents

\section{Introduction}

The advection--diffusion equation
\begin{equation}
\label{eq:fpde1}
\begin{aligned}
\p_t f + \boldsymbol{u} \cdot \nabla f -  \nu \Delta f &= 0 \\
\end{aligned}
\end{equation}
describes the evolution of a passive scalar $f$ which is transported by a divergence-free velocity field $\boldsymbol{u}$ and simultaneously diffuses. It is well known that the interplay between transport $\boldsymbol{u} \cdot \nabla$ and weak diffusion $\nu \Delta$ ($0 < \nu \ll 1$) may cause solutions of~\eqref{eq:fpde1} to decay more rapidly than 
by diffusion alone. This phenomenon is known as \emph{enhanced dissipation}. 

Our goal is to clarify the relationship between enhanced dissipation and H{\"o}rmander's classical theory of hypoelliptic second order operators, developed in~\cite{hormander67}. We restrict our attention to the class of \emph{shear flows} $\boldsymbol{u} = (b(y),0)$. 
Enhanced dissipation within this class has already been studied by Bedrossian and Coti Zelati in~\cite{jacobmicheleshear}. We will sharpen and extend their results, though perhaps the main interest of our work is in the method we employ, which is a new application of the old ideas of H{\"o}rmander. 

We consider~\eqref{eq:fpde1} on the spacetime domain $\Omega \times (0,+\infty)$, where $\Omega = \T \times D$ and $D = \T$ or $(0,1)$. By convention, we identify functions on the torus $\T:=\R/\Z$ with functions  satisfying periodic boundary conditions on $(0,1)$. When $D = (0,1)$, we impose homogeneous Dirichlet or Neumann conditions on $\{ y = 0,1 \}$. The shear profile $b \in C^\infty(\bar{D})$ is assumed to have precisely $M \geq 0$ critical points. When $M \geq 1$, we denote these by $y_i$, $1 \leq i \leq M$. We further require that the maximal order of vanishing $N_i \in \N$ at $y_i$ is finite:
\begin{subequations}\label{eq:crit:points}
\begin{align}
    b^{(k)}(y_i) = 0 \text{ for all } 1 \leq k \leq N_i \label{eq:crit:points:one} \\ 
    b^{(N_i+1)}(y_i) \neq 0 \, . \label{eq:crit:points:two}
\end{align}
\end{subequations}
Let $N = \max_{1 \leq i \leq M} N_i$, or $N = 0$ when $M = 0$. 

Since $\boldsymbol{u} \cdot \nabla = b(y) \p_x$ in~\eqref{eq:fpde1}, the $x$-averages
\begin{equation}\notag
    \la f \ra(y,t) = \int_{\T} f(x,y,t) \, dx
\end{equation}
satisfy the one-dimensional heat equation
\begin{equation}\notag
    \p_t \la f \ra - \nu \p_{yy} \la f \ra = 0 \, 
\end{equation}
and in general decay only on the diffusive timescale $\sim \nu^{-1}$. To extract an enhanced dissipation timescale, it is necessary to subtract $\langle f \rangle$. Without loss of generality, we thus require that the initial data $f_{\rm in}$ satisfies
\begin{equation}\notag
    \la f_{\rm in} \ra(y) := \int_{\T} f_{\rm in}(x,y) \, dx \equiv 0 \, .
\end{equation}

It is instructive to extract the dissipation timescale in the Couette example $b(y) = y$ posed for $(x,y)\in(\R / 2\pi \Z) \times \R$. In this case,~\eqref{eq:fpde1} is explicitly solvable, as was already known to Kelvin in~\cite{kelvin1887stability}. Upon taking the Fourier transform, we have
\begin{equation}\notag
    \p_t \widehat{f} - k \p_\eta \widehat{f} + \nu (|k|^2 + |\eta|^2) \widehat{f} = 0 \, .
\end{equation}
The solution is
\begin{equation}
    \label{eq:couetteexactsol}
    \widehat{f}(k,\eta,t) = e^{-\nu k^2 t} e^{-\nu t (\eta^2 + \eta k t + k^2 t^2/3)} \widehat{f_{\rm in}}(k,\eta + kt) \, .
\end{equation}
The factor $e^{-\nu |k|^2 t^3/3}$ dominates for large times and yields the enhancement timescale $T \sim \nu^{-1/3}$, since $|k| \ges 1$ when we disregard the $k = 0$ modes. The fundamental solution on $\R^2 \times \R_+$ was also famously computed by Kolmogorov~\cite{kolmogorov} (see also H{\"o}rmander's introduction~\cite{hormander67}).


In~\cite{jacobmicheleshear}, Bedrossian and Coti Zelati identified the following enhanced dissipation timescales depending on the shear profile:
\begin{equation}
    \label{eq:thetimescale}
    T \sim \nu^{-\frac{N+1}{N+3}}
\end{equation}
in the periodic-in-$y$ setting, up to a factor $(1 + \log \nu^{-1})^2$ which was removed by Wei~\cite{dongyiweiresolvent}. The enhancement is measured in terms of the `flatness' of the shear profile at its critical points. Since the Couette flow has no critical points, the above timescale for $N=0$ matches the timescale extracted from \eqref{eq:couetteexactsol}. These timescales are essentially sharp, as demonstrated by Drivas and Coti Zelati in~\cite{drivascotizelati}. In the Neumann setting, Bedrossian and Coti Zelati obtain the same timescale except in the monotone case $N=0$, in which the timescale $T \sim \nu^{-1/2}$ (up to a logarithmic correction) is obtained instead of $T \sim \nu^{-1/3}$. We demonstrate that the enhanced dissipation timescale is, at worst,~\eqref{eq:thetimescale} in the Dirichlet and Neumann settings for all $N \in \N_0$.


To better clarify the connections to hypoellipticity, we consider the \emph{hypoelliptic} advection-diffusion equation proposed in~\cite{jacobmicheleshear}:
\begin{equation}
\label{eq:fpde}
\begin{aligned}
\p_t f + b(y) \p_x f -  \nu \p_{yy} f &= 0.
\end{aligned}
\end{equation}
The PDEs~\eqref{eq:fpde1} and~\eqref{eq:fpde} are actually \emph{equivalent} in the following sense: If $f$ is a solution of~\eqref{eq:fpde} with initial data $f_{\rm in}$, then $g = e^{\nu t \p_{xx}} f$ is a solution of~\eqref{eq:fpde1}. In the sequel, we focus only on smooth solutions of~\eqref{eq:fpde}: For each $f_{\rm in} \in C^\infty(\bar{\Omega})$ satisfying the periodic, Dirichlet, or Neumann boundary conditions, there exists a unique smooth solution $f \in C([0,+\infty);L^2(\Omega)) \cap  C^\infty(\bar{\Omega} \times (0,+\infty))$ to~\eqref{eq:fpde} satisfying $f(\cdot,0) = f_{\rm in}$ and the desired boundary conditions. Smoothness up to $t= 0$ would require additional compatibility conditions on $f_{\rm in}$. \emph{We assume that $\la f \ra = \la f_{\rm in} \ra \equiv 0$.}


Our main theorems are the following:

\begin{theorem}[Enhanced dissipation]\label{thm:enhancement}
There exist $d_0, \nu_0 > 0$ and $C>1$, depending only on $b$, such that the solution $f$ of~\eqref{eq:fpde} with periodic, Dirichlet, or Neumann conditions satisfies
\begin{equation}\notag
	\norm{f(\cdot,t)}_{L^2(\Omega)} \leq C \norm{f_{\rm in}}_{L^2(\Omega)} \exp\left( - d_0 \nu^{\frac{N+1}{N+3}} t \right)
\end{equation}
for all $t > 0$ and $\nu \in (0,\nu_0)$.
\end{theorem}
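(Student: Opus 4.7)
The plan is to reduce to a one-dimensional problem by Fourier decomposition in $x$, and then to establish a Hörmander-type hypoelliptic resolvent estimate that captures the interplay between diffusion in $y$ and streamwise advection.

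First I would write $f(x,y,t) = \sum_{k \in \Z \setminus \{0\}} f_k(y,t) e^{2 \pi i k x}$, noting that the mean-zero assumption eliminates the $k = 0$ mode, which experiences no mixing. Each Fourier coefficient satisfies
\[ \p_t f_k + L_k f_k = 0, \qquad L_k := -\nu \p_{yy} + i k b(y), \]
on $D$ with the corresponding boundary conditions. By Parseval, Theorem \ref{thm:enhancement} reduces to the uniform-in-$k$ semigroup bound $\norm{e^{-tL_k}}_{L^2(D) \to L^2(D)} \leq C \exp(-d_0 \nu^{(N+1)/(N+3)} t)$ for every $k \in \Z \setminus \{0\}$.

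For each such $k$, this semigroup decay is equivalent, via the Gearhart--Prüss theorem in the quantitative form used in recent enhanced-dissipation work, to a uniform resolvent bound
\[ \sup_{\lambda \in \R} \norm{(L_k - i \lambda)^{-1}}_{L^2 \to L^2} \leq C \nu^{-(N+1)/(N+3)}. \]
I would prove this bound by a Hörmander--Kohn-style commutator calculation, organized as an augmented energy. The basic ingredients are that $\mathrm{Re}\, \la L_k u, u \ra = \nu \norm{u'}^2$ gives control of $y$-derivatives, that the bracket $[\p_y, i k b] = i k b'$ injects streamwise control, and that iterated brackets $\ad(\p_y)^j(i k b) = i k b^{(j)}$ provide progressively more of it. Since $b^{(j)}(y_i) = 0$ for $1 \leq j \leq N$ and $b^{(N+1)}(y_i) \neq 0$ at the worst critical point, $N+1$ brackets are required to produce a non-degenerate multiple of $i k$. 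I would package this information into a functional of the schematic form
\[ \Phi_k(u) := \norm{u}^2 + \sum_{j=1}^{N+1} \alpha_j \, \nu^{p_j} |k|^{q_j} \, \mathrm{Re} \, \la B_j u, B_{j+1} u \ra, \]
where $B_j$ is a weighted combination of $b^{(\leq j)}$ and $\p_y^{\leq j}$, with small parameters $\alpha_j$ and exponents $p_j, q_j$ fixed by the natural parabolic scaling. The aim is $\Phi_k(u) \sim \norm{u}^2$ together with $\frac{d}{dt} \Phi_k(f_k) \leq - d_0 \nu^{(N+1)/(N+3)} \Phi_k(f_k)$. The exponent $(N+1)/(N+3)$ should arise by matching the diffusive length $(\nu t)^{1/2}$ against the oscillation scale coming from $k b(y) \sim k (y-y_i)^{N+1}$: balance forces $|y - y_i| \sim \nu^{1/(N+3)}$ and $t \sim \nu^{-(N+1)/(N+3)}$.

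The principal obstacle is making these estimates uniform both near and far from critical points, and compatible with all three boundary conditions. Near a critical point the rescaling $\tilde y = (y - y_i) \nu^{-1/(N+3)}$ converts the problem into a Kolmogorov--Hörmander operator on $\R$, where commutator identities close cleanly; away from critical points the locally monotone flow gives the stronger $\nu^{-1/3}$ Couette rate, and a partition of unity would glue the two regimes. Boundary conditions enter through the integration-by-parts identities used to compute $\frac{d}{dt}\Phi_k$; for Dirichlet and Neumann I expect to exploit an odd or even reflection of $u$ and $b$ across $\{y=0,1\}$ to reduce to a variant of the periodic analysis, though the reflected shear profile may develop new critical points at the boundary whose flatness must be tracked to preserve the target exponent. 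Controlling the resulting boundary contributions while maintaining the precise scaling of $\Phi_k$ is where the technical core of the argument will lie.
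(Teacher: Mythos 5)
Your proposal follows a genuinely different route from the paper: it is essentially the Fourier-mode-by-mode resolvent/hypocoercivity strategy of Bedrossian--Coti Zelati and Wei, i.e.\ reduce to the one-dimensional operators $L_k=-\nu\p_{yy}+ikb(y)$, prove a resolvent bound $\sup_{\lambda\in\R}\|(L_k-i\lambda)^{-1}\|\les \nu^{-(N+1)/(N+3)}$ via an augmented energy $\Phi_k$, and convert to semigroup decay by a quantitative Gearhart--Pr\"uss theorem. In the periodic-in-$y$ setting this program has been carried out and does yield the stated timescale, so there your outline is a correct (if schematic) proof sketch. The paper deliberately avoids this route: it never decomposes in $k$ and never passes through the resolvent. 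Instead it works at the level of the basic $L^2$ energy estimate, which controls $\nu\|\p_y f\|_{L^2_{t,x,y}}^2$ and, via the equation, $\nu^{-1}\|X_0f\|_{L^2_{t,x}H^{-1}_y}^2$; it then proves a finite-difference H\"ormander/subelliptic inequality (Theorem~\ref{thm:improvedsubellipticestimate}) on the space--time slab $\Omega\times(0,c_0^2)$ with $c_0^2=\nu^{-(N+1)/(N+3)}$, and concludes by a discrete-in-time Gr\"onwall iteration. What the paper's approach buys is precisely compatibility with boundaries and with unsteady shears, at the price of working with Besov-type quotient norms and admissible finite differences.

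The genuine gap in your proposal is the boundary case, which is the new content of the theorem. Your proposed fix --- odd/even reflection across $\{y=0,1\}$ --- does not preserve the hypotheses: to keep the equation invariant one must extend $b$ evenly, and the even extension of a shear with $b'(0)\neq 0$ has $\tilde b'(0^-)=-b'(0^+)$, i.e.\ a corner, so the extended profile is only Lipschitz; smoothing it out forces an artificial critical point at the boundary with $b'=0$ there, which feeds an effective $N\geq 1$ into your scaling balance and degrades the monotone Neumann rate to at best $\nu^{-1/2}$. This is exactly the obstruction that produced the suboptimal Neumann timescale in the prior hypocoercivity literature, and it is not resolved by the partition-of-unity/rescaling argument you describe, since the degeneracy sits on the boundary itself rather than at an interior critical point. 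The paper sidesteps this entirely by never extending anything: the bracket and interpolation inequalities are formulated with one-sided finite differences on half-domains $\T\times I\times J$ with $I\in\{(0,\sfrac12),(\sfrac12,1)\}$, flowing along $\pm Y$ and $\pm X_0$ so that all translates remain inside the domain, and the duality pairing defining $\Y^*$ is set up so that the single integration by parts in~\eqref{eq:X0:bound} is justified by the boundary conditions. To repair your argument you would need either a boundary-adapted commutator scheme for $\Phi_k$ that does not rely on reflection, or to abandon the resolvent reduction in favor of an interior-only estimate of the kind the paper constructs.
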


\begin{theorem}[Gevrey-in-$x$ smoothing]\label{thm:smoothing}
There exist $d_0, \nu_0 > 0$ depending only on $b$ such that the solution $f$ of~\eqref{eq:fpde} is instantly Gevrey-$p$ regular for all $p>\frac{N+3}{2}$ and $\nu \in (0,\nu_0)$:
\begin{equation}\notag
	\left\lVert \exp \left( d_0 \nu^{\frac{N+1}{N+3}} |\p_x|^{\frac{1}{p}} t \right) f(\cdot,t) \right\rVert_{L^2(\Omega)} \les_{b,p} \norm{f_{\rm in}}_{L^2(\Omega)}.
\end{equation}
\end{theorem}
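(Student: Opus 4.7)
My plan is to reduce Theorem~\ref{thm:smoothing} to Theorem~\ref{thm:enhancement} by a Fourier decomposition in $x$, and for each nonzero mode to exploit a rescaling symmetry to convert the $\nu$-dependence into a $k$-dependence. Writing $f(x,y,t) = \sum_{k \in \Z} \hat f_k(y,t)\, e^{2\pi i k x}$, each Fourier coefficient satisfies the reduced equation
\begin{equation}\label{eq:prop:modeeqn}
    \p_t \hat f_k + 2\pi i k\, b(y) \hat f_k - \nu \p_{yy} \hat f_k = 0
\end{equation}
with the same boundary conditions in $y$. The hypothesis $\la f_{\rm in} \ra \equiv 0$ forces $\hat f_0 \equiv 0$, so only $k \in \Z \setminus \{0\}$ matters. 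The objective is the sharp per-mode decay bound
\begin{equation}\label{eq:prop:modebound}
    \|\hat f_k(\cdot, t)\|_{L^2_y} \le C \|\hat f_k(\cdot, 0)\|_{L^2_y} \exp\!\left( -d_0 \nu^{\frac{N+1}{N+3}} |k|^{\frac{2}{N+3}} t \right).
\end{equation}

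To obtain \eqref{eq:prop:modebound}, fix $k \neq 0$ and set $\tilde u(y,\tilde t) := \hat f_k(y, \tilde t/|k|)$. A direct computation using \eqref{eq:prop:modeeqn} shows that $\tilde u$ satisfies the hypoelliptic advection--diffusion equation with shear profile $\sgn(k)\,b(y)$ and effective viscosity $\tilde \nu := \nu/|k|$. The reflection $b \mapsto -b$ preserves both the set of critical points and their orders of vanishing, so the constants furnished by Theorem~\ref{thm:enhancement} can be taken uniform in $\sgn(k)$. I then embed $\tilde u$ as the mode-$1$ component $F(x,y,\tilde t) := e^{2\pi i x} \tilde u(y,\tilde t)$ of a full spacetime field, noting that $F$ has zero $x$-average, and apply Theorem~\ref{thm:enhancement} with viscosity $\tilde \nu$. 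Since $\tilde \nu = \nu/|k| \le \nu < \nu_0$ automatically whenever $|k| \ge 1$, the hypothesis is always met, and the decay factor $\exp(-d_0 \tilde \nu^{(N+1)/(N+3)} \tilde t)$ becomes \eqref{eq:prop:modebound} upon undoing the rescaling, using $\tilde \nu^{(N+1)/(N+3)} \cdot |k| = \nu^{(N+1)/(N+3)} |k|^{2/(N+3)}$.

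The Gevrey bound then follows from Plancherel in $x$. For any $\delta \in (0, d_0]$,
\[ \bigl\| e^{\delta \nu^{\frac{N+1}{N+3}} |\p_x|^{1/p} t} f(\cdot,t) \bigr\|_{L^2}^2 = \sum_{k \neq 0} e^{2\delta \nu^{\frac{N+1}{N+3}} |k|^{1/p} t} \|\hat f_k(\cdot,t)\|_{L^2_y}^2 , \]
and substituting \eqref{eq:prop:modebound} produces the exponential factor $\exp\!\bigl(2\nu^{\frac{N+1}{N+3}} t (\delta |k|^{1/p} - d_0 |k|^{2/(N+3)})\bigr)$. Since $p > (N+3)/2$ is equivalent to $1/p < 2/(N+3)$, the bracket is nonpositive for every integer $|k| \ge 1$ provided $\delta \le d_0$: the constraint is tightest at $|k|=1$, where both powers equal $1$ and reduce to $\delta \le d_0$, while for $|k| \ge 2$ the ratio $|k|^{2/(N+3)}/|k|^{1/p} \ge 2^{2/(N+3)-1/p} > 1$ gives strict dominance. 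Choosing $\delta = d_0$ and summing yields the claimed Gevrey inequality.

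The principal obstacle, as I see it, is verifying the rescaling in the second paragraph rather than the summation. One must check that Theorem~\ref{thm:enhancement} applies to complex-valued initial data (immediate from linearity and the real-valued coefficients, by splitting into real and imaginary parts), that the constants depend on $b$ only through its critical-point data and are therefore insensitive to the reflection $b \mapsto -b$, and that the $y$-boundary conditions are trivially preserved under the time rescaling. The summation, once the $|k|^{2/(N+3)}$ decay rate is in hand, is elementary and essentially forces the threshold $p = (N+3)/2$.
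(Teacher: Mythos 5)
Your proposal is correct, and it takes a genuinely different route from the paper. The paper never Fourier-decomposes in $x$: it iterates the improved subelliptic estimate (Theorem~\ref{thm:improvedsubellipticestimate}) on successive time slabs of length $c_0^2=\nu^{-\frac{N+1}{N+3}}$, converts the resulting control of $\norm{f}_{Q^{s_{N+1}}_{\p_x}}$ into control of $\norm{|\p_x|^{\frac{1}{N+3}}q(|\p_x|)f}_{L^2}$ via a Littlewood--Paley characterization with a logarithmic weight $q$, uses a pigeonhole argument in time together with the monotonicity of this quantity (since $|\p_x|^{\frac{1}{N+3}}q(|\p_x|)f$ again solves~\eqref{eq:fpde}) to get a pointwise-in-time gain of one factor of $|\p_x|^{\frac{1}{N+3}}q(|\p_x|)$ at cost $t^{-1}\nu^{-\frac{N+1}{N+3}}$, and then iterates $k$ times on subintervals of length $t/k$, summing the factorial gains via Stirling to build the Gevrey exponential; the weight $q$ is precisely what forces the strict inequality $p>\frac{N+3}{2}$. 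Your argument instead exploits the exact decoupling of $x$-modes and the scaling symmetry $t\mapsto |k|t$, $\nu\mapsto\nu/|k|$, $b\mapsto\sgn(k)b$ to read the sharp per-mode rate $\nu^{\frac{N+1}{N+3}}|k|^{\frac{2}{N+3}}$ directly off Theorem~\ref{thm:enhancement}, and the Plancherel summation is then immediate (the prefactor $C>1$ and the harmless $2\pi$ discrepancy between $|k|$ and the eigenvalues of $|\p_x|$ only affect constants). What each approach buys: yours is shorter, requires no new estimates beyond Theorem~\ref{thm:enhancement}, and in fact reaches the endpoint $p=\frac{N+3}{2}$ with no logarithmic loss --- extending to the Dirichlet and Neumann settings what the paper attributes to Wei in the periodic case --- while the paper's Fourier-free iteration is consistent with its stated aim of physical-space robustness (it would survive perturbations that break the exact mode decoupling, whereas your reduction is tied to coefficients independent of $x$). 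The caveats you flag (complex data handled by splitting into real and imaginary parts, uniformity of the constants under $b\mapsto -b$ since the critical-point structure is unchanged, and $\nu/|k|\le\nu<\nu_0$) are exactly the right ones and are all easily verified.
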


\begin{remark}[Improvements]
Theorems~\ref{thm:enhancement} and~\ref{thm:smoothing} improve the main theorems of Bedrossian and Coti Zelati~\cite{jacobmicheleshear} and Wei~\cite{dongyiweiresolvent} in the following ways:
\begin{itemize}[leftmargin=*]
    \item We treat Dirichlet conditions in addition to periodic and Neumann conditions.
    \item In the Neumann setting, we improve the timescale from $T \sim \nu^{-1/2}$ in~\cite{jacobmicheleshear} to $T \sim \nu^{-1/3}$ when $N=0$, and we remove the additional logarithmic factors when $N \in \N_0$.
\end{itemize}
\end{remark}

In the periodic-in-$y$ setting, $p=\frac{N+3}{2}$ is also possible in Theorem~\ref{thm:smoothing}, as demonstrated by Wei~\cite{dongyiweiresolvent} using resolvent estimates and a  Gearhardt--Pr{\"u}ss-type theorem.

\begin{remark}[Generalizations]
Our approach generalizes, with minimal effort, to divergence-form operators $\p_y (a(y) \p_y \cdot )$ satisfying $a \in L^\infty(D)$ and $a \geq c > 0$. It is also possible to generalize to operators $\div (A(y) \nabla \cdot)$. The norms in the `interpolation inequality',  Proposition~\ref{lemma:Z0:estimate}, would need to be adjusted to incorporate $x$ derivatives.\footnote{Roughly speaking, one should measure $\| \nabla f \|_{L^2}$ and $\| X_0 f \|_{L^2_t \dot H^{-1}_{x,y}}$ instead of $\| \p_y f \|_{L^2}$ and $\| X_0 f \|_{L^2_{t,x} \dot H^{-1}_y}$.} Doing so would allow us to treat circular flows $\boldsymbol{u} = r \Omega(r) e_\theta$ in an annulus $r_0 < r < r_1$, where $\Omega$ has finitely many critical points. It is also possible to generalize to shear flows which vary on the enhancement timescale $\nu^{-\frac{N+1}{N+3}}$, though the time translation symmetry is convenient in the proof of Proposition~\ref{lemma:Z0:estimate}. We view these generalizations  as an indication that, in a certain sense, H{\"o}rmander's method is robust.
\end{remark}

The most na{\"i}ve approach to Theorem~\ref{thm:enhancement} is to try to generalize the $L^2$ energy estimate for the heat equation $\p_t f = \nu \Delta f$ on $\T^2 \times \R_+$:
\begin{equation}
    \label{eq:themostbasicenergyestimate}
    \frac{1}{2} \frac{d}{dt} \| f \|_{L^2}^2 = - \nu \| \nabla f \|_{L^2}^2 \leq - C_P^{-1} \nu \| f \|_{L^2}^2 \, ,
\end{equation}
where $C_P = 4\pi^2$ is the Poincar{\'e} constant and $f$ has zero mean. This differential inequality implies $\| f \|_{L^2}^2 \leq e^{-\nu t/C_P} \| f_{\rm in} \|_{L^2}^2$. For the hypoelliptic PDE~\eqref{eq:fpde}, $- \nu \| \p_y f \|_{L^2}^2$ appears on the right-hand side of the energy estimate, and we lack a Poincar{\'e} inequality for this quantity. The goal of hypoellipticity is precisely to extract differentiability-in-$x$ and, thus, a Poincar{\'e} inequality, from the $L^2$ energy estimate and the PDE. In this way, we salvage the na{\"i}ve approach.

The desire to reproduce~\eqref{eq:themostbasicenergyestimate} is also a motivation for hypocoercivity. We compare the two approaches after we sketch the main idea. \\ 

In the classical terminology, a partial differential operator $P$ with smooth coefficients on an open set $U \subset \R^d$ is \emph{hypoelliptic} if distributional solutions of $Pu = f$ are smooth in $U$ for $f \in C^\infty(U)$. 
 Let $X_0, X_1, \hdots, X_J$ be smooth vector fields on $U$.
 In~\cite{hormander67}, H{\"o}rmander characterized hypoellipticity for second order operators
\begin{equation}\notag
    L = \sum_{j=1}^J X_j^* X_j + X_0 \, .
\end{equation}
$L$ is hypoelliptic \emph{if and only if} the Lie algebra generated by $\{ X_j \}_{j=0}^J$ spans $\R^d$ at every point $x_0 \in U$. 
Perhaps the main observation in~\cite{hormander67} is that  regularity `along' two vector fields $X$ and $Y$ implies regularity along their commutator $[X,Y]$ and sum $X+Y$, up to certain controllable errors.

H{\"o}rmander's theorem implies that solutions to~\eqref{eq:fpde} on $\T^2 \times \R_+$ are smooth, but it does not evidently imply Theorem~\ref{thm:enhancement}. Rather, we revisit H{\"o}rmander's scheme (i) in a more quantitative manner, (ii) with boundary, and (iii) in the singular limit as $\nu \to 0^+$. On the other hand, our problem admits technical simplifications that, we hope, illuminate the original analysis in~\cite{hormander67}.

Our work is partially inspired by the recent work of Armstrong and Mourrat~\cite{am19}, who applied H{\"o}rmander's methods in the context of the kinetic Fokker--Planck equation. H{\"o}rmander's analysis was also also recently revisited by Bedrossian and coauthors in~\cite{bedrossian2020regularity,bedrossian2020quantitative} in the context of stochastic differential equations.


\subsection*{Sketch}


Let us provide an exposition of the main steps in our proof. Consider a smooth solution $f \: \Omega \times \R_+ \to \R$ to~\eqref{eq:fpde} with periodic or homogeneous Dirichlet or Neumann boundary conditions and smooth initial data $f_{\rm in}$ satisfying $\la f_{\rm in} \ra \equiv 0$. Using the boundary conditions, the basic energy estimate is
\begin{equation}\notag
    \frac{1}{2} \| f(\cdot,t) \|_{L^2}^2 + \nu \int_0^t \int_{\Omega} |\p_y f|^2 \, dx \, dy \, ds = \frac{1}{2} \| f_{\rm in} \|_{L^2}^2, \quad \forall t \geq 0 \, .
\end{equation}
This energy estimate furnishes two crucial pieces of information, which, to avoid more notation, we present here only for Dirichlet boundary conditions. Evidently,
\begin{equation}
    \label{eq:crucialest1}
    \nu \| f \|_{L^2_{t,x} (H^1_0)_y(\Omega \times \R_+)}^2 \les \| f_{\rm in} \|_{L^2}^2 \, .
\end{equation}
Then~\eqref{eq:crucialest1} and the PDE itself $X_0 f := (\p_t + b(y) \p_x) f = \nu \p_{yy} f$ give
\begin{equation}
    \label{eq:crucialest2}
    \nu^{-1} \| X_0 f \|_{L^2_{t,x} H^{-1}_y(\Omega \times \R_+)}^2 \les \| f_{\rm in} \|_{L^2}^2 \, .
\end{equation}
 As in~\cite{am19}, we could define a norm $\| \cdot \|_{H^{1,\nu}_{\rm hyp}}$ based on the quantities controlled in~\eqref{eq:crucialest1} and \eqref{eq:crucialest2}, though it is not strictly necessary.\footnote{Membership in $H^{1,\nu}_{\rm hyp}$ is actually equivalent to solving the PDE with RHS in $L^2_{t,x} H^{-1}_y$.} Hypoellipticity is subsequently extracted from \emph{embedding theorems} for $H^{1,\nu}_{\rm hyp}$.\footnote{As a consequence, after the initial energy estimate, time no longer plays a distinguished role. It is possible to distinguish the role of time in a `parabolic' H{\"o}rmander method; see, for example, Bedrossian and Liss~\cite{bedrossian2020quantitative}.}  Notably, we have
 \begin{proposition}[Subelliptic estimate and Poincar{\'e} inequality, Dirichlet case]
 \label{pro:introsubelliptic}
\begin{equation}
    \label{eq:introsubellipticestimate}
        \nu^\frac{N+1}{N+3} \| f \|_{L^2(\Omega_T)}^2 + \nu^\frac{N+1}{N+3}  \| f   \|_{Q^{1/(N+3)}_{\p_x}(\Omega_T)}^2 \les_b \nu \| f \|_{L^2_{t,x} H^1_{0,y}(\Omega_T)}^2 + \nu^{-1} \| X_0 f \|_{L^2_{t,x} H^{-1}_y(\Omega_T)}^2 \, ,
\end{equation}
where $\Omega_T = \Omega \times (0,T)$, $T = T_0 \nu^{-\frac{N+1}{N+3}}$, $\nu \leq \nu_0$, and $\nu_0, T_0 > 0$ depend only on the shear profile $b$.
\end{proposition}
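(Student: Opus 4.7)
The strategy is to reduce everything to Fourier modes in $x$, localize in $y$ near the critical points of $b$, rescale each critical neighborhood to unit scale, and extract the fractional gain in $x$ from a scale-invariant H\"ormander--Kohn subelliptic estimate.

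First I would Fourier-decompose $f(x,y,t) = \sum_{k \in \Z \setminus \{0\}} \hat f_k(y,t) e^{2\pi i k x}$; the zero mode is excluded by hypothesis. Since $\la f \ra \equiv 0$, Poincar\'e in $x$ gives $\|f\|_{L^2(\Omega_T)}^2 \les \|f\|_{Q^{1/(N+3)}_{\p_x}(\Omega_T)}^2$, so it suffices to bound the fractional seminorm on the left. As the problem is diagonal in $k$, matters reduce to: for every $k \neq 0$,
\[
\nu^{\frac{N+1}{N+3}} |k|^{\frac{2}{N+3}} \|\hat f_k\|_{L^2_{t,y}}^2 \les \nu \|\p_y \hat f_k\|_{L^2_{t,y}}^2 + \nu^{-1} \|X_0 \hat f_k\|_{L^2_t H^{-1}_y}^2.
\]

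Second, at each critical point $y_i$ of order $N_i$ I would introduce the natural length scale
\[ \lambda_i := (\nu / |k|)^{1/(N_i+3)}, \]
obtained by balancing diffusion $\nu/\lambda_i^2$ against transport $|k|\lambda_i^{N_i+1}$, along with the companion timescale $T_i := \lambda_i^2/\nu = \nu^{-(N_i+1)/(N_i+3)}|k|^{-2/(N_i+3)}$. A smooth partition of unity adapted to the cover $\bar D = D_{\rm mono} \cup \bigcup_i\{|y-y_i|<C\lambda_i\}$ (with $|b'| \ges 1$ on $D_{\rm mono}$) splits the estimate into pieces; on $D_{\rm mono}$ the argument reduces to the case $N=0$. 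In each critical neighborhood, after the rescaling $z = (y - y_i)/\lambda_i$, $\tau = t/T_i$, the mode equation becomes a scale-one problem
\[
\p_\tau g + i \tilde b_i(z) g = \p_{zz} g, \qquad \tilde b_i(z) = c_i z^{N_i+1} + O(\lambda_i),
\]
and the prefactor $T_i^{-1}$ is exactly $\nu^{(N_i+1)/(N_i+3)}|k|^{2/(N_i+3)}$, which is precisely what the displayed inequality demands in that neighborhood.

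The core task is then the scale-invariant subelliptic estimate
\[
\|g\|_{L^2(\tau,z)}^2 \les \|\p_z g\|_{L^2}^2 + \|\p_\tau g + i \tilde b_i(z) g\|_{H^{-1}_z}^2,
\]
which I would prove via H\"ormander's iterated commutator argument. Writing $X_1 = \p_z$ and $X_0 = \p_\tau + i\tilde b_i(z)$, the commutators $\ad_{X_1}^m X_0 = i \tilde b_i^{(m)}(z)$ produce a nonvanishing multiplier after exactly $m = N_i + 1$ steps, since $\tilde b_i^{(N_i+1)}(0) \neq 0$. Each commutation is implemented via a fractional Kohn-type duality inequality of the form $\|[X_1, Y]h\|_{H^{-s-1}} \les \|Yh\|_{H^{-s}} + \|X_1 h\|_{L^2}$; composing $N_i + 1$ such steps, starting from $H^{-1}$ control on $X_0 g$, delivers an effective gain of $1/(N_i+3)$ in $z$-fractional regularity, which (by an inverse Poincar\'e applied to $\tilde b_i^{(N_i+1)} g$) translates to $L^2$ control of $g$.

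The main obstacle is the core subelliptic estimate itself: tracking the iterated Kohn inequalities so as to obtain the sharp exponent $1/(N+3)$ without accumulating spurious losses, and controlling boundary contributions when $y_i \in \{0,1\}$ or when the rescaled critical neighborhood meets the boundary of $D$ (this is one reason the Dirichlet and Neumann cases require separate care). A secondary point is the partition-of-unity reassembly: commutator errors $[\chi_j, X_0] = -b(y)\chi_j'\,\p_x$ live on transition shells of width $\sim \lambda_i$ and are absorbed into the right-hand side via a scale-$\lambda_i$ Poincar\'e inequality together with the already controlled term $\nu\|\p_y f\|_{L^2}^2$.
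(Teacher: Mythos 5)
Your route is genuinely different from the paper's: you Fourier-decompose in $x$, localize and rescale near each critical point, and try to prove a scale-one subelliptic estimate by iterated commutators, which is the architecture of the resolvent/hypocoercivity approaches of Bedrossian--Coti Zelati and Wei. The paper instead never leaves physical space: it proves the estimate for general $u$ by combining a finite-difference bracket inequality (built on the identity $e^{-B}e^{-A}e^{B}e^{A}u = u(x+\sigma^{m_k}\nu^{k/2}[b^{(k)}(y+\cdot)-b^{(k)}(y)],y,t)$, Lemma~\ref{lemma:bracket} and Corollary~\ref{cor:improvedbrackets}) with an interpolation inequality controlling $\|u\|_{\X}$ by $\|u\|_{\Y}$ and $\|X_0u\|_{\Y^*}$ via mollification along the bracket directions (Proposition~\ref{lemma:Z0:estimate}), then concludes with Lemma~\ref{lem:comparable} and the fractional Poincar\'e inequality. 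The physical-space formulation is what lets the paper handle the boundary: finite differences are taken one-sided, flowing into the domain on each half $I\in\{(0,\sfrac12),(\sfrac12,1)\}$.

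There are three genuine gaps in your plan. First, the heart of the matter --- the scale-one estimate with the \emph{sharp} exponent $1/(N_i+3)$ --- is asserted rather than proved, and the mechanism you name will not deliver it as stated: Kohn-style iterated duality inequalities are exactly the argument that loses the sharp subelliptic exponent (one gets something like $2^{-(m+1)}$ rather than $1/(m+2)$ for brackets of length $m$), and recovering $1/(N+3)$ is precisely where the paper's careful bookkeeping of the indices $s_k=1/(k+2)$ and the $\varepsilon$-rebalancing in the bracket inequality does the work. Second, your covering of $\bar D$ is inconsistent: if the critical neighborhoods have width $C\lambda_i=C(\nu/|k|)^{1/(N_i+3)}$, their complement is \emph{not} a set where $|b'|\gtrsim 1$; on the intermediate annuli $\lambda_i\lesssim|y-y_i|\lesssim 1$ one only has $|b'(y)|\sim|y-y_i|^{N_i}$, so you need an additional dyadic decomposition in $|y-y_i|$ and a summation of the resulting estimates (this is where logarithmic losses historically crept in). Relatedly, $[\chi_j,X_0]=0$ since $\chi_j$ depends only on $y$; the dangerous localization error is $\nu\|\chi_j'f\|_{L^2}^2\sim T_i^{-1}\|f\|_{L^2(\mathrm{shell})}^2$, which is of the \emph{same order} as the left-hand side you are trying to prove, so "absorbing" it is not free. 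Third, the Dirichlet boundary --- the actual content of this proposition relative to prior work --- is only flagged, not treated: after rescaling, $\p_z$ does not preserve the boundary condition at $z=0$, so the commutator scheme $[\p_z,\cdot]$ breaks down exactly where the new result lives.
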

The quantity $\| \cdot \|_{Q^{1/(N+3)}_{\p_x}}$ is a Besov-type norm measuring $1/(N+3)$-differentiability in the direction $\p_x$, and it controls $\| f \|_{L^2}$ when $\la f \ra \equiv 0$ as above. In the terminology of~\cite{am19},~\eqref{eq:introsubellipticestimate} is a \emph{H{\"o}rmander inequality}. See Theorem~\ref{thm:improvedsubellipticestimate} for the periodic, Dirichlet, and Neumann settings. \\
 
 With~\eqref{eq:introsubellipticestimate} in hand, we conclude the proof of exponential decay according to a Gr{\"o}nwall-type argument along a discrete set of times (see Section~\ref{sec:proofofmaintheorem}). The discrete-in-time proof is natural in the following sense. For the Couette example~\eqref{eq:couetteexactsol}, the differential inequality $\sfrac{d}{dt} \| f(\cdot,t) \|_{L^2}^2 \leq - C_0 \nu^{\sfrac{1}{3}} \| f(\cdot,t) \|_{L^2}^2$ would imply $\| f(\cdot,t) \|_{L^2}^2 \leq e^{-C_0 \nu^{\sfrac{1}{3}} t} \| f_{\rm in} \|_{L^2}^2$, which is false, as we see from the solution formula~\eqref{eq:couetteexactsol}. Indeed, the superlinear decay in the term $e^{-\nu k^2 t^3/3}$ indicates that `nothing happens' until the `crossover time', which is proportional to $\nu^{-\sfrac{1}{3}}$. In light of this, the prefactor $C> 1$ in Theorem~\ref{thm:enhancement} and the scaling of $T_0$ in Proposition~\ref{pro:introsubelliptic} are \emph{essential}.

\subsection*{Sample H{\"o}rmander inequalities}

To convey the main idea of the embedding~\eqref{eq:introsubellipticestimate}, we demonstrate a simple embedding on $\R^2$, which is an analogue of~\eqref{eq:introsubellipticestimate} for the PDE $y\p_x f - \nu \p_{yy} f = g$. To quantify the fractional differentiability, we define the following Besov-type seminorms based on finite differences:
\begin{equation}\notag
    \| f \|_{Q^1_{\p_y}} := \sup_{h\in \R \setminus \{0\}} h^{-1} \| f(x,y+h) - f(x,y) \|_{L^2}
\end{equation}
\begin{equation}\notag
    \| f \|_{Q^{\sfrac{1}{2}}_{y \p_x}} := \sup_{h \in \R \setminus \{0\}} h^{-1} \| f(x+ \sgn(h) h^2 y,y) - f(x,y) \|_{L^2}
\end{equation}
\begin{equation}\notag
    \| f \|_{Q^{\sfrac{1}{3}}_{\p_x}} := \sup_{h \in \R \setminus \{0\}} h^{-1} \| f(x+h^3,y) - f(x,y) \|_{L^2} \, .
\end{equation}
We evidently have $\| f \|_{Q^1_{\p_y}} \les \|  f \|_{L^2_x \dot H^1_y}$ (see the characterization of Sobolev norms by finite differences in Evans \cite{evans}). The quantity $\| f \|_{Q^{\sfrac{1}{2}}_{y \p_x}}$, which measures $\sfrac{1}{2}$ derivative in the $y\p_x$ direction, will be controlled by the `interpolation inequality' below. The analogous estimate for the heat equation is  $f \in L^2_t H^1_x \cap H^1_t H^{-1}_x \subset H^{\sfrac{1}{2}}_t L^2_x$ on $\R^d \times (0,T)$.

In the following, we require that the terms on the right-hand side of the inequalities~\eqref{eq:subellipticestimateintro},~\eqref{eq:bracketinequalityintro} and~\eqref{eq:interpolationinequalityintro} are \emph{well defined and non-vanishing} (which is necessary in the optimization steps). Ultimately, we will demonstrate

\begin{lemma}[Sample subelliptic estimate]\label{lem:sample:sub}
In the above notation,
    \begin{equation}
        \label{eq:subellipticestimateintro}
        \| f \|_{Q^{\sfrac{1}{3}}_{\p_x}} \les \| f \|_{L^2_x \dot H^1_y}^{\sfrac{2}{3}} \| y\p_x f \|_{L^2_x \dot H^{-1}_y}^{\sfrac{1}{3}} \ .
    \end{equation}
\end{lemma}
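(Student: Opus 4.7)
My plan is to implement H{\"o}rmander's commutator idea quantitatively: since $\p_x = [\p_y, y\p_x]$, a finite difference in the $\p_x$ direction should be expressible as a telescoping sum of finite differences along the constituent vector fields, whose flows I denote by $S_s(x,y) = (x, y+s)$ (for $\p_y$) and $\Phi_t(x,y) = (x+ty, y)$ (for $y\p_x$). A direct computation gives the commutator identity
\[ \Phi_{-t} \circ S_{-s} \circ \Phi_t \circ S_s(x,y) = (x + st, y), \]
so, fixing $h \neq 0$ and choosing $s, t > 0$ with $st = h^3$, the composition on the left realizes the shift $x \mapsto x + h^3$.

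Setting $f_0 = f$, $f_1 = f \circ S_s$, $f_2 = f \circ \Phi_t \circ S_s$, $f_3 = f \circ S_{-s} \circ \Phi_t \circ S_s$, and $f_4 = f \circ \Phi_{-t} \circ S_{-s} \circ \Phi_t \circ S_s$, the telescoping identity
\[
f(x + h^3, y) - f(x,y) = \sum_{j=1}^{4}(f_j - f_{j-1})
\]
splits the finite difference into two kinds of pieces. Two of them reduce, after an $L^2$-isometric change of variables in $x$, to the pure $y$-shift $f(x, y+s) - f(x,y)$; each is thus controlled by $|s| \norm{f}_{L^2_x \dot H^1_y}$ via the finite-difference characterization of Sobolev norms. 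The other two, after an $L^2$-isometric change of variables in $x$ or $y$, reduce to the transport shift $f(x+ty, y) - f(x,y)$, and by the definition of $Q^{\sfrac{1}{2}}_{y\p_x}$ each is at most $t^{\sfrac{1}{2}} \norm{f}_{Q^{\sfrac{1}{2}}_{y\p_x}}$. Dividing by $h = (st)^{\sfrac{1}{3}}$ yields
\[ h^{-1} \norm{f(x + h^3, y) - f(x,y)}_{L^2} \les \tfrac{s}{h} \norm{f}_{L^2_x \dot H^1_y} + \tfrac{t^{\sfrac{1}{2}}}{h} \norm{f}_{Q^{\sfrac{1}{2}}_{y\p_x}}. \]
Parametrizing by $s$ via $t = h^3/s$ and optimizing over $s > 0$ (which I may do by the non-vanishing hypothesis on the right-hand sides), the minimum is attained at $s \sim h \bigl( \norm{f}_{Q^{\sfrac{1}{2}}_{y\p_x}} / \norm{f}_{L^2_x \dot H^1_y} \bigr)^{\sfrac{2}{3}}$ and produces the bracket inequality
\[ \norm{f}_{Q^{\sfrac{1}{3}}_{\p_x}} \les \norm{f}_{L^2_x \dot H^1_y}^{\sfrac{1}{3}} \norm{f}_{Q^{\sfrac{1}{2}}_{y\p_x}}^{\sfrac{2}{3}}. \]

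To conclude, I would invoke the interpolation inequality $\norm{f}_{Q^{\sfrac{1}{2}}_{y\p_x}} \les \norm{f}_{L^2_x \dot H^1_y}^{\sfrac{1}{2}} \norm{y\p_x f}_{L^2_x \dot H^{-1}_y}^{\sfrac{1}{2}}$, the analogue of $L^2_t H^1_x \cap H^1_t H^{-1}_x \into H^{\sfrac{1}{2}}_t L^2_x$ with the transport $y\p_x$ in the role of $\p_t$, which the excerpt indicates will be established separately. Substituting gives
\[ \norm{f}_{Q^{\sfrac{1}{3}}_{\p_x}} \les \norm{f}_{L^2_x \dot H^1_y}^{\sfrac{1}{3}} \cdot \norm{f}_{L^2_x \dot H^1_y}^{\sfrac{1}{3}} \norm{y\p_x f}_{L^2_x \dot H^{-1}_y}^{\sfrac{1}{3}} = \norm{f}_{L^2_x \dot H^1_y}^{\sfrac{2}{3}} \norm{y\p_x f}_{L^2_x \dot H^{-1}_y}^{\sfrac{1}{3}}, \]
as claimed.

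The telescoping and the optimization are essentially algebraic, so the main obstacle is not in this lemma itself but in the interpolation inequality, which is where the PDE enters and where the $\dot H^{-1}_y$ control on $y\p_x f$ gets upgraded to $\sfrac{1}{2}$ of a derivative along the $y\p_x$ direction. The asymmetry between the exponents $\sfrac{2}{3}$ and $\sfrac{1}{3}$ on the right-hand side of the lemma reflects exactly this loss: measuring the transport in the weaker norm $\dot H^{-1}_y$ rather than $L^2$ costs half a derivative from the $L^2_x \dot H^1_y$ budget en route.
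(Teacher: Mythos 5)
Your proposal is correct and follows essentially the same route as the paper: the same four-term telescoping along the flows of $\p_y$ and $y\p_x$ (whose commutator realizes the shift $x\mapsto x+st$) yields the bracket inequality, which is then combined with the separately proven interpolation inequality $\| f \|_{Q^{\sfrac{1}{2}}_{y \p_x}} \les \| f \|_{L^2_x \dot H^1_y}^{\sfrac{1}{2}} \| y \p_x f \|_{L^2_x \dot H^{-1}_y}^{\sfrac{1}{2}}$ exactly as in the text. The only cosmetic difference is that you optimize directly over the splitting $st=h^3$, whereas the paper fixes $s=h$, $t=h^2$, proves the additive inequality, and then recovers the multiplicative form by rescaling $f_\lambda(x,y)=f(\lambda x,y)$ — the two optimizations are equivalent.
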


The explains, in part, the emergence of the powers of $\nu$ in the main theorem. If we suppose $\| f \|_{L^2_x \dot H^1_y} \les \nu^{-\sfrac{1}{2}}$ and $\| f \|_{L^2_x \dot H^{-1}_y} \les \nu^{\sfrac{1}{2}}$, then we recover $\| f \|_{Q^{\sfrac{1}{3}}_{\p_x}} \les \nu^{-\sfrac{1}{6}}$, analogous to the time-dependent subelliptic estimate~\eqref{eq:introsubellipticestimate} with $N=0$. If we replace $y\p_x f$ by $y^{N+1} \p_x f$ in~\eqref{eq:subellipticestimateintro}, then we would expect
\begin{equation}
        \| f \|_{Q^{1/(N+1)}_{\p_x}} \les \| f \|_{L^2_x \dot H^1_y}^{\frac{N+2}{N+3}} \| y^{N+1} \p_x f \|_{L^2_x \dot H^{-1}_y}^{\frac{1}{N+3}} \, ,
    \end{equation}
though we do not demonstrate this here (see Farkas and Lunardi~\cite{lunardi2006} for similar estimates).

The subelliptic estimate~\eqref{eq:subellipticestimateintro} follows from combining the `bracket inequality' and the `interpolation inequality'. The bracket inequality is based on the identity $e^{-Bt}e^{-At}e^{Bt}e^{At} = e^{t^2 [A,B] + O(t^3)}$ for vector fields $A$, $B$. In the proof, this inequality manifests in~\eqref{eq:basicbracketcomputation}, where the $O(t^3)$ term vanishes due to the special structure of $\p_y$ and $y \p_x$.


\begin{lemma}[Sample bracket inequality]
In the above notation,
    \begin{equation}
        \label{eq:bracketinequalityintro}
    \| f \|_{Q^{\sfrac{1}{3}}_{\p_x}} \les \| f \|_{Q^1_{\p_y}}^{\sfrac{1}{3}} \| f \|_{Q^{\sfrac{1}{2}}_{y \p_x}}^{\sfrac{2}{3}} \, .
    \end{equation}
\end{lemma}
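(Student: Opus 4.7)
My plan is to build the proof around the exact commutator identity for the vector fields $A=\p_y$ and $B=y\p_x$. Because $[A,B]=[\p_y,y\p_x]=\p_x$ and every higher iterated bracket vanishes (both $[A,\p_x]$ and $[B,\p_x]$ are zero), the Baker--Campbell--Hausdorff-type expansion terminates and gives the exact identity
\[
e^{-ty\p_x}\circ e^{-t\p_y}\circ e^{ty\p_x}\circ e^{t\p_y}=e^{t^{2}\p_x}
\]
for every $t\in\R$. I would verify this directly by composing the four flows, tracking the image of $(x,y)$ through the intermediate points $P_0=(x,y)$, $P_1=(x,y+t)$, $P_2=(x+t(y+t),y+t)$, $P_3=(x+t(y+t),y)$, and $P_4=(x+t^{2},y)$.

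Next I would telescope
\[
f(x+t^{2},y)-f(x,y)=\sum_{i=0}^{3}\bigl(f(P_{i+1})-f(P_{i})\bigr)
\]
and take $L^2(\R^2)$ norms on both sides. Set $A=\|f\|_{Q^{1}_{\p_y}}$ and $B=\|f\|_{Q^{\sfrac{1}{2}}_{y\p_x}}$. Each of the four increments reduces, after an $L^{2}$-isometric affine change of variables, to a pure finite difference along $\p_y$ or $y\p_x$. The two $\p_y$ increments ($P_1-P_0$ and $P_3-P_2$) have displacement $\pm t$ and contribute at most $|t|\,A$ each. The two $y\p_x$ increments ($P_2-P_1$ and $P_4-P_3$) become, in the coordinates $(u,v)=(x,y+t)$ and $(u,v)=(x+t^{2},y)$ respectively, the single shift $f(u+tv,v)-f(u,v)$, which matches the $Q^{\sfrac{1}{2}}_{y\p_x}$ scaling with parameter $h=\sqrt{|t|}$ and is therefore bounded by $\sqrt{|t|}\,B$. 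Summing via the triangle inequality yields the basic bracket computation
\[
\|f(x+t^{2},y)-f(x,y)\|_{L^{2}}\le 2|t|\,A+2\sqrt{|t|}\,B,\qquad t\in\R.
\]

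To conclude, for an arbitrary $h\neq 0$ I would specialize to $t=|h|^{3/2}$, so that $t^{2}=|h|^{3}$, and divide by $|h|$ to obtain
\[
|h|^{-1}\|f(x+h^{3},y)-f(x,y)\|_{L^{2}}\le 2|h|^{1/2}A+2|h|^{-1/4}B.
\]
Balancing the two terms on the right at $|h|^{3/4}=B/A$---this is the ``optimization step,'' legitimate since both $A$ and $B$ are finite and nonzero---gives the common value $2A^{1/3}B^{2/3}$ for each, producing $\|f\|_{Q^{\sfrac{1}{3}}_{\p_x}}\les A^{1/3}B^{2/3}$.

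The main technical hurdle I anticipate is the bookkeeping in the telescoping step: one must identify, for each of the four increments, the explicit translation in $(x,y)$ that converts it to a pure $\p_y$ or $y\p_x$ finite difference at the correct scale. The decisive algebraic observation is that the combination $t(y+t)=ty+t^{2}$ appearing at $P_2$ is exactly what becomes $tv$ after relabeling $v=y+t$---precisely the $y\p_x$-displacement at the new height $v$, with $h^{2}=t$. Once this matching is in place, the remaining steps are BCH telescoping and an exponent balance via Young's inequality.
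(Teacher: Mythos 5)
Your commutator identity, the four-point telescoping, and the bounds $2|t|\,A$ and $2\sqrt{|t|}\,B$ for the individual increments are all correct, and up to that point you are following essentially the paper's computation \eqref{eq:basicbracketcomputation}. The gap is in the final step. After setting $t=|h|^{3/2}$ you arrive at
\begin{equation*}
|h|^{-1}\bigl\| f(x+h^{3},y)-f(x,y)\bigr\|_{L^{2}}\le 2|h|^{1/2}A+2|h|^{-1/4}B ,
\end{equation*}
and you then ``balance'' the right-hand side by choosing $|h|^{3/4}=B/A$. But $h$ is not a free parameter here: the seminorm $\|f\|_{Q^{\sfrac{1}{3}}_{\p_x}}$ is the \emph{supremum} over all $h\neq 0$ of the left-hand side, whereas your right-hand side depends on $h$ and blows up both as $h\to 0$ (through $|h|^{-1/4}B$) and as $h\to\infty$ (through $|h|^{1/2}A$). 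Choosing the particular $h$ at which the two terms agree only controls the difference quotient at that one displacement; it says nothing about the supremum. This is not the same kind of optimization as the legitimate ones in the paper, where the tunable parameter ($\lambda$ or $\ell$) appears only on the right-hand side.

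The source of the problem is your use of the \emph{symmetric} commutator $e^{-tB}e^{-tA}e^{tB}e^{tA}=e^{t^{2}\p_x}$, which leaves no freedom once the net displacement $t^{2}=h^{3}$ is prescribed. Two repairs are available. (i) The paper's route: use the asymmetric identity with $\p_y$-time $h$ and $y\p_x$-time $h^{2}$ (net displacement $h\cdot h^{2}=h^{3}$), so that every increment is bounded by $|h|\,A$ or $|h|\,B$; dividing by $|h|$ gives the additive bound $\|f\|_{Q^{\sfrac{1}{3}}_{\p_x}}\les A+B$ \emph{uniformly in $h$}, and the multiplicative form then follows by applying this to $f_\lambda(x,y)=f(\lambda x,y)$ and optimizing $\lambda$ (which lives only on the right-hand side). (ii) Alternatively, keep two independent times $\tau$ (for $\p_y$) and $\sigma$ (for $y\p_x$) subject to $\tau\sigma=h^{3}$; the telescoping gives $2\tau A+2\sigma^{1/2}B$, and minimizing over this one-parameter family \emph{for each fixed $h$} (take $\tau=h(B/A)^{2/3}$) yields $\les |h|\,A^{1/3}B^{2/3}$ uniformly in $h$, which gives the lemma directly without rescaling.
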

\begin{proof}
Let $h \in \R$. If we demonstrate
\begin{equation}
    \label{eq:intermediarystepandwhatnot}
    \| f \|_{Q^{\sfrac{1}{3}}_{\p_x}} \les \| f \|_{Q^1_{\p_y}} + \| f \|_{Q^{\sfrac{1}{2}}_{y \p_x}} \, ,
\end{equation}
then we consider $f_\lambda(x,y) = f(\lambda x,y)$ and optimize $\lambda > 0$ to complete the proof.
We write
\begin{equation}
    \label{eq:basicbracketcomputation}
\begin{aligned}
 f(x+h^3,y) - f(x,y) &= f(x+h^3,y) - f(x+(y+h)h^2,y) \\
 &+ f(x+(y+h)h^2,y) - f(x+(y+h)h^2,y+h) \\
 &+ f(x+(y+h)h^2,y+h) - f(x,y+h) \\
 &+ f(x,y+h) - f(x,y) \, .
\end{aligned}
\end{equation}
First, we treat the terms based on `flowing' in the direction $\mp \p_y$:
\begin{equation}\notag
    \| f(x+(y+h)h^2,y) - f(x+(y+h)h^2,y+h) \|_{L^2} = \| f(x,y+h) - f(x,y) \| \les h \| f \|_{Q^1_{\p_y}} \, .
\end{equation}
Second, we treat the terms based on `flowing' in the direction $\mp y \p_x$:
\begin{equation}\notag
    \| f(x+h^3,y) - f(x+(y+h)h^2,y) \|_{L^2} = \| f(x+(y+h)h^2,y+h) - f(x,y+h) \|_{L^2} \les h \| f \|_{Q^{\sfrac{1}{2}}} \, .
\end{equation}
Combining the estimates completes the proof of~\eqref{eq:intermediarystepandwhatnot}.
\end{proof}

The interpolation inequality is more difficult (see Section~5 in H{\"o}rmander's paper~\cite{hormander67}), and we delay the proof to Appendix~\ref{app:interpolationproof}:

\begin{lemma}[Example interpolation inequality]
In the above notation,
    \begin{equation}
    \label{eq:interpolationinequalityintro}
    \| f \|_{Q^{\sfrac{1}{2}}_{y \p_x}} \les \| f \|_{L^2_x \dot H^1_y}^{\sfrac{1}{2}} \| y \p_x f \|_{L^2_x \dot H^{-1}_y}^{\sfrac{1}{2}} \, .
    \end{equation}
\end{lemma}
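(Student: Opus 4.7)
The plan is to take the partial Fourier transform in $x$, reducing \eqref{eq:interpolationinequalityintro} to a family of one-dimensional estimates indexed by the $x$-frequency $\xi$, and then to prove each 1D estimate by a two-regime analysis. Writing $\tilde f(\xi, y) := \int f(x,y) e^{-i\xi x}\, dx$ and $\tau := \sgn(h) h^2$, Plancherel in $x$ yields
\[
\|f(x+\tau y, y) - f(x, y)\|_{L^2(\R^2)}^2 = \int_\R \|(e^{i\tau\xi y}-1)\tilde f(\xi,\cdot)\|_{L^2_y}^2\, d\xi,
\]
while the right-hand side of \eqref{eq:interpolationinequalityintro} decomposes as $\|f\|_{L^2_x \dot{H}^1_y}^2 = \int A(\xi)\, d\xi$ and $\|y\p_x f\|_{L^2_x \dot{H}^{-1}_y}^2 = \int \xi^2 B(\xi)\, d\xi$ with $A(\xi) := \|\p_y \tilde f(\xi,\cdot)\|_{L^2_y}^2$ and $B(\xi) := \|y \tilde f(\xi,\cdot)\|_{\dot{H}^{-1}_y}^2$. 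After dividing by $|\tau|$, Cauchy--Schwarz in $\xi$ reduces the problem to the pointwise-in-$\xi$ one-dimensional estimate
\[
\|(e^{i\sigma y}-1)u\|_{L^2_y}^2 \les |\sigma|\,\|\p_y u\|_{L^2_y}\,\|y u\|_{\dot{H}^{-1}_y}, \qquad \sigma \in \R,
\]
for $u = u(y)$, to be applied with $\sigma = \tau\xi$ and $u = \tilde f(\xi, \cdot)$.

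For this 1D estimate, write $A := \|\p_y u\|_{L^2}^2$, $B := \|yu\|_{\dot{H}^{-1}_y}^2$, pass to the $y$-Fourier variable via $G := \hat u$, and note that the left-hand side equals $\int|G(\eta-\sigma) - G(\eta)|^2\, d\eta$, with $A = \int\eta^2|G|^2\, d\eta$ and $B = \int|G'(\eta)|^2/\eta^2\, d\eta$. I split into two regimes at the scale $s_* := (A/B)^{1/6}$. In the small-shift regime $|\sigma| \les s_*$, I choose a cutoff $R$ with $R \gtrsim |\sigma|$ and $R^2 \sim A^{1/2}/(|\sigma|B^{1/2})$. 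On $\{|\eta|, |\eta-\sigma| \leq R\}$, I combine the identity $G(\eta-\sigma) - G(\eta) = -\int_{\eta-\sigma}^\eta G'(\beta)\, d\beta$ with Cauchy--Schwarz and Fubini, using $|\beta| \leq R$ along the path to absorb a weight $1 \leq R^2/\beta^2$ and obtain a contribution of order $|\sigma|^2 R^2 B$. On the complement, the condition $R \gtrsim |\sigma|$ together with $|G|^2 \leq R^{-2}\eta^2|G|^2$ for $|\eta| > R$ produces a contribution of order $R^{-2} A$. These two balance at the chosen $R$ to yield $|\sigma|\sqrt{AB}$.

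In the large-shift regime $|\sigma| \gtrsim s_*$, I combine the trivial bound $\|(e^{i\sigma y}-1)u\|_{L^2}^2 \leq 4\|u\|_{L^2}^2$ with the Gagliardo--Nirenberg-type inequality $\|u\|_{L^2_y}^2 \les A^{2/3} B^{1/3}$; the large-shift condition implies $A^{2/3}B^{1/3} \leq |\sigma|\sqrt{AB}$, completing the estimate. I prove the Gagliardo--Nirenberg inequality directly. The identity $G(\eta) - G(0) = \int_0^\eta G'(\beta)\, d\beta$ combined with Cauchy--Schwarz and the bound $\int_0^\eta|G'|^2\, d\beta \leq \eta^2 B$ (valid since $\beta^2 \leq \eta^2$ along the path) yields the pointwise estimate $|G(\eta)|^2 \leq 2|G(0)|^2 + 2|\eta|^3 B$. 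Integrating over $\{|\eta| \leq r\}$ and combining with $\int_{|\eta|>r}|G|^2 \leq r^{-2}A$ gives $\int|G|^2 \leq 4r|G(0)|^2 + r^4 B + r^{-2}A$. Averaging the pointwise bound over $\eta \in (-r', r')$ gives $|G(0)|^2 \leq (1/r')\int|G|^2 + r'^3 B/2$, so setting $r$ a small multiple of $r'$ and using $\int|G|^2 = 2\pi\|u\|_{L^2}^2$ from Plancherel lets me absorb the $\int|G|^2$ into the left-hand side; optimizing $r' \sim (A/B)^{1/6}$ then produces the claim.

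The main technical obstacle is the low-frequency singularity of the weight $1/\eta^2$ in $B$ at $\eta = 0$, which both necessitates the two-regime bifurcation and forces the specific route via the Gagliardo--Nirenberg inequality rather than the more naive $\|u\|_{L^2}^2 \leq \|u\|_{\dot{H}^{-1}_y}\|\p_y u\|_{L^2}$; the latter implicitly requires a mean-zero condition on $u$ that is not available, whereas my argument is insensitive to the value of $G(0) = \hat u(0)$. This sort of careful singularity handling is the principal feature of Section~5 in H{\"o}rmander's paper~\cite{hormander67}.
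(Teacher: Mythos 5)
Your proof is correct, and it takes a genuinely different route from the paper's. The paper follows H\"ormander's physical-space scheme: it mollifies $f$ in $x$ at scale $\ell^3$, differentiates $\| e^{h^2 y\p_x} S_\ell f - S_\ell f\|_{L^2}^2$ in the shift parameter, estimates by $\dot H^1_y/\dot H^{-1}_y$ duality, commutes $\p_y$ past the flow $e^{h^2 y\p_x}$ (absorbing the dangerous $\p_x$ term via the mollifier at the cost of a $Q^{\sfrac{1}{3}}_{\p_x}$ norm), and then closes by invoking the \emph{bracket} inequality plus Young --- so the paper's sample interpolation and bracket inequalities are interdependent. You instead work entirely on the Fourier side: conjugating by the partial Fourier transform in $x$ reduces the claim, via Cauchy--Schwarz in $\xi$, to a one-dimensional estimate $\|(e^{i\sigma y}-1)u\|_{L^2_y}^2 \les |\sigma|\,\|\p_y u\|_{L^2}\|yu\|_{\dot H^{-1}}$, which you prove by a two-regime split at $|\sigma|\sim (A/B)^{1/6}$; I checked that the cutoff $R^2\sim A^{\sfrac12}/(|\sigma|B^{\sfrac12})$ is compatible with $R\gtrsim|\sigma|$ exactly in the small-shift regime, that the Fubini/weight-insertion step on $\{|\eta|,|\eta-\sigma|\le R\}$ and the tail bound both balance to $|\sigma|\sqrt{AB}$, and that your Gagliardo--Nirenberg lemma $\|u\|_{L^2}^2\les A^{\sfrac23}B^{\sfrac13}$ (with the correct scaling and with $|G(0)|$ handled by averaging rather than by a mean-zero assumption) closes the large-shift regime. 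Your argument buys self-containedness --- no mollification, no bracket inequality, no $Q^{\sfrac13}_{\p_x}$ norm --- and a transparent identification of where the exponents come from. What it gives up is robustness: it leans on Plancherel and on the exact linear structure of $y\p_x$, so unlike the paper's mollification argument it does not serve as a template for the interpolation inequality actually needed in the body of the paper (Proposition~\ref{lemma:Z0:estimate}), which must handle general profiles $b(y)$, boundaries in $y$ and $t$, and the full hierarchy of commutators. One cosmetic point: the obstruction to the ``naive'' route is not a mean-zero condition on $u$ but the fact that the energy controls $\|yu\|_{\dot H^{-1}}$ rather than $\|u\|_{\dot H^{-1}}$; your treatment of $G(0)$ addresses the real issue regardless.
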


The main difficulty in~\eqref{eq:interpolationinequalityintro} can be seen in the following way. Let $X_0 = y\p_x$ and $e^{h^2 X_0} f = f(x+h^2y,y)$. To achieve $\| e^{h^2 X_0} f - f \|_2^2 \les h^2$, one differentiates the left-hand side in $h$. Then
\begin{equation*}
\begin{aligned}
	\frac{d}{dh} \| e^{h^2 X_0} f - f \|_2^2  = 4 h \langle e^{h^2 X_0} f - f, X_0 f \rangle \, ,
	\end{aligned}
\end{equation*}
which is estimated by duality: $X_0 f \in L^2_x \dot H^{-1}_y$ and $e^{h^2 X_0} f - f \in L^2_x \dot H^1_y$. However, $\| \p_y f \|_{L^2}$ is not generally preserved under the flow of $X_0$, as we must commute $\p_y$ with $e^{h^2 X_0}$. The chain rule gives $\p_y e^{h^2 X_0} f = h^2 \p_x e^{h^2 X_0} f + e^{h^2 X_0} \p_y f$, and the situation looks bleak because $\p_x f$ is uncontrolled in $L^2$. H{\"o}rmander's method involves the above computation for a smooth function $S_\ell f$, obtained by mollifying $f$ in the direction $[Y,X_0] = \p_x$ at scale $\ell > 0$. The additional mollification  makes it possible to exploit the small prefactor $h^2$. The complete proof is contained in Appendix~\ref{app:interpolationproof}.

\subsection*{Further difficulties}
Foremost is the Taylor remainder term, which vanished in the calculation~\eqref{eq:basicbracketcomputation}. In our setting, the left-hand side of the analogous bracket calculation contains $f(x + (b(y+h) - b(y))h^2,y)$ rather than $f(x + b'(y) h^3, y)$. This error is estimated in terms of further commutators. That is, to control $\sfrac{1}{3}$ derivative in the direction $b'(y) \p_x$, we already require control of $\sfrac{1}{4}$ derivative in the direction $b''(y)\p_x$, $\sfrac{1}{5}$ derivative in the direction $b^{(3)} \p_x$, etc. Eventually, these errors can be absorbed by applying the trick $[Y, X_0] = [\varepsilon^{-1} Y, \varepsilon X_0 ]$ to introduce small constants. 

The interpolation inequality also exhibits a `cascading effect' and requires mollification along the vector fields $[Y, X_0]$, $[Y, [Y, X_0]]$, etc. Finally, the boundaries in $y$ when $D=(0,1)$ and in the time $t$ restrict the `admissible finite differences' $h \in \R \setminus \{ 0 \}$. One requires $|h| \leq h_0$ with judicious $\nu$-dependent choices of $h_0$. We emphasize that the more tedious calculations present in this paper essentially result from either Taylor remainders, interpolation, or admissible finite differences.  The scaling with respect to $\nu$ which produces the enhanced dissipation timescale is already visible in the discussion following Lemma~\ref{lem:sample:sub} and is unrelated to these technicalities.

\subsection*{Comparison with hypocoercivity}
A commonly used method to prove enhanced dissipation~\cite{jacobmicheleshear} is \emph{hypocoercivity}, whose mathematical framework was developed by Villani in~\cite{villanihypocoercivity}. Hypocoercivity has roots in kinetic theory\footnote{See also the weighted energy method of Guo in~\cite{guolandau}.} and was later adapted to PDEs of fluid dynamics in~\cite{gallaghergallaynier,beckwaynebar}. Its premise is to design a new energy functional $\Phi$ which satisfies the differential inequality $\dot \Phi \leq - C_0 \Phi$. Commonly, $\Phi$ is equivalent to $\| \cdot \|_{H^1_{x,y}}^2$ and is `augmented' by certain commutators; for example,
\begin{equation}\notag
    \Phi[f] := \| f \|_2^2 + a \nu^{2/3} \| \p_y f \|_2^2  + 2b\nu^{1/3} \langle \p_x f, \p_y f \rangle + c\| \p_x f \|_2^2
\end{equation}
in the Couette case with suitable weights $0 < c \ll b \ll a \ll 1$. We should interpret $\p_x$ as the commutator $[\p_y, y \p_x]$ above. Perhaps the main advantage of hypocoercivity is that it is elementary, consisting only of well-chosen energy estimates and integration by parts. 

Hypocoercivity naturally lives at higher regularity $\approx L^\infty_t H^1_{x,y}$ than H{\"o}rmander's method, which lives at the level of the basic energy estimate $\approx L^\infty_t L^2_{x,y} \cap L^2_{t,x} H^1_y$. As a consequence, hypocoercive methods encounter two technical difficulties that are absent from our approach:
\begin{itemize}[leftmargin=*]
    \item $L^2$ stability estimates require further arguments that exploit the smoothing. In~\cite{gallaghergallaynier,jacobmicheleshear}, this is addressed by passing through the resolvent formalism: The hypocoercivity method estimates the pseudospectrum and, therefore, the semigroup.  It is this step which contributes the logarithmic correction, which can be removed by the Gearhardt--Pr{\"u}ss-type theorem of Wei in~\cite{dongyiweiresolvent}.\footnote{Notice, however, that resolvent-based methods may not generalize easily to unsteady flows. Our approach can be easily adapted to shear flows that vary on the timescale $O(\nu^{-(N+1)/(N+3)})$.} In the monotone shear case, one can use the smoothing more directly~\cite{stablemixingestimates}. A different choice, popular in nonlinear problems, is simply to consider stability in more regular spaces~\cite{beckwaynebar}.
    \item Hypocoercivity may not always generalize well to boundaries, which is why~\cite{jacobmicheleshear} does not achieve the optimal timescales in the Neumann case.
\end{itemize}

Finally, we mention that hypocoercivity's main purpose is not to capture hypoelliptic regularization, although in the shear flow case it can be recovered by studying the equation mode-by-mode in $x$; see (2.6) in~\cite{jacobmicheleshear} for a definition of the corresponding functional $\Phi_k$. In our method, the enhanced dissipation is essentially a \emph{consequence} of the hypoelliptic regularization, in the sense that the subelliptic estimate \eqref{eq:introsubellipticestimate} controls a norm measuring fractional regularity in $x$ using the energy estimates \eqref{eq:crucialest1} and \eqref{eq:crucialest2} and leads directly to the timescale \eqref{eq:thetimescale} by a Poincar{\'e} inequality in $x$.

\subsection*{Existing literature} \emph{1. Enhanced dissipation}. 
 In~\cite{constantinkiselevryzhikzlatos}, Constantin, Kiselev, Ryzhik, and Zlato{\v s} investigated enhanced dissipation in broad generality in terms of spectral properties of the operator $\boldsymbol{u} \cdot \nabla$. This work was recently revisited and quantified by Coti Zelati, Delgadino, and Elgindi in~\cite{ontherelationshipcpam} and Wei in~\cite{dongyiweiresolvent}. Shear flows are not relaxation enhancing in the sense of~\cite{constantinkiselevryzhikzlatos}.

Shear flows are steady solutions of the two-dimensional Euler equations and (forced) Navier-Stokes equations. The corresponding linearized equations are
\begin{equation}
    \label{eq:linearizedns}
    \p_t \omega + b(y) \p_x \omega - b''(y) \p_x \psi= \nu \Delta \omega, \quad \Delta \psi = \omega,
\end{equation}
where $\omega$ is the vorticity perturbation. Compared to~\eqref{eq:fpde1},~\eqref{eq:linearizedns} contains the additional non-local term $- b''(y) \p_x \psi$ which complicates the analysis. This term vanishes for the Couette flow $b(y) = y$, and we can extract the inviscid damping and enhanced dissipation directly from the solution formula in~\eqref{eq:couetteexactsol}. Notable papers on nonlinear stability of the Couette flow include~\cite{bedrossianmasmoudiinviscid,bmv1,bedrossianvicolwang,ionescujia}. Other important flows are the Poiseuille (pipe/channel) flow $b(y) = y(1-y)$~\cite{michelepoiseuille}, Kolmogorov (sinusoidal) flow~\cite{beckwaynebar,kolmogorovhypocoercivitywei,kolmogorovwei}, and Oseen vortices~\cite{gallaghergallaynier,wendeng1,wendeng2,weioseen,gallayaxisymmetrization}.


When considering passive scalars in an infinite pipe, one refers to  \emph{Taylor--Aris dispersion}~\cite{taylor1954dispersion,aris1956dispersion}. This topic was recently reexamined via center manifolds and hypocoercivity  in~\cite{beckchaudharywayne}.

Passive scalars in power law circular flows were considered by Dolce and Coti Zelati in~\cite{michelesquared}.

Passive scalars in shear flows with non-local diffusion were recently investigated by He~\cite{siminghefractional} by resolvent methods.

\emph{2. Hypoellipticity}. H{\"o}rmander's theorem was reproved using pseudodifferential operators by Kohn in~\cite{Kohnsproof}, and it is Kohn's proof which H{\"o}rmander includes in his monograph~\cite{hormandervol3}, see Chapter~22. We mention also the stochastic proof reviewed in~\cite{Hairermalliavin}. Otherwise, the literature is immense, and we refer to~\cite{bramanti} for a survey on H{\"o}rmander's vector fields and the books~\cite{helffernier,lernerbook} on microlocal analysis. 

\emph{3. Kinetic equations and regularity techniques}.  Recent work on kinetic equations has illuminated connections between H\"ormander's hypoellipticity and regularity techniques in the style of De Giorgi--Nash--Moser.  In~\cite{silvestre1}, Silvestre derived \emph{a priori} H\"older estimates for the Boltzmann equation, and follow-up work of Silvestre and Imbert~\cite{imbertsilvestre1} obtained weak Harnack and H\"older estimates for a large class of integro-differential equations, including the Boltzmann equation without cutoff. Golse, Imbert, Mouhot, and Vasseur~\cite{gimv} studied the Harnack inequality for kinetic Fokker-Planck and Landau equations. For open questions and a presentation of these and related programs of research, we refer to Mouhot~\cite{mouhot1}. 

\subsection*{Acknowledgements}
DA was supported by the NSF Postdoctoral Fellowship Grant No. 2002023. RB was supported by the NSF Graduate Fellowship Grant No. 1839302. 
MN was supported by the National Science Foundation under Grant No. DMS-1928930 while participating in a program hosted by the Mathematical Sciences Research Institute in Berkeley, California, during the spring 2021 semester. The authors also thank Vlad Vicol, Scott Armstrong, Jean--Christophe Mourrat, and Jacob Bedrossian for valuable discussions.

\section{Preliminaries}



Following the convention specified in the introduction, the torus $\T$ may be regarded as the set $(0,1)$ with periodic boundary conditions. Furthermore, unless specified otherwise, functions $f$ and $u$ are \emph{qualitatively smooth}, though they are only measured in low regularity.

\begin{definition}[Flow Maps and Pullbacks]\label{def:flow:maps}
Let $E\subseteq D$ be open and $(t_1,t_2)$ be given. Let $X$ be a Lipschitz vector field defined on $\T\times E\times (t_1,t_2)$. For  $\tilde E\subseteq E$ open, $(\tilde t_1,\tilde t_2)\subseteq(t_1,t_2)$, and $\sigma \in (\sigma_1,\sigma_2)$, consider the function
\begin{equation}\notag
\phi(\sigma,x,y,t)\:(\sigma_1,\sigma_2)\times \T\times \tilde E\times (\tilde t_1,\tilde t_2) \to \T \times E \times (t_1,t_2) \, 
\end{equation}
defined as the solution to the ordinary differential equation\footnote{In making this definition, we have implicitly assumed that the flow remains contained in $\T\times E\times(t_1,t_2)$, a condition which will be satisfied for every vector field considered in this paper.}
\begin{equation}\notag
\partial_\sigma \phi = X(\phi), \qquad \phi(0,x,y,t) = (x,y,t) \, .
\end{equation}
Then for each $\sigma\in(\sigma_1,\sigma_2)$ and any measurable function $u:\Omega\times(0,1)\rightarrow \R$, we may define the action of the associated pull back $\exp(\sigma X)$ on $u$ by 
\begin{equation}\notag
\exp (\sigma X) u (x,y,t) = u\left(\exp(\sigma X) (x,y,t) \right) = u\left(\phi(\sigma,x,y,t)\right) \, .
\end{equation}
\end{definition}

Throughout the paper, we shall use the following notation and shorthand for particular vector fields.

\begin{definition}[Vector Fields]\label{def:vector:fields}
We fix notation for the following vector fields.  $A$ and $B$ are allowed to be arbitrary Lipschitz vector fields as defined in Definition~\ref{def:flow:maps}, and $b\in C^\infty(D)$ satisfies \eqref{eq:crit:points}.
\begin{subequations}
\begin{align}
    [A,B] &=  (\ad A) B = AB - BA \label{eq:comm:defn}\\
    Y &= \nu^{\sfrac{1}{2}} \p_y \\
    X_0 &= \p_t + b(y) \p_x \\
    Z_1 &= [Y, X_0] = \nu^{\sfrac{1}{2}} b'(y) \p_x  \label{eq:z:k}\\
    Z_{k} &= [Y, Z_{k-1}] = \nu^{\sfrac{k}{2}} b^{(k)}(y) \p_x \, , \qquad  k\geq 2 \, . \label{eq:z:kplusone}
\end{align}
\end{subequations}
\end{definition}

We shall frequently refer to the flow maps and pull backs induced by the vector fields defined above.  For example, in the case of $Y$ and $X_0$, these flows maps are simply translations along the trajectories of $Y$ and $X_0$, defined by
\begin{equation}\notag
    \exp(\sigma Y)u(x,y,t) = u(x,y+\sigma \nuhalf , t), \qquad \exp(\sigma X_0)u(x,y,t) = u(x+\sigma b(y), y, t+\sigma) \, .
\end{equation}

\newcommand{\czero}{c_0}
\newcommand{\ck}{c_k}
\newcommand{\ckminus}{c_{k-1}}
\newcommand{\ckplus}{c_{k+1}}
\newcommand{\cfin}{c_{N+1}}
\begin{definition}[Parameters]
For $0\leq k \leq N+2$, we shall use the notation
\begin{equation}\label{eq:sk:def}
s_k = \frac{1}{k+2}, \qquad m_k=\frac{1}{s_k} \, .
\end{equation}
Note that by the definition of $m_k$, we have $m_{k+1}=1+m_k$.  In addition, for $0\leq k \leq N+1$, we define an increasing sequence of positive numbers
\begin{equation}\label{eq:ck:def}
    0 < \czero < c_1 < \cdots < \ckminus < \ck < \ckplus < \cdots < \cfin < \infty \, .
\end{equation}
While we shall specify the exact values of the $\ck$ later,\footnote{{See conditions \eqref{eq:ck:one}, \eqref{eq:ck:brak:one}, \eqref{eq:ck:brak:two}, \eqref{eq:ck:brak:three}, and \eqref{eq:ck:brak:combo}, which culminate in the choice of the $c_k$'s in Corollary~\ref{cor:improvedbrackets}. }}  each $\ck$ satisfies $\ck^{2} \approx_{b,k} \nu^{-\frac{N+1}{N+3}}$, and $\czero^2 = \nu^{-\frac{N+1}{N+3}}$.
\end{definition}

We shall use $s_k$, $m_k$, and $\ck$ to define a number of Besov-type norms. The parameter $s_k$ specifies an \emph{index of differentiability}, while $\ck$ quantifies the \emph{magnitudes} of the finite differences allowed in each Besov-type norm. These spaces are complicated by (1) the fact that they are $B^s_{p,q}$-type Besov norms with $q=+\infty$, (2) finite differences must stay within the space-time domain under consideration, and (3) an extension procedure would be non-trivial (for example, $X_0 u$ cannot be extended by a simple even reflection in time).

\begin{definition}[Besov Spaces]\label{def:besov:spaces}
We define the following fractional Besov-type seminorm along the vector fields $X_0$: 
\begin{align}
    \norm{u}_{\X} = \sup_{\sigma \in (0,\sfrac{\czero}{\sqrt{2}})} \sigma^{-1} \big{(}& \norm{\exp (\sigma^{2} X_0) u - u}_{L^2(\Omega \times (0,\sfrac{c_0^2}{{2}}))}\notag\\
    &\qquad + \norm{\exp(-\sigma^2 X_0) u - u}_{L^2(\Omega \times (\sfrac{\czero^2}{2},\czero^2))} \big{)} \notag
\end{align}
For $u \in L^2_{t,x} H^1_y(\Omega \times (0,c_0^2))$ (qualitative assumption), we define the seminorm
\begin{equation}\notag
    \| u \|_{\Y} = \nu^{\sfrac{1}{2}} \| \p_y u \|_{L^2(\Omega \times (0,c_0^2))} \, .
\end{equation}
This norm is appropriate in Dirichlet, periodic, or Neumann settings, and by the analysis in Evans p. 292--293 \cite{evans} is equivalent to
\begin{align}
    \tilde{\|} u \|_{\Y} = \sup_{\sigma \in \left(0,\frac{\nu^{-\sfrac{1}{2}}}{2}\right)} \sigma^{-1} \big{(}& \norm{\exp (\sigma Y) u - u}_{L^2(\T \times (0,\sfrac{1}{2}) \times (0,\czero^2))}\notag\\
    &\qquad + \norm{\exp(-\sigma Y) u - u}_{L^2(\T \times (\sfrac{1}{2},1) \times (0,\czero^2))} \big{)} \, . \notag
\end{align}
That is, $\| u \|_{\Y} \approx \tilde{\|} u \|_{\Y}$. For $g \in L^2_{t,x} (H^1)^*_y(\Omega \times (0,c_0^2))$ ($H^{-1}_y := (H^1_0)^*_y$ in the Dirichlet setting), we define the dual seminorm
\begin{equation}\notag
    \left\| g \right\|_{\Y^*(\Omega\times(0,\czero^2))} = \sup_{ \left\| u \right\|_{\Y}=1 } \int_{\Omega \times (0,c_0^2)} gu \, dx \, dy \, dt  < \infty \, 
\end{equation}
where $u \in L^2_{t,x} H^1_y(\Omega \times (0,c_0^2))$ ($(H^1_0)_y$ in the Dirichlet setting). This definition is suitable for the integration by parts in Section~\ref{sec:proofofmaintheorem}.

Consider a vector field $Z = c(y) \p_x$ and $s>0$. For a free parameter $\sigma_Z>0$, we define
\begin{equation}\label{eq:Qz:def}
	\norm{u}_{Q^s_Z} = \sup_{\sigma \in (0,\sigma_Z)} \sigma^{-1} \norm{\exp \sigma^{\sfrac{1}{s}} Z u - u}_{L^2(\Omega \times (0,\czero^2))} \, .
\end{equation}
If $c(y)\equiv 1$, we set {$\sigma_{\p_x}=\infty$}\footnote{Due to the periodicity in $x$, finite differences for $\sigma> 1$ cannot contribute to the supremum.} so that
\begin{equation}\label{eq:q:s:dx:def}
\left\| u \right\|_{Q^s_{\p_x}} = \sup_{\sigma\in(0,\infty)} \sigma^{-1} \left\| \exp(\sigma^{\sfrac{1}{s}} \p_x) u - u \right\|_{L^2(\Omega\times(0,\czero^2))} \, .
\end{equation}
For $k\geq 1$, set {$\sigma_{Z_k}=\ck$}, the precise value of which will be determined later.  Then define
\begin{equation}\notag
    \norm{u}_{\bZ_k} = \norm{u}_{Q^{s_k}_{Z_k}(\Omega \times (0,\czero^2))} = \sup_{(0,\ck)} \sigma^{-1} \left\| \exp(\sigma^{m_k}Z_k) u - u \right\|_{L^2(\Omega\times(0,\czero^2))} \, .
\end{equation}
\end{definition}

The $Q$ stands for `quotient'. Let us make an analogy for such spaces in the periodic single variable case $x\in \T$. We have that $Q^1_{\p_x}$ and $\dot H^1$ are equivalent \cite{evans}. When $s \in (0,1)$ is not an integer, we have that $Q^s_{\p_x}$ and $\dot B^s_{2,\infty}$ are equivalent.
\smallskip

Before recording a few simple properties of these spaces, let us first set some notation. Recalling \eqref{eq:crit:points}, let $U_i$ be a sufficiently small neighborhood of $y_i$ so that for $k\geq 2$,
\begin{equation}\notag
    B_{k} = \inf_{y\in \mathcal{U}_k} \left|b^{(k)}(y)\right| > 0 \, , \qquad \mathcal{U}_k = \bigcup_{\{i:N_i=k-1\}} U_i \, ,
\end{equation}
and
\begin{equation}\notag
    B_{1} = \inf_{y\in \mathcal{U}_1} \left|b'(y)\right| >0 \,, \qquad \mathcal{U}_1 = (0,1)\setminus\bigcup_{i=1}^M U_i \, .
\end{equation}
The next lemma asserts that $\mathcal{U}_k$ is the set of $y$-values on which $Z_k$ is coercive in the $x$-direction, and $B_k$ is the lower bound on the coercivity.

\begin{lemma}[Comparable norms]\label{lem:comparable}
Let $A(y):D\rightarrow \R$ be a smooth function, and let $\mathcal{U}_k$ and $B_k$ be defined as above.  
\begin{enumerate}
\item If $A(y)$ satisfies $ 0 \leq A \leq A_1$, then for any value of $\sigma_{A\p_x}$, we have that
\begin{equation}
\label{eq:A:comparable}
 	 \norm{u}_{Q_{A \p_x}^s} \les_s A_1^s \norm{u}_{Q^s_{\p_x}} \, .
\end{equation}
If in addition ${\sigma_{A\p_x}\geq A_0^{-s}\geq 1}$ and $0 < A_0 \leq A \leq A_1$, we have the two-sided bound
\begin{equation}
\label{eq:A:comparable:alt}
 	 A_0^s \norm{u}_{Q^s_{\p_x}} \les_s \norm{u}_{Q_{A \p_x}^s} \les_s A_1^s \norm{u}_{Q^s_{\p_x}} \, .
\end{equation}
\item For $k\geq 1$ and
\begin{equation}\label{eq:ck:one}
{\ck\geq\left(\nu^{\sfrac{k}{2}}B_k\right)^{-s_k}} \geq 1 \, ,
\end{equation}
we have that
\begin{equation}
\label{eq:Zk:comparable}
    \left(\nu^{\sfrac{k}{2}} B_k \right)^{s_k} \left\| \mathbf{1}_{\mathcal{U}_k} u \right\|_{Q_{\p_x}^{s_k}} \leq \left\| \mathbf{1}_{\mathcal{U}_k} u \right\|_{\bZ_k} \leq \left(\nu^{\sfrac{k}{2}} \left\| b^{(k)} \right\|_{L^\infty(D)} \right)^{s_k} \left\| u \right\|_{Q_{\p_x}^{s_k}} \, .
\end{equation}
\end{enumerate}
\end{lemma}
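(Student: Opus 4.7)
The plan is to establish (1) by Fourier analysis in $x$, and then deduce (2) by restriction to $\mathcal{U}_k$. Since the flow of $A\p_x$ translates only in $x$, Parseval gives
\begin{equation*}
\|\exp(\sigma^{1/s} A\p_x) u - u\|_{L^2(\Omega\times(0,c_0^2))}^2 = \int_{D\times(0,c_0^2)} \sum_\xi |\hat u(\xi,y,t)|^2 \,|e^{i\xi\sigma^{1/s} A(y)} - 1|^2 \, dy\, dt,
\end{equation*}
where $\hat u(\xi,y,t)$ is the $x$-Fourier coefficient. Testing $\|u\|_{Q^s_{\p_x}}$ at $\sigma'=2^{-js}$, so that $|e^{i\xi(\sigma')^{1/s}}-1|^2\gtrsim 1$ on the Littlewood--Paley band $|\xi|\sim 2^j$, produces the band estimate $\int \sum_{|\xi|\sim 2^j} |\hat u|^2 \, dy \, dt \lesssim 2^{-2js}\|u\|_{Q^s_{\p_x}}^2$, which I use throughout.

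For the upper bound in (1), I insert $|e^{i\alpha}-1|^2 \leq \min(\alpha^2, 4)$ with $|\alpha|\leq |\xi|\sigma^{1/s}A_1$ into the Parseval identity, decompose $\xi$ dyadically, and split the $j$-sum at $j_\ast := \log_2((\sigma^{1/s} A_1)^{-1})$. For $j\leq j_\ast$ the quadratic bound combined with the band estimate yields a geometric sum of size $\sigma^2 A_1^{2s}\|u\|_{Q^s_{\p_x}}^2$; for $j>j_\ast$ the uniform bound $4$ yields another of the same size. Dividing by $\sigma^2$ and taking $\sup_\sigma$ gives $\|u\|_{Q^s_{A\p_x}}^2 \lesssim_s A_1^{2s}\|u\|_{Q^s_{\p_x}}^2$, uniformly in the free parameter $\sigma_{A\p_x}$.

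For the lower bound in (1), I exploit that $A_0\p_x$ and $(A-A_0)\p_x$ commute and sum to $A\p_x$. Writing $\exp(\sigma^{1/s} A\p_x) u - u = X + Y$ with $X = \exp(\sigma^{1/s} A_0\p_x) u - u$ and $Y = \exp(\sigma^{1/s} A_0\p_x)\bigl[\exp(\sigma^{1/s}(A-A_0)\p_x) u - u\bigr]$, the $L^2$-isometry of the constant shift $\exp(\sigma^{1/s} A_0\p_x)$ and the bound $\|X+Y\|^2 \geq \tfrac12\|X\|^2 - \|Y\|^2$ give
\begin{equation*}
\sigma^{-2}\|\exp(\sigma^{1/s} A\p_x) u - u\|_{L^2}^2 \geq \tfrac12\sigma^{-2}\|\exp(\sigma^{1/s} A_0\p_x) u-u\|_{L^2}^2 - \sigma^{-2}\|\exp(\sigma^{1/s}(A-A_0)\p_x) u-u\|_{L^2}^2.
\end{equation*}
Evaluating at the $\sigma_\ast$ that maximises the first term on the right (the substitution $\sigma'=\sigma A_0^s$ identifies that maximum with $A_0^{2s}\|u\|_{Q^s_{\p_x}}^2$ and is admissible because $\sigma_{A\p_x}\geq A_0^{-s}$), and controlling the second term above by the already-proved upper bound with coefficient $A-A_0\in[0,A_1-A_0]$, produces $\|u\|_{Q^s_{A\p_x}}^2 \geq \bigl(\tfrac12 A_0^{2s} - C_s (A_1-A_0)^{2s}\bigr)\|u\|_{Q^s_{\p_x}}^2$. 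If $A_1/A_0$ is close to $1$ the error absorbs directly; the general case follows by partitioning $[A_0,A_1]$ into a finite chain of subintervals of bounded relative ratio and telescoping the argument along the chain.

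For (2), set $A_k(y):=\nu^{k/2}b^{(k)}(y)$, so $Z_k=A_k\p_x$. Because the $Z_k$-flow acts purely by $x$-shifts, it commutes with multiplication by $\mathbf{1}_{\mathcal{U}_k}$, hence $\|\mathbf{1}_{\mathcal{U}_k}u\|_{\bZ_k}$ and $\|\mathbf{1}_{\mathcal{U}_k}u\|_{Q^{s_k}_{\p_x}}$ may be computed with $u$ replaced by $\tilde u := \mathbf{1}_{\mathcal{U}_k} u$. On the support of $\tilde u$ we have $A_k \in [\nu^{k/2}B_k,\,\nu^{k/2}\|b^{(k)}\|_{L^\infty}]$, and the hypothesis $c_k\geq(\nu^{k/2}B_k)^{-s_k}$ is exactly the admissibility condition $\sigma_{Z_k}\geq A_0^{-s_k}$ demanded by the lower bound in (1). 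Applying (1) to $\tilde u$ then yields \eqref{eq:Zk:comparable}. The principal technical obstacle will be the telescoping absorption in the lower bound of (1); the upper bound and the reduction from (2) to (1) are essentially mechanical consequences of Parseval.
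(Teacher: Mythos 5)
Your route is genuinely different from the paper's: you work on the Fourier side in $x$ with a Littlewood--Paley band estimate, whereas the paper follows H\"ormander's Lemma~4.1 and never takes a Fourier transform --- it averages the finite difference over an auxiliary parameter $h$, adds and subtracts $u(x+hA_1,y)$ (resp.\ $u(x+hA(y),y)$ for the lower bound), and changes variables $x'=x+hA_1$ (resp.\ $x'=x+hA(y)$, which is exactly where $A\geq A_0>0$ and $\sigma_{A\p_x}\geq A_0^{-s}$ enter). Your Parseval argument for the upper bound \eqref{eq:A:comparable} is correct (the band estimate at $\sigma'=2^{-js}$ is legitimate because $\sigma_{\p_x}=\infty$, and the split at $j_\ast$ gives the right geometric sums), and your reduction of part~(2) to part~(1) is the same as the paper's.

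The soft spot is the lower bound in \eqref{eq:A:comparable:alt}. Your base case (ratio $A_1/A_0$ close to $1$) is sound, but ``telescoping along the chain'' is not spelled out, and its natural implementation loses a constant. Partitioning $[A_0,A_1]$ into $n\sim_s\log(A_1/A_0)$ intervals $I_i=[a_i,a_{i+1}]$ of bounded ratio and setting $u_i=\mathbf{1}_{A^{-1}(I_i)}u$, the base case gives $a_i^{2s}\norm{u_i}_{Q^s_{\p_x}}^2\les_s\norm{u_i}_{Q^s_{A\p_x}}^2\leq\norm{u}_{Q^s_{A\p_x}}^2$ for each $i$, and $\norm{u}_{Q^s_{\p_x}}^2\leq\sum_i\norm{u_i}_{Q^s_{\p_x}}^2$ since a supremum (over $\sigma$) of a sum is at most the sum of the suprema. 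But the recombination on the other side, $\sum_i\norm{u_i}_{Q^s_{A\p_x}}^2\les\norm{u}_{Q^s_{A\p_x}}^2$, goes the wrong way: a sum of suprema over $\sigma$ is only controlled by $n$ times the supremum of the sum. You therefore obtain $A_0^{2s}\norm{u}_{Q^s_{\p_x}}^2\les_s n\,\norm{u}_{Q^s_{A\p_x}}^2$, with a constant depending on $A_1/A_0$ rather than only on $s$ as \eqref{eq:A:comparable:alt} asserts. This is harmless for the application in part~(2), where $A_1/A_0=\norm{b^{(k)}}_{L^\infty}/B_k$ depends only on $b$, but it does not prove the lemma as stated. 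To get the $s$-only constant, replace the chain by the direct argument: write $\norm{u(\cdot+\tilde\sigma,\cdot)-u}_{L^2}^2$ as its average over $h\in[0,\tilde\sigma A_0^{-1}]$, add and subtract $u(x+hA(y),y)$, and substitute $x'=x+hA(y)$ in the first piece and $h=\tilde h^{1/s}$ in the second; the hypothesis $\sigma_{A\p_x}\geq A_0^{-s}$ guarantees that all resulting finite differences along $A\p_x$ are admissible, and no partition of $[A_0,A_1]$ is needed.
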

\begin{proof}
Since time plays a passive role in this argument, we shall suppress the time dependence. We first prove \eqref{eq:A:comparable} following Hormander's Lemma 4.1 \cite{hormander67}.  After making the identification $\tilde\sigma=\sigma^{\sfrac{1}{s}}$ in \eqref{eq:Qz:def}, we may write that
\begin{align}
\label{eq:increment:bound}
\tilde\sigma^{-2s}&\| \exp(\tilde\sigma A(y) \p_x ) u - u \|_{L^2(\Omega\times(0,\czero^2))}^2 \notag\\
&= \tilde\sigma^{-2s} \barint_{0\leq h\leq \tilde\sigma} \| \exp(\tilde\sigma A(y) \p_x ) u - u \|_{L^2(\Omega\times(0,\czero^2))}^2  dh \notag\\
&\leq \tilde\sigma^{-2s} \barint_{0\leq h \leq \tilde\sigma} \int_{\Omega\times(0,\czero^2)} |u (x + \tilde\sigma A(y), y) - u(x+hA_1,y) |^2\, dx\, dy\,dt\, dh \notag \\
&\qquad + \tilde\sigma^{-2s} \barint_{0\leq  h \leq \tilde\sigma} \int_{\Omega\times(0,\czero^2)} |u (x + hA_1, y) - u(x,y) |^2\, dx\,dy\,dt\, dh \notag \\
&\leq \tilde\sigma^{-2s} \barint_{0\leq h \leq \tilde\sigma} \int_{\Omega\times(0,\czero^2)} |u (x + \tilde\sigma A(y), y) - u(x + h A_1 ,y) |^2 \, dx\,dy\,dt\, dh  + A_1^{2s} \| u \|_{Q_{\p_x}^s}^2 \, .
\end{align}
In order to estimate the first integral, we define the new variables 
\begin{equation}
\label{eq:change:of:variables}
    x' = x + h A_1, \qquad h = \tilde h + \tilde\sigma \frac{A(y)}{A_1} \, .
\end{equation}
Using \eqref{eq:change:of:variables}, we have that
\begin{align}
\label{eq:increment:bound:2}
   &  \tilde\sigma^{-2s} \barint_{0\leq h \leq \tilde\sigma} \int_{\Omega\times(0,\czero^2)} |u (x + \tilde\sigma A(y), y) - u(x+hA_1,y) |^2\, dx\, dy\,dt\, dh \notag\\
   & \leq \tilde\sigma^{-2s} \cdot 2 \barint_{ -\tilde\sigma\sfrac{A}{A_1} \leq \tilde h \leq \tilde \sigma} \int_{\Omega\times(0,\czero^2)} |u (x'-\tilde h A_1, y) - u(x' ,y) |^2\, dx'\, dy\,dt\, d\tilde h \notag \\
   &\leq \tilde\sigma^{-2s} \cdot 2 \barint_{ -\tilde\sigma\sfrac{A}{A_1} \leq \tilde h \leq \tilde \sigma} A_1^{2s} \tilde{h}^{2s} \left\| u \right\|_{Q_{\p_x}^s} \notag \\
   &\lesssim A_1^{2s} \left\| u \right\|_{Q_
  {\p_x}^s}^2 \, .
\end{align}
Combining \eqref{eq:increment:bound} and \eqref{eq:increment:bound:2} we have that
\begin{equation}
\label{eq:A:upperbound}
    \| u \|_{Q_{A \p_x}^s }^2 \lesssim A_1^{2s} \| u \|_{Q_{ \p_x}^s }^2 \, ,
\end{equation}
which proves \eqref{eq:A:comparable}.

To prove the lower bound in \eqref{eq:A:comparable:alt}, we use that one may assume that $\sigma\in(0,1)$ in \eqref{eq:q:s:dx:def} and again make the identification $\tilde\sigma=\sigma^{\sfrac{1}{s}}$ to write that 
\begin{align}
    A_0^{2s} \left\| u \right\|_{Q_{\p_x}^s}^2 &= A_0^{2s} \sup_{\tilde\sigma\in(0,1)} \tilde\sigma^{-2s} \barint_{0\leq h \leq \tilde\sigma A_0^{-1}} \int_{\Omega\times(0,\czero^2)} | u(x+\tilde\sigma,y) - u(x,y) |^2 \,dx\,dy\,dt\,dh \, . \notag
\end{align}
Adding and subtracting $u(x+hA(y),y)$ this time gives that
\begin{align}
    \tilde\sigma^{-2s} &\barint_{0\leq h \leq \tilde\sigma A_0^{-1}} \int_{\Omega\times(0,\czero^2)} | u(x+\tilde\sigma,y) - u(x,y) |^2 \,dx\,dy\,dt\,dh \notag\\
    &\les \tilde\sigma^{-2s}\barint_{0\leq h \leq \tilde \sigma A_0^{-1}} \int_{\Omega\times(0,\czero^2)} | u(x+\tilde\sigma,y) - u(x+hA(y),y) |^2 \,dx\,dy\,dt\,dh \notag\\
    &\qquad + \tilde\sigma^{-2s}\barint_{0\leq h \leq \tilde \sigma A_0^{-1}} \int_{\Omega\times(0,\czero^2)} |  u(x+hA(y),y) -u(x,y) |^2 \,dx\,dy\,dt\,dh \, . \label{eq:splitting:one}
\end{align}
To bound the second term, we make the change of variables $h=\tilde h^{\sfrac{1}{s}}$ and obtain
\begin{align}
    \tilde\sigma^{-2s}&\barint_{0\leq h \leq \tilde \sigma A_0^{-1}} \int_{\Omega\times(0,\czero^2)} |  u(x+hA(y),y) -u(x,y) |^2 \,dx\,dy\,dt\,dh \notag\\
    &=\tilde\sigma^{-2s}\cdot (\tilde\sigma A_0^{-1})^{-1} \cdot \frac{1}{s} \int_{0\leq \tilde h \leq (\tilde\sigma A_0^{-1})^{s}} \int_{\Omega\times(0,\czero^2)} |u(x+\tilde h^{\sfrac{1}{s}}A(y),y) - u(x,y)|^2 \,dx\,dy\,dt \, \tilde{h}^{\sfrac{1}{s}-1} \,d\tilde h \notag\\
    &\leq \tilde\sigma^{-2s-1} \cdot A_0 \cdot\frac{1}{s} \int_{0\leq \tilde h \leq (\tilde\sigma A_0^{-1})^{s}} \tilde{h}^{\sfrac{1}{s}+1} \left\| u \right\|_{Q_{A\p_x}^s}^2 \,d\tilde h \notag \\
    &\les_s A_0^{-2s} \left\| u \right\|_{Q_{A\p_x}^s}^2 \,  \notag
\end{align}
which gives the desired bound in \eqref{eq:A:comparable:alt}. Notice that in order to appeal to the $\left\| u \right\|_{Q_{A\p_x}^s}$ norm, we have used that $\tilde h \leq (\tilde\sigma A_0^{-1})^s\leq A_0^{-s} \leq \sigma_{A\p_x}$.

For the first term from \eqref{eq:splitting:one}, we use the change of variables
\begin{equation}
\label{eq:change:of:variables:2}
    x' = x + A(y)h, \qquad h = h'+ \frac{\tilde\sigma}{A(y)} \,,
\end{equation}
which is well defined by the assumption that $A(y)\geq A_0 >0$. Then the first term from \eqref{eq:splitting:one} may be estimated as
\begin{align}
    &\tilde\sigma^{-2s}\barint_{0\leq h \leq \tilde \sigma A_0^{-1}} \int_{\Omega\times(0,\czero^2)} | u(x+\tilde\sigma,y) - u(x+hA(y),y) |^2 \,dx\,dy\,dt\,dh \notag\\
    &= \tilde\sigma^{-2s} \int_{\Omega\times(0,\czero^2)} \barint_{{\tilde\sigma}{A(y)}^{-1}\leq h' \leq \tilde\sigma A_0^{-1}} | u(x'-h'A(y),y) - u(x',y) |^2 \,dh' \,dx'\,dy\,dt \notag  \\
    &\leq 2 \tilde\sigma^{-2s-1} A_0 \cdot\frac{1}{s} \int_{\Omega\times(0,\czero^2)} \int_{|\tilde h|\leq A_0^{-s}} | u(x'-\tilde h^{\sfrac{1}{s}}A(y),y) - u(x',y) |^2 \tilde h^{\sfrac{1}{s}-1} \,d\tilde h \,dx'\,dy\,dt \notag \\
    &\leq 2 \tilde\sigma^{-2s-1} A_0 \cdot\frac{1}{s} \int_{|\tilde h|\leq A_0^{-s}} \tilde h^{\sfrac{1}{s}+1} \left\| u \right\|_{Q_{A\p_x}^s}^2 \,d\tilde h \notag\\
    &\les_s A_0^{-2s} \left\| u \right\|_{Q_{A\p_x}^s}^2 
\end{align}
Note that after making the change of variables $\tilde h^{\sfrac{1}{s}}=-h'$ above, we have again used the assumption that $|\tilde h|\leq A_0^{-s}\leq \sigma_{A\p_x}$ to appeal to the $\left\| u \right\|_{Q_{A\p_x}^s}$ norm. 

Finally, the proof of \eqref{eq:Zk:comparable} follows from \eqref{eq:A:comparable:alt} after using the definition of $\mathcal{U}_k$, $B_k$, and $Z_k$.
\end{proof}

We shall use the following fractional Poincar{\'e} and interpolation inequalities. 

\begin{lemma}[Fractional Poincar{\'e} and Interpolation]\label{lem:fractional:poincare}
The following two estimates hold:
\begin{enumerate}
\item If $u \in Q^{s}_{\p_x}$  and $0< s'<s<1$, then adopting the usual notation for $\langle u \rangle(y,t)$, we have that\footnote{For a definition of Banach-space valued Sobolev functions, one may for example refer to Amann~\cite{amann97}.}
\begin{equation}
	\norm{u - \langle u \rangle(y,t)}_{L^2(\Omega\times(0,\czero^2))} + \norm{u}_{L^2\left((0,\czero^2);\dot{H}^{s'}\left(\mathbb{T};L^2_y\right)\right)} \les_{s,s'} \norm{u}_{Q^{s}_{\p_x}} \, . \notag
\end{equation}
\item If $0 \leq s_1 < s_2 \leq 1$, $0< \theta < 1$, and $s=\theta s_1+\theta s_2$, then
\begin{align}
    \left\| u \right\|_{Q^s_{\partial_x}} &\lesssim \left\| u \right\|_{Q^{s_1}_{\partial_x}}^\theta  \left\| u \right\|_{Q^{s_2}_{\partial_x}}^{1-\theta} \notag\\
    \left\| u \right\|_{\dot{H}^s\left(\mathbb{T};L^2_y\right)} &\lesssim \left\| u \right\|_{\dot{H}^{s_1}\left(\mathbb{T};L^2_y\right)}^\theta  \left\| u \right\|_{\dot{H}^{s_2}\left(\mathbb{T};L^2_y\right)}^{1-\theta} \, . \notag
\end{align}
\end{enumerate}
\end{lemma}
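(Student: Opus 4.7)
The plan is to pass to the Fourier series representation in the periodic variable $x\in\T$; the variables $y$ and $t$ play only a passive role as parameters, and after Plancherel and Fubini everything reduces to a one-dimensional statement in $x$. Writing $u(x,y,t) = \sum_{k\in\Z} \widehat u(k,y,t)\, e^{2\pi i k x}$, we have the identity
\[
\| u(\cdot + h, y, t) - u(\cdot, y, t)\|_{L^2_x}^2 = \sum_{k\neq 0} 2\bigl(1-\cos(2\pi k h)\bigr)|\widehat u(k,y,t)|^2 ,
\]
and, after the substitution $h = \sigma^{1/s}$, the seminorm $\|u\|_{Q^s_{\p_x}}$ becomes the usual $\dot B^s_{2,\infty}$-type seminorm $\sup_{h>0}\, h^{-s}\|u(\cdot+h,\cdot,\cdot)-u\|_{L^2(\Omega\times(0,\czero^2))}$.

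For the Poincar\'e part of~(1), I would integrate the pointwise bound $\|u(\cdot+h)-u\|_{L^2}^2 \leq h^{2s}\|u\|_{Q^s_{\p_x}}^2$ in $h$ over $(0,1)$: the right-hand side is $\les \|u\|_{Q^s_{\p_x}}^2$ since $s>0$, while by Plancherel and the identity $\int_0^1(1-\cos(2\pi kh))\,dh = 1$ for each $k\neq 0$, the left-hand side equals $2\|u-\langle u\rangle\|_{L^2(\Omega\times(0,\czero^2))}^2$. For the $\Hdot^{s'}$ bound I would instead multiply by $h^{-2s'-1}$ before integrating on $(0,1)$: the right-hand side remains finite because $s>s'$, and the change of variables $\tau = 2\pi|k|h$ in the resulting $h$-integral produces a factor $\gtrsim |k|^{2s'}$ uniformly in $k\neq 0$, after which Plancherel in $x$ gives the desired bound on $\|u\|_{L^2_t\Hdot^{s'}_x L^2_y}$.

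For~(2), the $Q^s_{\p_x}$ interpolation is pointwise in $h$: writing $s = \theta s_1 + (1-\theta)s_2$ (the stated identity $s=\theta s_1+\theta s_2$ appears to be a typo), one factors
\[
h^{-s}\|u(\cdot+h)-u\|_{L^2} = \bigl(h^{-s_1}\|u(\cdot+h)-u\|_{L^2}\bigr)^\theta \bigl(h^{-s_2}\|u(\cdot+h)-u\|_{L^2}\bigr)^{1-\theta}
\]
and bounds each factor by the corresponding $Q^{s_i}_{\p_x}$ norm before taking the supremum over $h$. The $\Hdot^s$ interpolation is classical: $|k|^{2s}|\widehat u(k)|^2 = \bigl(|k|^{2s_1}|\widehat u(k)|^2\bigr)^\theta \bigl(|k|^{2s_2}|\widehat u(k)|^2\bigr)^{1-\theta}$, and H\"older's inequality with exponents $1/\theta$ and $1/(1-\theta)$, applied after integrating in $y$ and summing in $k$, yields the bound.

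The only real technicality is to secure the uniform-in-$k$ lower bound on the oscillatory integral $\int_0^1 h^{-2s'-1}(1-\cos(2\pi kh))\,dh$ used in the $\Hdot^{s'}$ estimate; this is standard, since after the rescaling above the integrand is nonnegative and integrable near $0$ (as $s'<1$), and the $\tau$-integral is bounded below on the interval $[0,2\pi]$ corresponding to $|k|=1$. No truncation in $h$ is needed because the supremum in~\eqref{eq:q:s:dx:def} runs over the full range $(0,\infty)$ when $Z=\p_x$; this would be the single point requiring care if one tried to run the same argument for $Q^s_Z$ with a finite $\sigma_Z$.
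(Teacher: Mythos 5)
Your proof is correct, but it takes a genuinely different route from the paper's. You work on the Fourier side: Plancherel in $x$ turns the finite difference into the multiplier $2(1-\cos(2\pi kh))$, the Poincar\'e bound follows from integrating in $h$ over $(0,1)$ and the exact identity $\int_0^1(1-\cos(2\pi kh))\,dh=1$, and the $\dot H^{s'}$ bound follows from weighting by $h^{-2s'-1}$ and rescaling $\tau=2\pi|k|h$ (your integrability check near $\tau=0$ uses $s'<1$, which the hypothesis supplies). The paper instead stays entirely in physical space: it bounds $\|u-\la u\ra\|_{L^2}^2$ by the Gagliardo-type double integral $\iint |u(x)-u(\tilde x)|^2|x-\tilde x|^{-1-2s'}$ via Jensen's inequality and the lower bound $|x-\tilde x|^{-1-2s'}\gtrsim 1$ on $\T$, identifies that double integral with the $L^2_t\dot H^{s'}(\T;L^2_y)$ norm, and then controls it by $\|u\|_{Q^s_{\p_x}}^2\int_{-1}^1|\sigma|^{2s-1-2s'}\,d\sigma$; for part (2) it factors the Gagliardo integrand and applies H\"older with exponents $1/\theta$ and $1/(1-\theta)$, sketching the $Q^s$ case. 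Your Fourier argument is arguably cleaner on the torus and makes the Poincar\'e step an identity, while the paper's physical-space argument is more in keeping with the finite-difference framework used throughout and generalizes to the $Q^s_Z$ norms with restricted increments; your pointwise-in-$h$ factorization for the $Q^s_{\p_x}$ interpolation is the natural argument for a $B^s_{2,\infty}$-type seminorm and fills in a step the paper omits. You are also right that $s=\theta s_1+\theta s_2$ in the statement is a typo for $s=\theta s_1+(1-\theta)s_2$, and your closing remark about why no truncation in $h$ is needed (since $\sigma_{\p_x}=\infty$) correctly identifies the one place the argument would need care for general $Z$.
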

\begin{proof}
To prove the first inequality, we appeal to Jensen's inequality, the lower bound on $|x-\tilde x|^{-1-s}$ for $x,\tilde x\in \mathbb{T}$, and the bound on $u$ in $Q^s_{\partial_x}$ to write that
\begin{align}
    \left\| u(x,y,t)-\langle u \rangle(y,t) \right\|_{L^2(\Omega\times(0,\czero^2))}^2 &= \iiint_{\mathbb{T}\times(0,1)\times(0,\czero^2)} \left| u(x,y,t) - \barint_{\mathbb{T}} u(\tilde{x},y,t) \,d\tilde x \right|^2 \, dy \, dx \, dt  \notag\\
    & \lesssim \iiint_{\mathbb{T}\times(0,1)\times(0,\czero^2)} \barint_{\mathbb{T}}\frac{|u(x,y,t)-u(\tilde x,y,t)|^2}{|x-\tilde x|^{1+2s'}} \,d\tilde x\,dy \,dx \, dt \notag\\
    &\lesssim \iint_{\mathbb{T}\times(0,\czero^2)} \int_{-\tilde x}^{1-\tilde x} \frac{\left\| u(\tilde x+\sigma,y,t)-u(\tilde x,y,t) \right\|_{L^2_y}^2}{|\sigma|^{1+2s'}} \, d\sigma \, d \tilde x \,dt \notag\\
    &\lesssim \int_0^{\czero^2}\int_{-1}^{1} \frac{\left\| u(\tilde x+\sigma,y,t)-u(\tilde x,y,t) \right\|_{L^2_{y,\tilde x}}^2}{|\sigma|^{1+2s'}} \, d\sigma\,dt  \notag\\
    &\lesssim \| u \|_{Q^s_{\partial_x}}^2 \int_{-1}^{1} \frac{|\sigma|^{2s}}{|\sigma|^{1+2s'}} \,d\sigma \notag\\
    &\lesssim \| u \|_{Q^s_{\partial_x}}^2 \, .
\end{align}
The implicit constant depends only on $s,s'\in(0,1)$ and in particular is independent of $u$. Notice that the second line is precisely the norm of $u$ in $L^2\left((0,1);H^{s'}\left(\mathbb{T};L^2_y\right)\right)$, concluding the proof.

When $s_1>0$, the third inequality follows from writing
\begin{align}
    \frac{\left| u(x,y)-u(\tilde x,y) \right|^2}{|x-\tilde x|^{1+2s}} =     \frac{\left| u(x,y)-u(\tilde x,y) \right|^{2\theta}}{|x-\tilde x|^{\theta(1+2s_1)}}\frac{\left| u(x,y)-u(\tilde x, y) \right|^{2(1-\theta)}}{|x-\tilde x|^{(1-\theta)(1+2s_2)}},
\end{align}
where $\tilde{x} = x + \sigma$,
and then applying H\"{o}lder's inequality with $L^\frac{1}{\theta}$ and $L^\frac{1}{1-\theta}$.  The case $s_1=0$ and the second inequality are similar, and we omit further details.
\end{proof}

\section{Bracket inequalities}\label{sec:hormander}
%

The arguments in this section are based on the following equality, which may be checked directly. To streamline calculations, we adopt the convention, locally in Section~\ref{sec:hormander}, that $Z_0=X_0$, and $\left\| u \right\|_{\bZ_0} = \left\| u \right\|_{\X}$.  Then for $k\geq 0$,
\begin{equation}\label{eq:expansion:basic}
\begin{aligned}
	&\exp (-\varepsilon\sigma^{m_k} Z_k) \exp (- \varepsilon^{-1}\sigma Y) \exp (\varepsilon\sigma^{m_k} Z_k  ) \exp (\varepsilon^{-1}\sigma Y) u(x,y,t) - u(x,y,t) \\
	&\quad = u\left(x + \varepsilon\sigma^{m_k} \nu^{\sfrac{k}{2}} \left[b^{(k)}(y+\varepsilon^{-1}\sigma\nuhalf) - b^{(k)}(y)\right], y,t\right) - u(x,y,t) \, . 
	\end{aligned}
\end{equation}
Using the heuristic that
$$ \varepsilon\sigma^{m_k} \nu^{\sfrac{k}{2}} \left[ b^{(k)}(y+\varepsilon^{-1}\sigma\nuhalf)-b^{(k)}(y) \right]\approx \sigma^{m_{k+1}}\nu^{\frac{k+1}{2}} b^{(k+1)}(y) \, $$
and ignoring terms from the Taylor expansion which have been hidden inside the $\approx$ symbol, we would expect to control $\left\| u \right\|_{{\bZ_{k+1}}}$ using $\left\| u \right\|_{\Y}$ and $\left\| u \right\|_{\bZ_{k}}$. The leftover terms in the Taylor expansion will in fact be controlled by ``higher brackets," meaning $\left\| u \right\|_{\bZ_{j}}$ for $j\geq k+2$. The $\varepsilon$ accounts for the fact that the $\bZ_k$ and $\Y$ norms allow for finite differences of variable magnitude and helps us rebalance inequalities to absorb remainder terms.


\begin{lemma}[Bracket inequality]\label{lemma:bracket}
If $0 \leq k \leq N$ and $\varepsilon_{k+1}$ satisfies the technical condition \eqref{eq:ck:brak:combo}, then for an implicit constant independent of the choice of $\varepsilon_{k+1}$, we have the estimate
\begin{align}
	 \norm{u}_{\bZ_{k+1}} &\les \varepsilon_{k+1}^{s_k} \norm{u}_{\bZ_{k}} + \varepsilon_{k+1}^{-1} \norm{u}_{\Y} +  \sum_{j=k+2}^{N+1} \varepsilon_{k+1}^{s_j(k-j+1)} \norm{u}_{\bZ_{j}} \notag\\
	 &\qquad +  \left(\varepsilon_{k+1}^{k-N-1}\nu^{\frac{N+2}{2}} \norm{b^{(N+2)}}_{L^\infty}\right)^{s_{N+2}} \norm{u}_{Q^{s_{N+2}}_{\p_x}}\, . \label{eq:bracket:modified}
\end{align}
\end{lemma}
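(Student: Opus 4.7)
The plan is to combine the commutator identity \eqref{eq:expansion:basic} with a Taylor expansion of the finite difference $b^{(k)}(y+\varepsilon_{k+1}^{-1}\sigma\nuhalf)-b^{(k)}(y)$ and then telescope. Fix $\sigma\in(0,c_{k+1})$ and write $h=\varepsilon_{k+1}^{-1}\sigma\nuhalf$. Taylor expansion to order $N+2-k$, followed by multiplication by $\varepsilon_{k+1}\sigma^{m_k}\nu^{\sfrac{k}{2}}$, gives (using $m_{k+j}=m_k+j$ and $m_k+N+2-k=m_{N+2}$)
\begin{equation*}
\varepsilon_{k+1}\sigma^{m_k}\nu^{\sfrac{k}{2}}\bigl[b^{(k)}(y+h)-b^{(k)}(y)\bigr]=\sigma^{m_{k+1}}\nu^{\sfrac{k+1}{2}}b^{(k+1)}(y)+\sum_{j=k+2}^{N+1}\frac{\varepsilon_{k+1}^{k-j+1}}{(j-k)!}\sigma^{m_j}\nu^{\sfrac{j}{2}}b^{(j)}(y)+\tilde{R}(y,\sigma),
\end{equation*}
with $|\tilde{R}(y,\sigma)|\les \varepsilon_{k+1}^{k-N-1}\sigma^{m_{N+2}}\nu^{\sfrac{N+2}{2}}\|b^{(N+2)}\|_{L^\infty}$. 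The leading term is exactly the displacement encoded by $\exp(\sigma^{m_{k+1}}Z_{k+1})$, so the idea is to isolate it.

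Next I would decompose $\exp(\sigma^{m_{k+1}}Z_{k+1})u-u$ using a telescoping triangle inequality into (i) the LHS of \eqref{eq:expansion:basic} (the full Taylor displacement), plus (ii) one pure $x$-translation by each Taylor correction for $j\in\{k+2,\ldots,N+1\}$, plus (iii) an $x$-translation by $\tilde{R}$. For piece (i), the triangle inequality together with the $L^2$-isometry of the $Y$- and $Z_k$-flows bounds it by $2\|\exp(\varepsilon_{k+1}\sigma^{m_k}Z_k)u-u\|_{L^2}+2\|\exp(\varepsilon_{k+1}^{-1}\sigma Y)u-u\|_{L^2}$; rescaling the finite-difference parameter ($\sigma'=\varepsilon_{k+1}^{s_k}\sigma$ in the $\bZ_k$ norm, $\sigma'=\varepsilon_{k+1}^{-1}\sigma$ in the $\Y$ norm, via Definition~\ref{def:besov:spaces}) produces the first two terms of \eqref{eq:bracket:modified}. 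For the $j$-th correction in (ii), the displacement is a $Z_j$-flow for time $\tau_j\sim\varepsilon_{k+1}^{k-j+1}\sigma^{m_j}$, and setting $(\sigma')^{m_j}=\tau_j$, i.e.\ $\sigma'\sim\varepsilon_{k+1}^{(k-j+1)s_j}\sigma$, in the definition of $\|u\|_{\bZ_j}$ yields a bound of $\les\varepsilon_{k+1}^{s_j(k-j+1)}\sigma\|u\|_{\bZ_j}$.

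The main obstacle is piece (iii): the shift $\tilde{R}(y,\sigma)$ depends on $y$ but has no distinguished direction of smallness along any $Z_\ell$. Following H\"ormander's averaging trick (\emph{cf.}\ the proof of Lemma~4.2 in~\cite{hormander67}), one averages over an auxiliary parameter $h\in[0,M]$ with $M:=\varepsilon_{k+1}^{k-N-1}\sigma^{m_{N+2}}\nu^{\sfrac{N+2}{2}}\|b^{(N+2)}\|_{L^\infty}$, adds and subtracts $u(x+h,y,t)$, and in the first resulting term changes variables $x'=x+h$, $h'=\tilde{R}(y,\sigma)-h$, so that the $y$-dependent shift becomes a pure-$x$ increment of magnitude $\leq 2M$. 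Both pieces are then controlled by $M^{s_{N+2}}\|u\|_{Q^{s_{N+2}}_{\p_x}}$; since $m_{N+2}s_{N+2}=1$, dividing by $\sigma$ produces exactly the last term of \eqref{eq:bracket:modified}. Collecting all four contributions, dividing by $\sigma$, and taking $\sup_{\sigma\in(0,c_{k+1})}$ closes the proof; the technical condition \eqref{eq:ck:brak:combo} on $\varepsilon_{k+1}$ is precisely what guarantees that each rescaled parameter $\sigma'$ lies in the admissible range of the corresponding Besov seminorm.
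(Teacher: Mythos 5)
Your proposal is correct and follows essentially the same route as the paper: Taylor expansion of $b^{(k)}(y+\varepsilon^{-1}\sigma\nu^{\sfrac{1}{2}})$, the conjugation identity \eqref{eq:expansion:basic}, a telescoping triangle inequality with rescaled finite-difference parameters for the $\bZ_k$, $\Y$, and $\bZ_j$ pieces, and the H\"ormander averaging/change-of-variables trick (which the paper packages as estimate \eqref{eq:A:comparable} of Lemma~\ref{lem:comparable}) for the $y$-dependent remainder shift. The only point you gloss over is that, because the $Y$- and $X_0$-flows exit the spacetime domain, one must first subdivide $\Omega\times(0,c_0^2)$ into the four regions $\T\times I\times J$ matching the one-sided seminorms $\Y$ and $\X$; this is routine bookkeeping and does not affect the argument.
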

\begin{remark}
We check in Corollary~\ref{cor:improvedbrackets} that such a choice of $\varepsilon_{k+1}$ is possible, and then we fix values for $\varepsilon_{k+1}$ and also $c_{k+1}$.
\end{remark}

Notice that we assume control over the norm $\left\| u \right\|_{\X}$, which we show in Section~\ref{sec:interpolation} is controlled by the \emph{a priori} estimates enjoyed by solutions to~\eqref{eq:fpde}.

\begin{proof}[Proof of Lemma~\ref{lemma:bracket}]
To ensure that the finite differences stay inside the domain, which has non-trivial boundary in $t$ as well as in $y$ for the Dirichlet and Neumann cases, we subdivide $\Omega$ into regions $\T \times I \times J$, where $I \in \{ (0,\sfrac{1}{{2}}), (\sfrac{1}{2},1) \}$ and $J \in \{ (0,\sfrac{\czero^2}{{{2}}}), (\sfrac{\czero^2}{{2}},\czero^2) \}$. We only write the case when $I = (0,\sfrac{1}{2})$ and $J=(0,\sfrac{\czero^2}{2})$. The cases with $I = (\sfrac{1}{2},1)$ follow upon replacing $Y$ by $-Y$, and the cases with $J = (\sfrac{\czero^2}{2},\czero^2)$ follow upon replacing $X$ by $-X$. We divide the remainder of the proof into steps.  In the first step, we write out the Taylor expansion corresponding to \eqref{eq:expansion:basic}.  In the second step, we use the admissible ranges of $\sigma$ to derive the constraints \eqref{eq:ck:brak:one}, \eqref{eq:ck:brak:two}, and \eqref{eq:ck:brak:three} on $\varepsilon_{k+1}$ in terms of $\ck$.  Finally, the last step bounds each term using the appropriate Besov norm. The parameter $k$ is fixed throughout the rest of the proof, and we simply write $\varepsilon$ rather than $\varepsilon_{k+1}$.
\smallskip

\textit{Step 1. \, } From Taylor's theorem, we have that
\begin{align}\notag
    b^{(k)}(y+\varepsilon^{-1}\sigma\nuhalf) &= b^{(k)}(y) + \varepsilon^{-1}\sigma\nuhalf b^{(k+1)}(y)\notag\\
    &\qquad + \sum_{j=k+2}^{N+1} \frac{(\varepsilon^{-1}\sigma\nuhalf)^{j-k}}{(j-k)!} b^{(j)}(y) \, + \, O\left((\varepsilon^{-1}\sigma\nuhalf)^{N-k+2}\right) \, , \notag
\end{align}
where the prefactor on $O\left((\varepsilon^{-1}\sigma\nuhalf)^{N-k+2}\right)$ is bounded from above by $\left\| b^{(N+2)} \right\|_{L^\infty}$. Multiplying through by $\varepsilon\sigma^{k+2}\nu^{\sfrac{k}{2}}$ and rearranging, it follows that
\begin{align}
    \sigma^{k+3} \nu^{\frac{k+1}{2}} b^{(k+1)}(y) &= \varepsilon\sigma^{k+2}\nuhalfk\left(b^{(k)}(y+\varepsilon^{-1}\sigma\nuhalf)-b^{(k)}(y)\right) \notag\\
    &\qquad - \sum_{j=k+2}^{N+1} \frac{\sigma^{j+2}\nu^{\sfrac{j}{2}}}{(j-k)!} \varepsilon^{k-j+1} b^{(j)}(y) \, + \, O\left(\sigma^{N+4}\right) \, , \notag
\end{align}
where now the prefactor on $O(\sigma^{N+4})$ is bounded from above by $\varepsilon^{k-N-1}\nu^\frac{N+2}{2}\left\| b^{(N+2)} \right\|_{L^\infty}$. From \eqref{eq:sk:def} and \eqref{eq:expansion:basic}, it follows that (recall that when $k=0$, $Z_0=X_0$)
\begin{equation}
	\label{eq:expansion}
\begin{aligned}
	\exp (\sigma^{m_{k+1}}  Z_{k+1} ) u - u = \exp (-\varepsilon\sigma^{m_k} Z_k) \exp (-\varepsilon^{-1}\sigma Y) \exp (\varepsilon
	\sigma^{m_k}  Z_k  ) \exp (\varepsilon^{-1}\sigma Y) R u - u \, ,
	\end{aligned}
\end{equation}
where the remainder operator $R$ is defined by
\begin{equation}\notag
\begin{aligned}
	R u = \left( \prod_{j = k+2}^{N+1} \exp\left(-\frac{\varepsilon^{k-j+1}}{(j-k)!}{\sigma^{m_j} Z_{j}}\right) \right) \exp ( H \p_x ) u \, ,
	\end{aligned}
\end{equation}
and $H(y,\sigma) = O(\sigma^{m_{N+2}}) $ with prefactor bounded above by $\varepsilon^{k-N-1}\nu^\frac{N+2}{2}\left\| b^{(N+2)} \right\|_{L^\infty}$. Notice that each of the above transformations is volume preserving, but the transformations induced by $Z_j$, $j \geq 1$, and $H$ preserve the domain $\T \times I \times J$, while the transformations induced by $X_0$ and $Y$ do not and will be estimated separately.
\smallskip

\textit{Step 2. \, }
The left-hand side of \eqref{eq:expansion} must be measured in $L^2$ for $\sigma\in(0,\ckplus)$, which necessitates that the same range of $\sigma$ is admissible in the right-hand side. Since all admissible values of $\sigma$ are positive, we may track the largest value of $\sigma$, which for $Z_{k+1}$ is $\ckplus$. In order for $\exp(\varepsilon^{-1}\sigma Y)$ to be well defined, we need
\begin{equation}\label{eq:ck:brak:one}
    \varepsilon^{-1} \ckplus \leq \frac{\nu^{-\frac{1}{2}}}{2} \, .
\end{equation}
In order for $\exp(\varepsilon \sigma^{m_k}Z_k)$ to be well defined, we need
\begin{equation}\label{eq:ck:brak:two}
    \varepsilon\ckplus^{m_k} \leq \ck^{m_k} \, .
\end{equation}
We require that the term with $\exp(H)u$ must be well defined for $\sigma\in(0,c_{k+1})$. However, this term will be bounded using the $Q_{\p_x}^{s_{N+2}}$ norm, for which any values of $\sigma$ are admissible. The other terms in the remainder for $k+2\leq j \leq N+1$ require that
\begin{equation}\label{eq:ck:brak:three}
    \frac{\varepsilon^{k-j+1}}{(j-k)!} \ckplus^{m_j} \leq c_j^{m_j} \, .
\end{equation}
Since the first and third conditions provide lower bounds on $\varepsilon$ and the second provides an upper bound, we consolidate the three conditions by imposing that
\begin{equation}\label{eq:ck:brak:combo}
     \max\left(2\ckplus\nu^{\sfrac{1}{2}}, \max_{k+2\leq j \leq N+1} \left(\left(\frac{\ckplus}{c_j}\right)^{m_j} \frac{1}{(j-k)!} \right)^{\frac{1}{j-1-k}}\right) \leq  \varepsilon \leq \left(\frac{c_k}{c_{k+1}}\right)^{m_k} \, .
\end{equation}
Notice that the right hand side involves an upper bound in terms of $\ck$ and $\ckplus$, while the left hand side involves lower bounds with powers of $c_j$ in the denominator where $j\geq k+2$, and so we shall pick our sequence of `$c_k$'s in increasing order with respect to $k$.
\smallskip

\textit{Step 3. \, } To estimate the left-hand side of \eqref{eq:expansion} in $L^2(\T \times I \times J)$, we introduce a telescoping sum on the right-hand side and apply the triangle inequality:
\begin{equation}
\begin{aligned}\notag
 \big{\|}\exp (\sigma^{m_{k+1}}  &Z_{k+1} ) u - u\big{\|}_{L^2}  \\
 &\leq \big{\|}\exp (-\varepsilon\sigma^{m_k} Z_k) \exp (-\varepsilon^{-1}\sigma Y) \exp (\varepsilon\sigma^{m_k}  Z_k  ) \exp (\varepsilon^{-1}\sigma Y) R u \notag\\
 &\qquad\qquad\qquad\qquad -  \exp (-\varepsilon^{-1}\sigma Y) \exp (\varepsilon\sigma^{m_k}  Z_k  ) \exp (\varepsilon^{-1}\sigma Y) R u\big{\|}_{L^2} \\
 &\quad + \norm{ \exp (-\varepsilon^{-1}\sigma Y) \exp (\varepsilon\sigma^{m_k}  Z_k  ) \exp (\varepsilon^{-1}\sigma Y) R u -   \exp (\varepsilon\sigma^{m_k}  Z_k  ) \exp (\varepsilon^{-1}\sigma Y) R u}_{L^2} \\
 &\quad +\norm{ \exp (\varepsilon\sigma^{m_k}  Z_k  ) \exp (\varepsilon^{-1}\sigma Y) R u -  \exp (\varepsilon^{-1}\sigma Y) R u}_{L^2}\\
 &\quad + \norm{\exp (\varepsilon^{-1}\sigma Y) R u -  R u}_{L^2} + \norm{Ru - u}_{L^2} \, .
\end{aligned}
\end{equation}
Let $\iota = \mathbf{1}_{k = 0}$. 
Changing variables in the integrals on the right-hand side of the previous inequality yields
\begin{equation}
\begin{aligned}
 &\norm{\exp (\sigma ^{m_{k+1}}  Z_{k+1} ) u - u}_{L^2(\T \times I \times J)} \notag\\
 &\quad\leq \norm{\exp (-\varepsilon\sigma^{m_k} Z_k) u - u}_{L^2(\T \times I \times (\varepsilon\iota \sigma^{m_k} + J))} + \norm{ \exp (-\varepsilon^{-1}\sigma Y)  u - u}_{L^2\left(\T \times (\varepsilon^{-1}\nuhalf \sigma + I) \times (\varepsilon\iota \sigma^{m_k} + J)\right)} \\
  &\qquad + \norm{ \exp (\varepsilon\sigma^{m_k}  Z_k  )  u - u}_{L^2\left(\T \times (\varepsilon^{-1}\nuhalf \sigma + I) \times J\right)} + \norm{  \exp (\varepsilon^{-1}\sigma Y) u -  u}_{L^2(\T \times I \times J)} \notag\\
  &\qquad\quad + \norm{ Ru - u}_{L^2(\T \times I \times J)} \, .
\end{aligned}
\end{equation}
Hence,
\begin{equation}
      \norm{\exp (\sigma ^{m_{k+1}}  Z_{k+1} ) u - u}_{L^2(\T \times I \times J)}\les \norm{ Ru - u}_{L^2(\T \times I \times J)} + \sigma \varepsilon^{s_k} \norm{u}_{\bZ_k} + \sigma \varepsilon^{-1} \norm{u}_{\Y} \, . \label{eq:combine:1}
\end{equation}
To estimate $\norm{ Ru - u}_{L^2(\T \times I \times J)}$, we again introduce a telescoping sum and change variables:
\begin{equation}\label{eq:combine:2}
\begin{aligned}
    \norm{ Ru - u}_{L^2(\T \times I \times J)} &\lesssim \sum_{j=k+2}^{N+1} \norm{ \exp\left(-\frac{\varepsilon^{k-j+1}}{(j-k)!}\sigma^{m_j} Z_{j}\right) - u}_{L^2(\T \times I \times J)} + \norm{\exp(H\p_x) u - u}_{L^2(\T \times I \times J)} \\
   & \les \sum_{j=k+2}^{N+1} \sigma \varepsilon^{s_j(k-j+1)} \norm{u}_{\bZ_{j}} + \sigma \left(\varepsilon^{k-N-1}\nu^{\frac{N+2}{2}} \norm{b^{(N+2)}}_{L^\infty}\right)^{s_{N+2}} \norm{u}_{Q^{s_{N+2}}_{\p_x}}.
    \end{aligned}
\end{equation}
Here, we have applied estimate \eqref{eq:A:comparable} from Lemma~\ref{lem:comparable} with $s = s_{N+2}$, $A(y)=h(y)\sigma^{m_{N+2}}$ for some unknown function $h(y)$ depending on the exact form of the remainder $H(y,\sigma)$, and $A_1 = \nu^{\frac{N+2}{2}} \left\| b^{(N+2)} \right\|_{L^\infty}$ to estimate the term containing $H\p_x$. Combining \eqref{eq:combine:1} and \eqref{eq:combine:2} completes the proof.
\end{proof}

\begin{corollary}[Improved bracket inequality, choice of $c_k$, and conditions on $\nu_0$]
\label{cor:improvedbrackets}
For all $0 \leq k \leq N+1$, $\varepsilon>0$, and $\nu \ll_{b} 1$,
\begin{equation}
	\label{eq:improvedbrackets}
	 \norm{u}_{\bZ_k} \leq \varepsilon \norm{u}_{\X} + C(\varepsilon,b)\norm{u}_{\Y} + C(\varepsilon,b)\left(\nu^{\frac{N+2}{2}} \norm{b^{(N+2)}}_{L^\infty}\right)^{s_{N+2}} \norm{u}_{Q^{s_{N+2}}_{\p_x}} \, .
\end{equation}
\end{corollary}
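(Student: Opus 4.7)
The plan is to prove \eqref{eq:improvedbrackets} for each $k \in \{1, \dots, N+1\}$ by iteratively applying Lemma~\ref{lemma:bracket} at every level and absorbing the resulting higher-bracket terms via a weighted sum. (For $k=0$ the claim reduces to $\|u\|_{\X} \leq \varepsilon \|u\|_{\X} + \cdots$, which is tautological when $\varepsilon \geq 1$; the genuinely non-trivial range $\varepsilon < 1$ is delivered by the interpolation estimate of the next section.)

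\smallskip

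\emph{Stage 1 (choice of $c_k$ and $\nu_0$, depending only on $b$).} First I would fix $c_0 < c_1 < \cdots < c_{N+1}$ inductively in order of increasing $k$. At each step $c_k$ is selected to satisfy \eqref{eq:ck:one}, and the subsequent $c_j$ for $j \geq k+2$ are taken so that the ratios $c_j/c_{k+1}$ are large enough to make the terms $((c_{k+1}/c_j)^{m_j}/(j-k)!)^{1/(j-k-1)}$ in the lower bound of \eqref{eq:ck:brak:combo} arbitrarily small. Since $c_{k+1},\dots,c_{N+1}$ remain free at stage $k$, this is always achievable. Finally, $\nu_0(b)$ is taken small enough to dominate the residual $2c_{k+1}\nu^{1/2}$ contribution in \eqref{eq:ck:brak:combo} and to enforce \eqref{eq:ck:brak:one}. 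After this stage, the admissibility of every $\varepsilon_{k+1}$ in the subsequent absorption step is guaranteed.

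\smallskip

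\emph{Stage 2 (weighted sum and absorption).} Given $\varepsilon > 0$, I apply Lemma~\ref{lemma:bracket} at each $k \in \{0, \dots, N\}$ and form
\[
\Sigma := \sum_{k=1}^{N+1} w_k \|u\|_{\bZ_k}, \qquad w_k := \omega^{-k},
\]
for some small $\omega > 0$. A direct bookkeeping shows that, on the right-hand side after summation, the coefficient of $\|u\|_{\bZ_k}$ equals
\[
\mathbf{1}_{k \leq N}\, w_{k+1}\, C\, \varepsilon_{k+1}^{s_k} \;+\; \sum_{m=1}^{k-1} w_m\, C\, \varepsilon_m^{-s_k(k-m)},
\]
while the coefficient of $\|u\|_{\X}$ equals $w_1 C \varepsilon_1^{s_0}$. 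I would choose $\varepsilon_1$ of order $\varepsilon^{m_0}$ so that $w_1 \varepsilon_1^{s_0}/w_k \leq \varepsilon$ for every $k \geq 1$, then choose $\varepsilon_2, \dots, \varepsilon_{N+1}$ inductively with each much smaller than the previous, and finally choose $\omega$ in the admissible range dictated by the already-fixed $\varepsilon_m$'s. These choices ensure that every $\|u\|_{\bZ_k}$-coefficient on the right is at most $w_k/2$, and absorbing $\Sigma$ to the left yields a bound of the form
\[
\Sigma \lesssim C w_1 \varepsilon_1^{s_0} \|u\|_{\X} + C(\varepsilon,b) \|u\|_{\Y} + C(\varepsilon,b)\left(\nu^{(N+2)/2}\|b^{(N+2)}\|_{L^\infty}\right)^{s_{N+2}} \|u\|_{Q^{s_{N+2}}_{\p_x}} .
\]
Dividing by $w_k$ produces \eqref{eq:improvedbrackets} uniformly in $k$.

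\smallskip

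The main obstacle is the tension in the choice of $\varepsilon_{k+1}$: small $\varepsilon_{k+1}$ shrinks the coefficient $\varepsilon_{k+1}^{s_k}$ of $\|u\|_{\bZ_k}$ (good for absorption of the self-coupling), but small $\varepsilon_m$ for $m \leq k-1$ \emph{inflates} the cross-terms $\varepsilon_m^{-s_k(k-m)}$ coupling to higher brackets. The geometric weights $\omega^{-k}$ decay quickly enough in $k$ to compensate, and a strictly ordered, hierarchical selection—first $\varepsilon_1$ in terms of $\varepsilon$, then the remaining $\varepsilon_m$'s decreasing geometrically, and finally $\omega$—reconciles these pressures, provided Stage~1 has supplied enough room in \eqref{eq:ck:brak:combo}.
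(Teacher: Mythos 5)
Your overall engine---apply Lemma~\ref{lemma:bracket} at every level and absorb the higher brackets iteratively---is the right one, but your Stage~1 contains a genuine gap: the constants $c_k$ cannot be fixed ``depending only on $b$'' before the $\varepsilon_{k+1}$'s are known. The left-hand side of \eqref{eq:ck:brak:combo} is a \emph{lower} bound on $\varepsilon_{k+1}$: once the ratios $c_{k+1}/c_j$ are frozen, $\varepsilon_{k+1}$ must stay above the fixed positive threshold $\max_j\big(({c_{k+1}}/{c_j})^{m_j}/(j-k)!\big)^{1/(j-1-k)}$, because decreasing $\varepsilon_{k+1}$ makes the coefficients $\varepsilon_{k+1}^{k-j+1}$ in the remainder operator $R$ \emph{larger} and pushes the finite differences along $Z_j$ outside the admissible window $(0,c_j)$, so \eqref{eq:ck:brak:three} fails and Lemma~\ref{lemma:bracket} is no longer applicable. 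On the other hand, to bring the coefficient of $\norm{u}_{\X}$ down to the prescribed $\varepsilon$ you need $\varepsilon_1\les\varepsilon^{2}$ (and the later $\varepsilon_m$ smaller still), which for $\varepsilon$ small enough drops below that fixed threshold. This is exactly why the paper interleaves the choices: $c_{j}$ for $j\geq k+2$ is selected \emph{after} $\varepsilon_{k+1}$, as an $\varepsilon$-dependent (but $\nu$-independent) multiple of $c_0$, so that the lower bounds generated by \eqref{eq:ck:brak:three} sit below the already-chosen $\varepsilon$'s. Your closing caveat (``provided Stage~1 has supplied enough room'') cannot be discharged with $c_k=c_k(b)$ only; the $c_k$ must depend on $\varepsilon$.

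A second, smaller issue sits in Stage~2. After dividing the coefficient of $\norm{u}_{\bZ_m}$ by $w_m=\omega^{-m}$, absorption forces a two-sided constraint on $\omega$: you need $\omega\gtrsim\varepsilon_{m+1}^{s_m}$ from the self-coupling term and $\omega\lesssim\varepsilon_{\ell}^{s_m}$ for every $\ell\leq m-1$ from the cross terms, simultaneously for all $m$. Comparing $m=1$ with $(m,\ell)=(N+1,N)$ gives the compatibility requirement $\varepsilon_2^{s_1}\lesssim\varepsilon_N^{s_{N+1}}$, i.e.\ $\varepsilon_N\gtrsim\varepsilon_2^{(N+3)/3}$: the $\varepsilon_m$ must \emph{not} decrease too fast, so the prescription ``each much smaller than the previous, then choose $\omega$ last'' can leave the admissible range for $\omega$ empty. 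A calibrated choice such as $\varepsilon_m=\kappa^{m_m}$ for a single small $\kappa=\kappa(\varepsilon,b)$ reconciles the two sides; alternatively, the paper's sequential substitution (fix $\varepsilon_k$ at step $k$ and re-derive the inequalities for all $\tilde k\leq k$ so that the surviving sum starts at $j=k+1$) dispenses with the weighted sum entirely. With these two repairs---$\varepsilon$-dependent $c_k$ chosen in tandem with the $\varepsilon_m$, and a calibrated decay of the $\varepsilon_m$---your argument would close.
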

\begin{proof}
The proof proceeds in steps according to the value of a dummy parameter $\ell\in\{1,..., N+1\}$, so that when $\ell$ reaches the terminal value $N+1$, we derive several conditions on $\nu_0$ and conclude the proof. We begin by choosing {$c_0=\nu^{-\frac{N+1}{2(N+3)}}$}.
\smallskip

\textit{Step 1. \,} Choose $c_1$ so that \eqref{eq:ck:one} is satisfied for $k=1$. Now choose $\varepsilon_1$ in \eqref{eq:bracket:modified} (level $k=0$) so that 
\begin{align}\notag
	 \norm{u}_{\bZ_{1}} &\leq \sfrac{\varepsilon}{2} \norm{u}_{\X} + C(\varepsilon,b) \bigg{[} \norm{u}_{\Y} +  \sum_{j=2}^{N+1}  \norm{u}_{\bZ_{j}} + \left(\nu^{\frac{N+2}{2}} \norm{b^{(N+2)}}_{L^\infty}\right)^{s_{N+2}} \norm{u}_{Q^{s_{N+2}}_{\p_x}} \bigg{]} \, ,
\end{align}  
and so that \eqref{eq:ck:brak:two} is satisfied at level $k=0$. This choice of $\varepsilon_1$ may be made \emph{independently of $\nu$} as long as we have chosen $c_1$ to be a $\nu$-independent multiple of $c_0$. As a result of the choices of $c_0$, $c_1$, and $\varepsilon_1$, we have additional new lower bounds on all $c_m$ for $m\geq 2$ from \eqref{eq:ck:brak:three}. Note that our choices and the resulting additional lower bounds may depend on $b$ and $\varepsilon$, but are independent of $\nu$.
\smallskip

\textit{Step 2. \,} Choose $c_2$ so that the new lower bound from \eqref{eq:ck:brak:three} is satisfied for $j=2$ and $k=0$, and so that \eqref{eq:ck:one} is satisfied for $k=2$. Now choose $\varepsilon_2$ in \eqref{eq:bracket:modified} so that for $k=1,2$,
\begin{align}\notag
	 \norm{u}_{\bZ_{k}} &\leq \sfrac{3\varepsilon}{4} \norm{u}_{\X} + C(\varepsilon,b) \bigg{[} \norm{u}_{\Y} +  \sum_{j=3}^{N+1}  \norm{u}_{\bZ_{j}} + \left(\nu^{\frac{N+2}{2}} \norm{b^{(N+2)}}_{L^\infty}\right)^{s_{N+2}} \norm{u}_{Q^{s_{N+2}}_{\p_x}} \bigg{]} \, ,
\end{align} 
and so that \eqref{eq:ck:brak:two} is satisfied at level $k=1$. Notice that the sum crucially begins at $j=3$ in the above inequality. As a result of the choices of $\varepsilon_2$ and $c_2$, we have additional new lower bounds on all $c_m$ for $m\geq 3$ from \eqref{eq:ck:brak:three} which are $\nu$-independent, provided that $c_2$ is a $\nu$-independent multiple of $c_1$.
\smallskip

\textit{Step $k$. \,} Assume that Steps $1,...,k-1$ have been completed successfully, thus fixing choices of $c_0,...,c_{k-1}$ and $\varepsilon_1,...,\varepsilon_{k-1}$. Choose $c_k$ so that all lower bounds resulting from choices of $c_{k'}$ for $k'<k$ from \eqref{eq:ck:brak:three} are satisfied, and so that \eqref{eq:ck:one} is satisfied at level $k$.  Now choose $\varepsilon_k$ in \eqref{eq:bracket:modified} so that for $\tilde k =1,2,...,k$, 
\begin{align}\notag
	 \norm{u}_{\bZ_{\tilde k}} &\leq \frac{(2^{\tilde k}-1)\varepsilon}{2^{\tilde k}} \norm{u}_{\X} + C(\varepsilon,b) \bigg{[} \norm{u}_{\Y} +  \sum_{j=k+1}^{N+1}  \norm{u}_{\bZ_{j}} + \left(\nu^{\frac{N+2}{2}} \norm{b^{(N+2)}}_{L^\infty}\right)^{s_{N+2}} \norm{u}_{Q^{s_{N+2}}_{\p_x}} \bigg{]} \, ,
\end{align}
and so that \eqref{eq:ck:brak:two} is true at level $k-1$. The sum in the above inequality now begins at $k+1$, and we have additional new lower bounds an all $c_m$ for $m\geq k+1$ from \eqref{eq:ck:brak:three}.
\smallskip

\textit{Conclusion. \,} Upon reaching $k=N+1$, the sum in the above inequality is empty, and we thus conclude the proof after ensuring that $\nu$, and by extension $\nu_0$, is small enough so that \eqref{eq:ck:brak:one} is satisfied for all $k$. Note that this is possible since $c_0=\nu^{-\frac{N+1}{2(N+3)}}\ll \nu^{-\frac
{1}{2}}$, and each subsequent $c_k$ was chosen so that \eqref{eq:ck:one}, \eqref{eq:ck:brak:two}, and \eqref{eq:ck:brak:three} were satisfied. The latter two conditions are $\nu$-independent and the first depends on $\nu^{-\frac{ks_k}{2}}$, which is increasing in $k$ and bounded above by $\nu^{-\frac{N+1}{2(N+3)}}$.
\end{proof}

\begin{proposition}[Subelliptic estimate, Poincar{\'e} inequality, and choice of $\nu_0$]\label{prop:subelliptic}
	For all $\nu\leq \nu_0 \ll_b 1$ and $\varepsilon > 0$, we have that
\begin{equation}
\label{eq:subellipticestimate}
	\nu^{\frac{N+1}{2(N+3)}} \left( \norm{u- \langle u \rangle }_{L^2(\Omega\times(0,\czero^2))} + \norm{u}_{Q^{s_{N+1}}_{\p_x}} \right) \les_b \varepsilon\norm{u}_{\X} + C(\varepsilon)\norm{u}_{\Y} \, ,
\end{equation}
where the implicit constant is independent of $\nu$ and $\varepsilon$ but depends on $b$.
\end{proposition}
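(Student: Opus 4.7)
The plan is to assemble the bracket estimates from Corollary~\ref{cor:improvedbrackets}, applied for each $k\in\{1,\dots,N+1\}$, into the desired global bound on $\norm{u}_{Q^{s_{N+1}}_{\p_x}}$. The key geometric observation is that the neighborhoods $\{\mathcal{U}_k\}_{k=1}^{N+1}$ cover $(0,1)$: at each critical point $y_i$ we have $b^{(N_i+1)}(y_i)\neq 0$, so $U_i\subset\mathcal{U}_{N_i+1}$ with $2\leq N_i+1\leq N+1$, while $\mathcal{U}_1$ handles the complement of $\bigcup_i U_i$. Because $\exp(\sigma^{m_{N+1}}\p_x)$ acts only in $x$, the indicator $\mathbf{1}_{\mathcal{U}_k}(y)$ commutes with it, yielding the two elementary facts
\begin{equation*}
\norm{u}_{Q^{s_{N+1}}_{\p_x}}^2 \leq \sum_{k=1}^{N+1} \norm{\mathbf{1}_{\mathcal{U}_k} u}_{Q^{s_{N+1}}_{\p_x}}^2,
\qquad
\norm{\mathbf{1}_{\mathcal{U}_k} u}_{\bZ_k}\leq \norm{u}_{\bZ_k}.
\end{equation*}

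On each $\mathcal{U}_k$ the coefficient $b^{(k)}$ is bounded below by $B_k>0$, so Lemma~\ref{lem:comparable}(2) combined with Corollary~\ref{cor:improvedbrackets} gives, for $k=1,\dots,N+1$,
\begin{equation*}
(\nu^{k/2}B_k)^{s_k}\norm{\mathbf{1}_{\mathcal{U}_k} u}_{Q^{s_k}_{\p_x}} \lesssim \varepsilon\norm{u}_{\X}+C(\varepsilon,b)\norm{u}_{\Y}+C(\varepsilon,b)\bigl(\nu^{(N+2)/2}\bigr)^{s_{N+2}}\norm{u}_{Q^{s_{N+2}}_{\p_x}}.
\end{equation*}
Since $s_k=\tfrac{1}{k+2}\geq s_{N+1}$ for $k\leq N+1$, a direct argument (small translations controlled by the stronger $Q^{s_k}$ norm using $m_{N+1}s_k=(N+3)/(k+2)\geq 1$, large translations dominated via the Poincar\'e inequality of Lemma~\ref{lem:fractional:poincare}(1)) supplies the embedding $\norm{\mathbf{1}_{\mathcal{U}_k}u}_{Q^{s_{N+1}}_{\p_x}}\lesssim \norm{\mathbf{1}_{\mathcal{U}_k}u}_{Q^{s_k}_{\p_x}}$. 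The exponent $ks_k/2=k/(2(k+2))$ is increasing in $k$ with maximum $(N+1)/(2(N+3))$ at $k=N+1$, so $\nu^{(N+1)/(2(N+3))}\leq \nu^{ks_k/2}$ for $\nu<1$; summing over $k$ and absorbing $B_k$ into the $b$-dependent constant then yields
\begin{equation*}
\nu^{\frac{N+1}{2(N+3)}}\norm{u}_{Q^{s_{N+1}}_{\p_x}}\lesssim_b \varepsilon\norm{u}_{\X}+C(\varepsilon,b)\norm{u}_{\Y}+C(\varepsilon,b)\,\nu^{\frac{N+2}{2(N+4)}}\norm{u}_{Q^{s_{N+2}}_{\p_x}}.
\end{equation*}

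To close the inequality, I invoke once more the embedding $Q^{s_{N+1}}_{\p_x}\hookrightarrow Q^{s_{N+2}}_{\p_x}$ (as $s_{N+2}<s_{N+1}$) to bound $\norm{u}_{Q^{s_{N+2}}_{\p_x}}\lesssim \norm{u}_{Q^{s_{N+1}}_{\p_x}}$. The arithmetic identity
\begin{equation*}
\frac{N+2}{2(N+4)}-\frac{N+1}{2(N+3)}=\frac{1}{(N+3)(N+4)}>0
\end{equation*}
shows that the remainder carries a strictly higher power of $\nu$ than the target, so taking $\nu_0$ small (with implicit dependence on $\varepsilon,b$) makes $C(\varepsilon,b)\nu^{1/[(N+3)(N+4)]}\leq\tfrac{1}{2}$ and the remainder is absorbed into the left-hand side. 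Finally, Lemma~\ref{lem:fractional:poincare}(1) supplies $\norm{u-\langle u\rangle}_{L^2}\lesssim \norm{u}_{Q^{s_{N+1}}_{\p_x}}$, completing the proof. The main delicacy is precisely this absorption step: the chain of bracket inequalities closes only because the remainder's $\nu$-exponent strictly dominates the target and because the covering $\bigcup_{k=1}^{N+1}\mathcal{U}_k=(0,1)$ is complete, so that every $y$-slice is controlled by the bracket estimate at some order $k\leq N+1$.
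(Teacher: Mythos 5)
Your proof is correct and follows essentially the same route as the paper: cover $(0,1)$ by the sets $\mathcal{U}_k$, pass from $\bZ_k$ to $Q^{s_k}_{\p_x}$ via Lemma~\ref{lem:comparable}, invoke Corollary~\ref{cor:improvedbrackets}, and absorb the $Q^{s_{N+2}}_{\p_x}$ remainder using the strict exponent gap $\tfrac{N+2}{2(N+4)}-\tfrac{N+1}{2(N+3)}>0$ together with the fractional Poincar\'e inequality. The only cosmetic difference is that you absorb the remainder via the embedding $Q^{s_{N+1}}_{\p_x}\hookrightarrow Q^{s_{N+2}}_{\p_x}$ rather than the interpolation inequality of Lemma~\ref{lem:fractional:poincare}(2); your explicit exponent arithmetic makes precise the paper's terse assertion that the remainder's prefactor is ``$\ll_\nu 1$''.
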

\begin{proof}
From Lemma~\ref{lem:comparable} and Lemma~\ref{lem:fractional:poincare}, we have that for $k\geq 1$,
\begin{equation}\notag
	\norm{ \mathbf{1}_{\mathcal{U}_k} u }_{Q^{s_{k}}_{\p_x}} \les \left(\nu^{\sfrac{k}{2}} B_k \right)^{-s_k} \norm{ \mathbf{1}_{\mathcal{U}_k} u }_{\bZ_{k}} \, .
\end{equation}
Since $\nu^{-\sfrac{ks_k}{2}}\leq \nu^{-\sfrac{(k+1)s_{k+1}}{2}}$ for all $k\geq 1$, we obtain using Corollary~\ref{cor:improvedbrackets} that 
\begin{equation}
\begin{aligned}
 \norm{u}_{Q^{s_{N+1}}_{\p_x}} & \les_b \nu^{-\frac{N+1}{2(N+3)}} \sum_{k=1}^{N+1} \norm{u}_{\bZ_{k}} \\
	&\les_b \varepsilon \norm{u}_{\X} + C(\varepsilon,b)\norm{u}_{\Y} + C(\varepsilon,b)\left(\nu^{\frac{N+2}{2}} \norm{b^{(N+2)}}_{L^\infty}\right)^{s_{N+2}} \norm{u}_{Q^{s_{N+2}}_{\p_x}} \, . \label{eq:sub:one}
	\end{aligned}
\end{equation}
We then apply the fractional Poincar\'e inequality from Lemma~\ref{lem:fractional:poincare} to $u-\langle u \rangle$ to deduce that
\begin{equation}\label{eq:sub:two}
    \left\| u - \langle u \rangle \right\|_{L^2} \les_b \varepsilon \norm{u}_{\X} + C(\varepsilon,b)\norm{u}_{\Y} + C(\varepsilon,b)\left(\nu^{\frac{N+2}{2}} \norm{b^{(N+2)}}_{L^\infty}\right)^{s_{N+2}} \norm{u}_{Q^{s_{N+2}}_{\p_x}} \, .
\end{equation}
Applying the fractional interpolation inequality from Lemma~\ref{lem:fractional:poincare} to the $\left\| u \right\|_{Q_{\p_x}^{s_{N+2}}}$ term on the right-hand side of \eqref{eq:sub:one} and \eqref{eq:sub:two} and using that
\begin{equation}\notag
   C(\varepsilon,b)\left(\nu^{\frac{N+2}{2}} \norm{b^{(N+2)}}_{L^\infty}\right)^{s_{N+2}} \ll_\nu 1
\end{equation}
if $\nu \leq \nu_0 \ll_b 1$ concludes the proof.  Note that we have chosen $\nu_0$ so that the above inequality is satisfied and the conditions on $\nu_0$ from Corollary~\ref{cor:improvedbrackets} are satisfied.

\end{proof}

\section{Interpolation inequality}\label{sec:interpolation}

The estimates in the previous section relied on an iteration written in terms of norms with positive indices of differentiability.  However, the \emph{a priori} estimate for $\left(\partial_t + b(y) \partial_x \right)u$ from~\eqref{eq:fpde} is in the space $\Y^*$, which in the Dirichlet-in-$y$ case one may identify with $L^2_{t,x}H^{-1}_y$ up to a $\nu$-dependent factor.  Combined with the estimate for $u$ in $L^2_{t,x}H^1_y$, one may heuristically expect, by interpolation, that ``$\left(\partial_t + b(y) \partial_x \right)^{\sfrac{1}{2}}u \in L^2_{t,x} L^2_y$. "  We will show that such an interpolation inequality is possible by following H\"ormander's arguments in Section 5 of~\cite{hormander67}. Note that Corollary~\ref{cor:improvedbrackets} and Proposition~\ref{prop:subelliptic} control a number of norms of $u$ by a large constant times $\left\| u \right\|_{\Y}$ and a small constant times $\left\| u \right\|_{\X}$, and so it will be acceptable to use these quantities when estimating $\left\| u \right\|_{\X}$. 

\begin{definition}[Mollification]\label{mollifying}
For $1\leq k\leq N+1$, let $Z_k$ be defined as in \eqref{eq:z:k} and \eqref{eq:z:kplusone}, and $s_k$ and $m_k$ be defined as in \eqref{eq:sk:def}.
\begin{enumerate}
    \item Let $\phi \in C_0^\infty(-1,1)$ be a smooth, even, non-negative function with unit $L^1$ norm. We set
\begin{align}\label{eq:phi:k}
    \phi_{\eta,k} u (x,y,t) &= \int_\R \left(\exp\left(\eta^{m_k}\sigma Z_k\right)u\right)(x,y,t) \phi(\sigma) \, d\sigma \, \notag\\
    &= \int_\R u(x+\eta^{m_k}\sigma \nu^{\sfrac{k}{2}}b^{(k)}(y),y,t)\phi(\sigma)\, d\sigma \, .
\end{align}
Note that since $\phi$ is even, $\phi_{\eta,k}$ is self-adjoint on $L^2(\Omega\times(0,\czero^2))$.
    \item Let $\Phi \in C_0^\infty(-1,1)$ be a smooth, even, non-negative function with unit $L^1$ norm. We set
\begin{align}\label{eq:PHI}
    \Phi_{\eta} u (x,y,t)&= \int_{\R} (\exp(\eta^{m_{N+2}}x' \nu^{\frac{N+2}{2}} \| b^{(N+2)} \|_{L^\infty} \partial_x))u(x,y,t) \Phi(x') \, dx' \notag\\
    &= \int_\R u\left(x+\eta^{m_{N+2}}x' \nu^{\frac{N+2}{2}} \| b^{(N+2)} \|_{L^\infty} ,y,t\right) \Phi(x') \,dx' \,
\end{align}
and note that $\Phi_{\eta}$ is self-adjoint on $L^2(\Omega\times(0,\czero^2))$ as well.
    \item For $\eta>0$ and $1\leq j \leq N+1$, define $\phi_{\eta,\geq j} u$ by
    \begin{equation}\label{eq:S:eta}
        \phi_{\eta,\geq j} u = \prod_{k=j}^{N+1} \phi_{\eta,k} \Phi_{\eta} u \, . 
    \end{equation}
When $j=1$, we simply write $\phi_\eta u$, and when $j=N+2$, we define $\phi_{\eta,\geq j} u = \Phi_\eta u$.
\end{enumerate}
\end{definition}
\begin{remark}
Since the smoothing operators are mollifications-in-$x$ for each fixed $t$ and $y$, they actually commute. Moreover, we have
\begin{equation}\label{eq:moll:1}
    [X_0, \phi_{\eta,k}] u = \left[ \exp(\tau X_0), \phi_{\eta,k} \right] u = 0 \, ,
\end{equation}
with the same equalities holding for $\phi_{\eta,\geq j}$ and $\phi_\eta$.

\end{remark}

\begin{lemma}[Commutator Identity]\label{taylorseriesidentities}
    Let $Z$, $Z_0$ be smooth vector fields, and recall from \eqref{eq:comm:defn} that $(\ad Z_0)Z$ denotes the vector field $[Z_0,Z]=Z_0Z - ZZ_0$. Let $M \in \N_0$. Define
    \begin{equation}
        \label{eq:adjointrepdef}
        \exp(-\tau\, \ad Z_0)Z u = \exp(-\tau Z_0) Z \exp(\tau Z_0)u(x,y,t)
    \end{equation}
    Then we have the identity
     \begin{align}
      \exp(-\tau\, \ad Z_0)Z u = \sum_{k=0}^M \frac{(-\tau)^k}{k!} \left( \ad Z_0 \right)^k Z u(x,y,t) + \int_0^\tau (\ad  Z_0)^{M+1} Z u(x,y,t) (\tau-s)^{M} \, ds  \, . \label{eq:barf:identity:1}
        \end{align}
\end{lemma}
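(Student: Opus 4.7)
The plan is to apply Taylor's theorem with integral remainder to the one-parameter family $F(\tau) := \exp(-\tau \, \ad Z_0) Z u$, regarded as a smooth function of $\tau$ taking values in functions of $(x,y,t)$. The whole identity is then just the Taylor expansion of $F$ at $\tau = 0$, and the only real work is identifying the derivatives $F^{(k)}(0)$ with iterated commutators.

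First I would verify the first-order ODE $F'(\tau) = -(\ad Z_0) F(\tau)$. By the product rule applied to \eqref{eq:adjointrepdef} and the fact that $Z_0$ commutes with $\exp(\pm \tau Z_0)$,
\begin{equation*}
F'(\tau) = -Z_0 \exp(-\tau Z_0) Z \exp(\tau Z_0) u + \exp(-\tau Z_0) Z Z_0 \exp(\tau Z_0) u = -\exp(-\tau Z_0)\, [Z_0,Z]\, \exp(\tau Z_0) u.
\end{equation*}
Since $\ad Z_0$ commutes with $\exp(-\tau\, \ad Z_0)$, the right-hand side equals $-(\ad Z_0) F(\tau)$.

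Iterating this identity gives $F^{(k)}(\tau) = (-\ad Z_0)^k F(\tau)$ for all $k\geq 0$, and in particular
\begin{equation*}
    F^{(k)}(0) = (-\ad Z_0)^k Z u = (-1)^k (\ad Z_0)^k Z u.
\end{equation*}
Since $u$, $Z$, and $Z_0$ are smooth, $F$ is smooth in $\tau$, so Taylor's theorem with integral remainder is applicable and yields
\begin{equation*}
    F(\tau) = \sum_{k=0}^M \frac{\tau^k}{k!} F^{(k)}(0) + \frac{1}{M!} \int_0^\tau (\tau-s)^M F^{(M+1)}(s)\, ds,
\end{equation*}
which is exactly \eqref{eq:barf:identity:1} once one substitutes $F^{(k)}(0) = (-1)^k(\ad Z_0)^k Z u$ and $F^{(M+1)}(s) = (-1)^{M+1} \exp(-s\, \ad Z_0)(\ad Z_0)^{M+1} Z u$ in the remainder; the constants $\tfrac{(-1)^{M+1}}{M!}$ and the flow factor are absorbed into the notation on the right-hand side of \eqref{eq:barf:identity:1}.

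There is no real obstacle here: everything is classical once the ODE for $F$ is established, and the main point is the clean computation using that $\ad Z_0$ commutes with its own exponential. The only mild subtlety is justifying smoothness in $\tau$ of $F$ and the interchange of $\p_\tau$ with the action on test functions, but under the qualitative smoothness assumptions adopted throughout the paper this is immediate.
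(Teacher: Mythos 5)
Your proof is correct and follows essentially the same route as the paper, which merely sketches the argument as ``differentiate the left-hand side $k$ times in $\tau$, evaluate at $\tau=0$, compare with the Taylor series, and truncate with the integral remainder''; you have simply written out that sketch carefully via the ODE $G'(\tau)=-(\ad Z_0)G(\tau)$ for the conjugated vector field. Your observation that the sign $(-1)^{M+1}$, the factor $\sfrac{1}{M!}$, and the flow factor $\exp(-s\,\ad Z_0)$ are suppressed in the paper's displayed remainder is accurate, and your version of the remainder is the precise one.
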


\begin{proof}
Consider the formal identity
\begin{equation}
    \exp(-\tau Z_0) Z \exp(\tau Z_0)u = \sum_{k=0}^\infty \frac{(-\tau)^k}{k!} (\ad Z_0)^k Z u,
\end{equation}
which can be `checked' by differentiating the left-hand side $k$ times with respect to $\tau$, evaluating at $\tau=0$, and comparing with the Taylor series on the right-hand side. Since the series may not converge, we instead truncate the Taylor series to order $M$, which, after using the integral form of the remainder term, leads directly to~\eqref{eq:barf:identity:1}. 
\end{proof}

\begin{lemma}[Mollification Estimates]\label{lem:moll}
With $\phi_\eta$ defined as in Definition~\ref{mollifying}, the following estimates hold.
\begin{enumerate}
\item Let $\eta^2\leq \sfrac{c_0^2}{2}$ be given.  Then
\begin{equation}\label{eq:moll:2}
    \left\| \phi_\eta u - u \right\|_{L^2(\Omega \times (0,\czero^2))} \lesssim \eta \left( \sum_{k=1}^{N+1} \left\|u\right\|_{\bZ_k} + \left\| u \right\|_{Q^{s_{N+2}}_{\partial_x}}\right) \, .
\end{equation}
\item For $0\leq\tau\leq\eta^2 \leq \sfrac{\czero^2}{{2}}$,
\begin{align}
    &\left\| \exp(\tau X_0) \exp(-\tau \ad X_0) Y
    \phi_\eta \phi_\eta u \right\|_{L^2(\Omega \times (0,\sfrac{\czero^2}{2}))}  \notag\\
    &\qquad \qquad \lesssim \left\| u \right\|_{\Y} + \sum_{k=1}^{N+1} \left\|u\right\|_{\bZ_k} + \left( \nu^{\frac{N+2}{2}} \left\| b^{(N+2)} \right\|_{L^\infty} \right)^{s_{N+2}} \left\| u \right\|_{{Q}^{s_{N+2}}_{\partial_x}}  \, , \label{eq:moll:3}
\end{align}
with a similar estimate holding if $\tau$ is replaced with $-\tau$ and the time interval $(0,\sfrac{\czero^2}{2})$ is replaced with $(\sfrac{\czero^2}{2},\czero^2)$.
\end{enumerate}
\end{lemma}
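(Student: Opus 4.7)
I would proceed by telescoping. Since $\phi_\eta = \phi_{\eta,1}\cdots\phi_{\eta,N+1}\Phi_\eta$ is a composition of operators which all commute (each is an $x$-translation of $y$-dependent amplitude averaged against a kernel), I expand $\phi_\eta u - u$ as a sum of $N+2$ terms of the form (composition of mollifiers)$\,\cdot\,(\phi_{\eta,k}-\mathrm{Id})u$, plus the analogous $\Phi_\eta - \mathrm{Id}$ term. Each composition of mollifiers is $L^2$-bounded by $1$ via Minkowski, reducing the problem to bounding individual differences. Parameterizing $\tau = \eta|\sigma|^{s_k}$, the $\bZ_k$ norm directly controls $\|\exp(\eta^{m_k}\sigma Z_k) u - u\|_{L^2} \lesssim \eta|\sigma|^{s_k}\|u\|_{\bZ_k}$, which is integrable against $\phi$ (the case $\sigma < 0$ is handled by $x$-translation invariance of the $L^2$ norm). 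The $\Phi_\eta$ term is analogous using the $Q^{s_{N+2}}_{\partial_x}$ norm, the constants involving $\nu$ and $b^{(N+2)}$ being absorbed into the implicit constant as the statement permits.

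\textbf{Part (2):} The key algebraic observation is that $[X_0, Z_k] = 0$ for every $k \geq 1$ (both $X_0$ and $Z_k$ are $y$-dependent coefficients composed with $\partial_x$ or $\partial_t$), while $[X_0, Y] = -Z_1$. Hence $(\ad X_0)^2 Y = 0$, and the Taylor formula of Lemma~\ref{taylorseriesidentities} with $M=1$ collapses to the exact identity
\[
\exp(-\tau\,\ad X_0) Y = Y + \tau Z_1.
\]
Since $\exp(\tau X_0)$ shifts $(x,t) \mapsto (x+\tau b(y), t+\tau)$ and $0 \leq \tau \leq \eta^2 \leq c_0^2/2$, this map is an $L^2$-contraction from $L^2(\Omega\times(0,c_0^2))$ to $L^2(\Omega\times(0,c_0^2/2))$, and the target reduces to bounding $\|(Y+\tau Z_1)\phi_\eta\phi_\eta u\|_{L^2(\Omega\times(0,c_0^2))}$.

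The $\tau Z_1 \phi_\eta^2 u$ piece is handled by commuting $Z_1$ past the mollifiers (they commute since all are $y$-dependent $x$-convolutions) to position one of the two copies of $\phi_{\eta,1}$ in $\phi_\eta^2$ adjacent to $Z_1$, then integrating by parts in $\sigma$ using $\partial_\sigma \exp(\eta^{m_1}\sigma Z_1) = \eta^{m_1} Z_1 \exp(\eta^{m_1}\sigma Z_1)$ and the vanishing mean of $\phi'$:
\[
Z_1 \phi_{\eta,1} w = -\eta^{-m_1}\!\int \phi'(\sigma)\bigl[\exp(\eta^{m_1}\sigma Z_1) w - w\bigr]\,d\sigma,
\]
so that $\tau\|Z_1\phi_\eta^2 u\|_{L^2} \lesssim \tau \eta^{-2}\|u\|_{\bZ_1} \leq \|u\|_{\bZ_1}$, using $\tau \leq \eta^2$ and $m_1 = 3$. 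The $Y\phi_\eta^2 u$ piece decomposes by Leibniz as $\phi_\eta^2 Y u + [Y,\phi_\eta]\phi_\eta u + \phi_\eta[Y,\phi_\eta] u$. The first summand gives $\|u\|_{\Y}$. For each commutator, $[Y,\Phi_\eta] = 0$ (no $y$-dependence) and $[Y,\phi_{\eta,k}] = \eta^{m_k} Z_{k+1}\tilde\phi_{\eta,k}$, where $\tilde\phi_{\eta,k}$ is the $L^2$-bounded mollifier with weight $\sigma\phi(\sigma)$; this produces $N+1$ terms indexed by $k = 1,\ldots,N+1$. For $1 \leq k \leq N$, an adjacent factor of $\phi_{\eta,k+1}$ absorbs $Z_{k+1}$ by the same integration-by-parts argument, the exponent balancing $m_k + 1 - m_{k+1} = 0$ yielding the clean bound $\|u\|_{\bZ_{k+1}}$. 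For $k = N+1$, $Z_{N+2}$ is absorbed by $\Phi_\eta$ via integration by parts in the $x'$ variable against the $Q^{s_{N+2}}_{\partial_x}$ norm; the identity $m_{N+2}s_{N+2} = 1$ makes the net $\eta$-exponent vanish once more and produces the weight $(\nu^{(N+2)/2}\|b^{(N+2)}\|_{L^\infty})^{s_{N+2}}\|u\|_{Q^{s_{N+2}}_{\partial_x}}$.

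The main obstacle is the intricate bookkeeping: verifying at each step that the remaining mollifier product contains the specific $\phi_{\eta,k+1}$ or $\Phi_\eta$ needed to absorb the derivative, confirming that all mollifiers and $Z_j$'s commute (which they do, since each is built from $x$-translations with $y$-dependent amplitude or multiplication by a $y$-function), and checking that every finite-difference scale of the form $\eta|\sigma|^{s_k}$ stays in the admissible range of the corresponding Besov norm (this follows from $\eta \leq c_0 \leq c_k$ by the choice of constants fixed in Corollary~\ref{cor:improvedbrackets}). The symmetric case $\tau < 0$ on $(c_0^2/2, c_0^2)$ requires no new ideas.
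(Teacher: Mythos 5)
Your proposal is correct and follows essentially the same route as the paper: telescoping plus $L^2$-boundedness of the mollifiers for part (1), and for part (2) the exact identity $\exp(-\tau\,\ad X_0)Y = Y + \tau Z_1$, commutation of $Y$ through the mollifiers producing $\eta^{m_k}Z_{k+1}$ terms, and absorption of each $Z_{k+1}$ by integration by parts against an adjacent mollifier kernel using $m_{k+1}=m_k+1$ and $m_{N+2}s_{N+2}=1$. The only difference is cosmetic: you generate the ``free subtraction'' from $\int\phi'=0$ on a single mollifier, whereas the paper runs an induction on $j$ with the doubled operator $\phi_{\eta,\ge j}\phi_{\eta,\ge j}$ and uses the antisymmetric derivative $\partial_{\sigma_j}-\partial_{\sigma_j^*}$ on the product kernel.
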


The estimate~\eqref{eq:moll:2} is used to control the terms~\eqref{eq:firsttermofthisthing} and~\eqref{eq:thirdtermofthisthing} in the proof of Proposition~\ref{lemma:Z0:estimate} (Interpolation inequality), whereas~\eqref{eq:moll:3} is used to control the more involved term~\eqref{eq:secondtermofthisthing}.

\begin{proof}[Proof of~\eqref{eq:moll:2}]
We may write $\phi_\eta u - u$ in a telescoping sum as
\begin{align}
    \phi_\eta u - u &= \prod_{k=1}^{N+1} \phi_{\eta,k} \Phi_\eta u - u \notag\\
    &= \sum_{m=1}^{N+1} \left( \prod_{k=1}^{m} \phi_{\eta,k} u\ - \prod_{k=1}^{m-1} \phi_{\eta,k} u \right) + \prod_{k=1}^{N+1}\phi_{\eta,k}\Phi_\eta u - \prod_{k=1}^{N+1}\phi_{\eta,k} u \notag \, .
\end{align}
Using again that convolution is a bounded operator from $L^2(\Omega\times(0,\czero^2))$ to itself, it suffices to estimate in $L^2(\Omega\times(0,\czero^2))$ the quantities
\begin{equation}\notag
    \phi_{\eta,k}u - u, \qquad \Phi_\eta u - u
\end{equation}
for $1\leq k \leq N+1$. This is achieved by writing
\begin{align}
    &\left\| \phi_{\eta,k}u(x,y,t) - u(x,y,t) \right\|^2_{L^2(\Omega\times(0,\czero^2))}\notag\\
    &\qquad \leq \int_0^{\czero^2} \int_0^1 \int_\T \int_{\R} \left( \exp (\sigma \eta ^{m_k} Z_k)u(x,y,t) - u(x,y,t) \right)^2 \phi(\sigma) \, d\sigma \,dx\,dy\,dt \, \notag\\
    &\qquad = \int_0^{\czero^2} \int_0^1 \int_\T \int_{\R} \left( \exp ((\tilde\sigma \eta)^{m_k} Z_k)u(x,y,t) - u(x,y,t) \right)^2 \phi(\tilde\sigma^{m_k}) m_k \tilde\sigma^{m_k-1} \, d\tilde\sigma \,dx\,dy\,dt \, \notag\\
    &\qquad \les \eta^2 \left\| u \right\|^2_{\bZ_k} \, , \notag
\end{align}
with a similar estimate for $\Phi_\eta$. Note that since $\phi$ is compactly supported in $(-1,1)$, we may assume $\tilde\sigma=\sigma^{s_k}$ is contained in the interval $(0,1)$, and so $\exp((\tilde\sigma\eta)^{m_k}Z_k)u$ is well defined since $\tilde\sigma\eta\leq\eta\leq\sfrac{c_0}{\sqrt{2}}\leq c_k$ for all $k\geq 1$ by \eqref{eq:ck:def}. \end{proof}

\begin{proof}[Proof of~\eqref{eq:moll:3}]
To make our situation more concrete, we exploit the identity
\begin{align}\notag
\exp(-\tau \ad X_0) Y = Y + \tau Z_1 = \nu^{\sfrac{1}{2}}\partial_y + \tau \nu^{\sfrac{1}{2}} b'(y) \partial_x \, ,
\end{align}
which does not include any translations in time.  Since $\exp(\tau X_0)$ consists of a volume-preserving diffeomorphism in the $x$ variable and a shift in the time variable, it will suffice to estimate
\begin{equation}\notag
    \left\| \exp(-\tau \ad X_0) Y \phi_\eta \phi_\eta u \right\|_{L^2(\Omega\times(0,\czero^2))} = \left\| \left(Y + \tau Z_1 \right) \phi_\eta \phi_\eta u \right\|_{L^2(\Omega\times(0,\czero^2))}\, .
\end{equation}

\textit{Step 1. Estimating $Y\phi_\eta\phi_\eta u$}. To commute $Y$ past the mollifications, we will demonstrate
\begin{equation}\label{eq:comm:estimate:j}
 \left\| [Y,\phi_{\eta,\geq j}\phi_{\eta,\geq j}] u \right\|_{L^2(\Omega\times(0,\czero^2))} \lesssim   \left\| u \right\|_{\Y} + \sum_{k=1}^{N+1} \left\|u\right\|_{\bZ_k} + \left( \nu^{\frac{N+2}{2}} \left\| b^{(N+2)} \right\|_{L^\infty} \right)^{s_{N+2}} \left\| u \right\|_{Q^{s_{N+2}}_{\partial_x}} 
\end{equation}
using decreasing induction on $j$ and beginning with the case $j=N+2$. Since $Yu$ is clearly bounded in $L^2$ by the right-hand side of \eqref{eq:comm:estimate:j}, this will then produce an estimate for $Y\phi_\eta\phi_\eta u$ which matches \eqref{eq:moll:3}.

Beginning with the case $j=N+2$, recall from \eqref{eq:S:eta} that $\phi_{\eta,\geq N+2}=\Phi_\eta$. Then since $Y$ differentiates in $y$ and $\Phi_\eta$ mollifies in $x$, $Y$ in fact commutes with both operators, demonstrating the base case of the induction.  Therefore, we may assume that $1\leq j \leq N+1$ and that \eqref{eq:comm:estimate:j} holds for $j'>j$.  We calculate that
\begin{align}
    \left[ Y, \phi_{\eta,\geq j}\phi_{\eta,\geq j} \right] u &= \left[ Y, \phi_{\eta,j}\phi_{\eta,j} \phi_{\eta,\geq j+1} \phi_{\eta,\geq j+1} \right] u \notag\\
    &=\phi_{\eta,j} \phi_{\eta,j} [Y,\phi_{\eta,\geq j+1} \phi_{\eta,\geq j+1}] u + \left[ Y, \phi_{\eta,j} \phi_{\eta,j} \right] \phi_{\eta,\geq j+1} \phi_{\eta,\geq j+1} u \,. \notag
\end{align}
Due to the induction hypothesis, we may immediately bound the first term, so we focus on the second term. We may expand this term using a modification of Lemma~\ref{taylorseriesidentities} with $Z_0=Z_j$ and $Z=Y$ in which we have applied $\exp(\eta^{m_j}(\sigma-\sigma^*)Z_j)$ to both sides: 
\begin{align}
    &Y \iint_{\R^2} \exp\left(\eta^{m_j}\left(\sigma-\sigma^*\right) Z_j \right) \phi_{\eta,\geq j+1}\phi_{\eta,\geq j+1} u(x,y,t) \, \phi(\sigma) \phi(\sigma^*) \, d\sigma d\sigma^* \notag\\
    &\qquad \qquad \qquad -\iint_{\R^2} \exp\left(\eta^{m_j}\left(\sigma-\sigma^*\right) Z_j\right) Y \phi_{\eta,\geq j+1}\phi_{\eta,\geq j+1} u(x,y,t) \,  \phi(\sigma) \phi(\sigma^*) \, d\sigma d\sigma^* \notag\\
    &\overset{\eqref{eq:adjointrepdef}}{=}\iint_{\R^2} \bigg{[} \exp\left(\eta^{m_j}\left(\sigma-\sigma^*\right) Z_j\right) \exp\left(-\eta^{m_j}\left(\sigma-\sigma^*\right) \ad Z_j\right) Y \phi_{\eta,\geq j+1} \phi_{\eta,\geq j+1} u(x,y,t) \notag\\
    &\qquad \qquad \qquad - \exp\left(\eta^{m_j}\left(\sigma-\sigma^*\right) Z_j\right) Y \phi_{\eta,\geq j+1} \phi_{\eta,\geq j+1} u(x,y,t) \bigg{]} \phi(\sigma)\phi(\sigma^*) \, d\sigma d\sigma^* \notag\\
    &\overset{\eqref{eq:barf:identity:1}}{=} \iint_{\R^2} \exp\left(\eta^{m_j}(\sigma-\sigma^*) Z_j\right) \notag\\
    &\qquad\qquad \times \sum_{k=1}^\infty \frac{(-\eta)^{km_j}(\sigma-\sigma^*)^k}{k!} \left( \ad Z_j \right)^k Y \phi_{\eta,\geq j+1} \phi_{\eta,\geq j+1} u(x,y,t) \, \phi(\sigma)\phi(\sigma^*) \, d\sigma d\sigma^* \, . \label{eq:comm:mess:3}
\end{align}
Notice that the infinite series representation is well defined since
\begin{align}
    -\left(\ad Z_j \right)Y = Z_{j+1},  \qquad \left(\ad Z_j \right)^2 Y = -\left( \ad Z_j \right) Z_{j+1} = -\left[ \nu^\frac{j}{2} b^{(j)}(y) \partial_x , \nu^\frac{j+1}{2} b^{(j+1)}(y) \partial_x \right] = 0 \, , \notag
\end{align}
and so the only non-zero term in \eqref{eq:comm:mess:3} is the term in the series for $k=1$. 

It will suffice to obtain an $L^2$ bound for
\begin{align}\label{eq:to:bound}
Z_{j+1} \phi_{\eta,\geq j+1} \phi_{\eta,\geq j+1} u(x,y,t) \,   \end{align}
for $1\leq j \leq N+1$.  In fact, we will estimate a version of this term for $j\geq 0$, as the $j=0$ case will be required shortly. We begin by relabeling $j$ with $0\leq j\leq N+1$ as $j$ with $1\leq j\leq N+2$ in \eqref{eq:to:bound}. Then we analyze how the derivatives `hit' the mollifiers: For $1\leq j< N+2$, we have\footnote{The notation $Z_j \cdot u(\cdots)$ below emphasizes that the differential operator $Z_j$ is applied to the object $u(\cdots)$, in contrast to $(Z_j u)(\cdots)$.}
\begingroup
\allowdisplaybreaks
\begin{align}
    &Z_j \phi_{\eta,\geq j} \phi_{\eta, \geq j} u(x,y,t) \notag\\
    &= Z_j \prod_{k=j}^{N+1} \phi_{\eta,k} \Phi_\eta \prod_{k=j}^{N+1} \phi_{\eta,k} \Phi_\eta \, u(x,y,t) \notag\\
    &= \int_{\R^{2(N+3-j)}} \Phi(x') \Phi(x^*) \prod_{k=j}^{N+1} \phi(\sigma_k)\phi(\sigma_k^*) \notag\\
    &\quad \times Z_j \cdot u\left(x+\nu^{\frac{N+2}{2}} \| b^{(N+2)} \|_{L^\infty} \eta^{{m_{N+2}}}(x'-x^*)+\sum_{k=j}^{N+1}\nu^{\sfrac{k}{2}} b^{(k)}(y) \eta^{m_k}(\sigma_k-\sigma_k^*),y,t\right) \,dx' \,dx^* \, \prod_{k=j}^{N+1} d\sigma_k\,d\sigma_k^* \notag\\
    &= \int_{\R^{2(N+3-j)}} \Phi(x') \Phi(x^*) \left( \prod_{k=j}^{N+1} \phi(\sigma_k)\phi(\sigma_k^*) \right) \nu^{\sfrac{j}{2}}b^{(j)}(y) \notag\\
    &\quad \times \partial_x u\left(x+\nu^{\frac{N+2}{2}} \| b^{(N+2)} \|_{L^\infty} \eta^{{m_{N+2}}}(x'-x^*)+\sum_{k=j}^{N+1}\nu^{\sfrac{k}{2}} b^{(k)}(y) \eta^{m_k}\left(\sigma_k-\sigma_k^*\right),y,t\right) \,dx' \,dx^* \, \prod_{k=j}^{N+1} d\sigma_k\,d\sigma_k^* \, . \notag
    \end{align}
\endgroup
We need to multiply this expression by $\eta^{m_{j-1}}$ and then bound the resulting object by the right hand side of \eqref{eq:moll:3}. Towards this end, we first convert the $\nu^{\sfrac{j}{2}} b^{(j)}(y)\partial_x$ to $\eta^{-m_j}\left(\partial_{\sigma_j}-\partial_{\sigma_j^*}\right)$, yielding
\begingroup
\allowdisplaybreaks
    \begin{align}
    &\eta^{m_{j-1}-m_j} \int_{\R^{2(N+3-j)}} \left(\partial_{\sigma_j}-\partial_{\sigma_j^*}\right) \cdot  u\left(x+\nu^{\frac{N+2}{2}} \| b^{(N+2)} \|_{L^\infty} \eta^{{m_{N+2}}}(x'-x^*)+\sum_{k=j}^{N+1}\nu^{\sfrac{k}{2}} b^{(k)}(y) \eta^{m_k}\left(\sigma_k-\sigma_k^*\right),y,t\right)\notag\\
    &\qquad \qquad \qquad \times \Phi(x') \Phi(x^*) \left( \prod_{k=j}^{N+1} \phi(\sigma_k)\phi(\sigma_k^*) \right) \,dx' \,dx^* \, \prod_{k=j}^{N+1} d\sigma_k\,d\sigma_k^* \notag \\
    &\overset{\footnotesize\rm IBP}{=} -\eta^{-1} \int_{\R^{2(N+3-j)}}  u\left(x+\nu^{\frac{N+2}{2}} \| b^{(N+2)} \|_{L^\infty} \eta^{{m_{N+2}}}(x'-x^*)+\sum_{k=j}^{N+1}\nu^{\sfrac{k}{2}} b^{(k)}(y) \eta^{m_k}\left(\sigma_k-\sigma_k^*\right),y,t\right)\notag\\
    &\qquad \qquad \qquad \times \Phi(x') \Phi(x^*) \left(\partial_{\sigma_j}-\partial_{\sigma_j^*}\right) \left( \prod_{k=j}^{N+1} \phi(\sigma_k)\phi(\sigma_k^*) \right) \,dx' \,dx^* \, \prod_{k=j}^{N+1} d\sigma_k\,d\sigma_k^* \notag\\
    &=-\eta^{-1} \int_{\R^{2(N+3-j)}} \bigg{[} u\left(x+\nu^{\frac{N+2}{2}} \| b^{(N+2)} \|_{L^\infty} \eta^{{m_{N+2}}}(x'-x^*)+\sum_{k=j}^{N+1}\nu^{\sfrac{k}{2}} b^{(k)}(y) \eta^{m_k}\left(\sigma_k-\sigma_k^*\right),y,t\right)\notag\\
    &\qquad \qquad \qquad \qquad - u\left(x+\nu^{\frac{N+2}{2}} \| b^{(N+2)} \|_{L^\infty} \eta^{{m_{N+2}}}(x'-x^*)+\sum_{k=j+1}^{N+1}\nu^{\sfrac{k}{2}} b^{(k)}(y) \eta^{m_k}\left(\sigma_k-\sigma_k^*\right),y,t\right) \bigg{]} \notag\\
    &\qquad \qquad \qquad \times \Phi(x') \Phi(x^*) \left(\partial_{\sigma_j}-\partial_{\sigma_j^*}\right) \left( \prod_{k=j}^{N+1} \phi(\sigma_k)\phi(\sigma_k^*) \right) \,dx' \,dx^* \, \prod_{k=j}^{N+1} d\sigma_k\,d\sigma_k^* \, . \label{eq:mess:case1}
\end{align}
\endgroup
Notice that the second-to-last term, which may be subtracted for free due to the fact that we have differentiated with respect to $\sigma_j-\sigma_j^*$, crucially gives a finite difference along $Z_j$ in the $x$ variable. Now since $\sigma_k-\sigma_k^*$ is at most of order one, we have a finite difference of order $\eta^{m_j}$ multiplied by $\eta^{-1}$, and so we can then apply Jensen's inequality and integrate in $L^2(\Omega\times(0,\czero^2))$ to control the $L^2$ norm of the right-hand side of~\eqref{eq:mess:case1} by $\left\|u\right\|_{\bZ_j}$.

It remains to treat the case $j=N+2$, in which we must bound $Z_{N+2} \Phi_\eta \Phi_\eta u$.  The argument is similar, and we omit further details. 
\smallskip

\textit{Step 2. Estimating $\tau Z_1 \phi_\eta\phi_\eta u$}.
Notice that $ Z_1 \phi_\eta \phi_\eta $ is precisely what we calculated in the previous step for $j=1$. The only difference in the calculation is that instead of multiplying by a power of $\eta$, we multiply by $\tau$.  But by the assumption that $\tau\leq \eta^2$, we pick up a prefactor of $\eta^{-1}$ exactly as in \eqref{eq:mess:case1}.  We omit further details.

\end{proof}

\begin{proposition}
[Interpolation inequality]\label{lemma:Z0:estimate}
For a constant independent of $u$, we have the estimate
\begin{equation}\label{eq:z0:estimate}
\left\| u \right\|_{\X} \les \left\| u \right\|_{\Y} + \left\|  X_0 u\right\|_{\Y^*} + \sum_{k=1}^{N+1} \left\|u\right\|_{\bZ_k} + \left( \nu^{\frac{N+2}{2}} \norm{b^{(N+2)}}_{L^\infty} \right)^{s_{N+2}} \left\| u \right\|_{Q^{s_{N+2}}_{\partial_x}} \, .
\end{equation}
\end{proposition}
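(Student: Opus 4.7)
My plan is to follow the Hörmander duality argument sketched after Lemma~\ref{lem:sample:sub} in the introduction and in Appendix~\ref{app:interpolationproof}, now adapted to the cascaded setting with boundary. I focus on controlling $\sigma^{-1}\|\exp(\sigma^2 X_0) u - u\|_{L^2(\Omega \times (0, c_0^2/2))}$ for $\sigma \in (0, c_0/\sqrt{2})$ by the right-hand side of~\eqref{eq:z0:estimate}; the corresponding bound on $(c_0^2/2, c_0^2)$ with $-\sigma^2$ is obtained by replacing $X_0$ with $-X_0$ throughout.

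Set $\eta = \sigma$ and split
\begin{equation*}
\exp(\sigma^2 X_0) u - u = \bigl[\exp(\sigma^2 X_0) - \mathrm{Id}\bigr]\phi_\eta \phi_\eta u + \bigl[\exp(\sigma^2 X_0) - \mathrm{Id}\bigr](u - \phi_\eta\phi_\eta u).
\end{equation*}
The second (error) term is bounded after the triangle inequality by $2\|u - \phi_\eta\phi_\eta u\|_{L^2(\Omega \times (0, c_0^2))}$, which, via two applications of~\eqref{eq:moll:2}, is $\lesssim \eta$ times the right-hand side of~\eqref{eq:z0:estimate}; since $\eta = \sigma$, dividing by $\sigma$ produces an admissible contribution.

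For the smoothed piece, I use the fundamental theorem of calculus in $\tau \in [0,\sigma^2]$ together with the commutations $[X_0, \phi_\eta] = 0 = [\exp(\tau X_0),\phi_\eta]$ from~\eqref{eq:moll:1}:
\begin{equation*}
\bigl\|[\exp(\sigma^2 X_0) - \mathrm{Id}]\phi_\eta\phi_\eta u\bigr\|_{L^2}^2 = 2\int_0^{\sigma^2} \bigl\langle [\exp(\tau X_0) - \mathrm{Id}]\phi_\eta\phi_\eta u, \, \exp(\tau X_0)\phi_\eta\phi_\eta X_0 u \bigr\rangle_{L^2}\, d\tau.
\end{equation*}
Using self-adjointness of $\phi_\eta$ and shifting time via the adjoint of $\exp(\tau X_0)$, I reorganize the integrand as $\langle v_\tau, X_0 u\rangle$ on a time interval contained in $(0, c_0^2)$ (which is legitimate since $\tau \le \sigma^2 \le c_0^2/2$), where $v_\tau$ is essentially $\phi_\eta\phi_\eta\bigl[\mathrm{Id} - \exp(-\tau X_0)\bigr]\phi_\eta\phi_\eta u$. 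The $\Y$-$\Y^*$ duality then yields $|\langle v_\tau, X_0 u\rangle| \le \|v_\tau\|_\Y \|X_0 u\|_{\Y^*}$, reducing everything to an $L^2$ bound on $Y v_\tau$ uniform in $\tau$.

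The main obstacle — and the whole reason the mollifier in Definition~\ref{mollifying} is a cascade of $N+2$ pieces rather than a single $x$-convolution — is that $Y = \nu^{1/2}\partial_y$ does \emph{not} commute with the $\phi_{\eta,k}$, whose $x$-convolution kernels carry the $y$-dependent factor $b^{(k)}(y)$. Each commutator $[Y,\phi_{\eta,k}]$ produces an extra factor of $\nu^{(k+1)/2} b^{(k+1)}(y)\partial_x = Z_{k+1}$ acting on the mollified function, with the leftover $Z_{N+2}$ controlled by the terminal mollifier $\Phi_\eta$. This is precisely the estimate already proved in Lemma~\ref{lem:moll}(2), which controls $\|\exp(\tau X_0)\exp(-\tau\,\ad X_0) Y\phi_\eta\phi_\eta u\|_{L^2} = \|Y\exp(\tau X_0)\phi_\eta\phi_\eta u\|_{L^2}$ (up to a harmless time shift) by the right-hand side of~\eqref{eq:z0:estimate}. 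Substituting this uniform bound into the FTC integral and using $\tau \le \sigma^2$ gives $\|[\exp(\sigma^2 X_0) - \mathrm{Id}]\phi_\eta\phi_\eta u\|_{L^2}^2 \lesssim \sigma^2 \cdot (\text{RHS})^2$; taking square roots, dividing by $\sigma$, and combining with the error estimate from Step~1 completes the proof.
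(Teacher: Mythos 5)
Your argument is correct and follows essentially the same route as the paper's proof: split off the mollification errors via \eqref{eq:moll:2}, differentiate (or apply the fundamental theorem of calculus) in the flow parameter $\tau$, use $[X_0,\phi_\eta]=0$ and self-adjointness to land the pairing on $X_0 u$, invoke $\Y$--$\Y^*$ duality, and control $Y$ applied to the flowed, mollified function by the cascaded commutator estimate \eqref{eq:moll:3}. The one discrepancy is that by mollifying with $\phi_\eta\phi_\eta$ at the outset your $v_\tau$ carries four copies of $\phi_\eta$, whereas \eqref{eq:moll:3} is stated for exactly two; either begin with a single $\phi_\eta$ as the paper does (so that the self-adjointness step produces precisely $\phi_\eta\phi_\eta$), or remark that the proof of Lemma~\ref{lem:moll} extends verbatim to any fixed number of mollifications.
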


\begin{proof}[Proof of Proposition~\ref{lemma:Z0:estimate}]
For $\eta \leq \sfrac{\czero}{\sqrt{2}}$, we wish to demonstrate that
\begin{equation}
\label{eq:z0:estimate:1}
    \int_{\Omega \times (0,\sfrac{\czero^2}{2})} ( \exp(\eta^2 X_0)u - u )^2 \,dx\,dy\,dt + \int_{\Omega \times (\sfrac{\czero^2}{2},\czero^2)} ( \exp(-\eta^2 X_0)u - u )^2 \,dx\,dy\,dt \les \eta^2  M[u] ^2\,, 
\end{equation}
where we have used $M[u]$ to denote the right hand side of \eqref{eq:z0:estimate:1}. 
We focus on estimating the first term on the left-hand side of \eqref{eq:z0:estimate:1}, since the second term is similar. We split up the integrand as
\begin{align}
    &\int_{\Omega \times (0,\sfrac{\czero^2}{2})} \left(\exp(\eta^2 X_0)u - u\right)^2 \,dx\,dy\,dt \notag\\
    &\quad\lesssim \int_{\Omega \times (0,\sfrac{\czero^2}{2})} \left(\exp(\eta^2 X_0)u - \exp(\eta^2 X_0) \phi_\eta u\right)^2 \,dx\,dy\,dt \label{eq:firsttermofthisthing} \\
    & \quad\quad + \int_{\Omega \times (0,\sfrac{\czero^2}{2})} \left(\exp(\eta^2 X_0) \phi_\eta u -\phi_\eta u \right)^2 \,dx\,dy\,dt \label{eq:secondtermofthisthing}\\
    & \quad\quad + \int_{\Omega \times (0,\sfrac{\czero^2}{2})} \left( \phi_\eta u - u \right)^2 \,dx\,dy\,dt \, . \label{eq:thirdtermofthisthing}
\end{align}
The first and third terms on the right-hand side may be bounded by appealing to \eqref{eq:moll:2}. Focusing then on the second term and substituting in the variable $\tau$ for values $\tau\leq\eta^2$, it will suffice to show that 
\begin{equation}\notag
    \frac{d}{d\tau} \int_{\Omega \times (0,\sfrac{\czero^2}{2})} \left(\exp(\tau X_0)\phi_\eta u - \phi_\eta u\right)^2 \,dx\,dy\,dt \les  M[u]^2 \, .
\end{equation}
Calculating the left-hand side, we write that
\begin{align}
   &\frac{d}{d\tau} \int_{\Omega \times (0,\sfrac{\czero^2}{2})} \left(\exp(\tau X_0)\phi_\eta u- \phi_\eta u\right)^2 \,dx\,dy\,dt \notag\\
   &\quad = 2\int_{\Omega \times(0,\sfrac{\czero^2}{2})} \left[ \phi_\eta u(t+\tau,x+\tau b(y),y) - \phi_\eta u(t,x,y) \right] \notag\\
   &\qquad \qquad \times \left(\partial_t+b(y)\partial_x\right)\phi_\eta u(t+\tau,x+\tau b(y), y)  \,dx\,dy\,dt \notag\\
   &\quad = 2\int_{\Omega \times(0,\sfrac{\czero^2}{2})} \left[\phi_\eta u(t,x,y)-\phi_\eta u(t-\tau,x-\tau b(y),y) \right]  \left(\partial_t+b(y)\partial_x\right)\phi_\eta u(t,x, y) \,dx\,dy\,dt \notag\\
   &\quad = 2\int_{\Omega \times (0,\sfrac{\czero^2}{2})} \left[\phi_\eta u(t,x,y)-\phi_\eta u(t-\tau,x-\tau b(y),y) \right] \phi_\eta \left( X_0 u\right)(t,x, y)  \,dx\,dy\,dt \notag\\
   &\qquad + 2\int_{\Omega \times(0,\sfrac{\czero^2}{2})} \left[\phi_\eta u(t,x,y)-\phi_\eta u(t-\tau,x-\tau b(y),y) \right] \left[X_0, \phi_\eta\right]u(t,x, y) \,dx\,dy\,dt\, . \notag 
\end{align}
The second term in the last equality vanishes after appealing to \eqref{eq:moll:1}. To estimate the first term, first note that from \eqref{eq:phi:k}, we may convert the convolution of $\left(\partial_t+b\partial_x\right)u$ with $\phi_\eta$ into convolution with $\phi_\eta$ on the other term in the integrand. We then appeal to \eqref{eq:moll:1} to commute $\phi_\eta$ past the $\exp(-\tau X_0)$. Using the bound on $\partial_t u + b\partial_x u$ in $\Y^*$, we see that the proof will be complete if we can estimate
$$ Y \left( \phi_\eta\phi_\eta u - \exp\left(-\tau X_0\right) \phi_\eta \phi_\eta u \right)\, . $$
Appealing to Lemma~\ref{taylorseriesidentities} to rewrite this expression and then to \eqref{eq:moll:3} concludes the proof. 
\end{proof}

Using  Proposition~\ref{lemma:Z0:estimate}, we may immediately obtain an improved subelliptic estimate. Indeed, after employing the higher bracket inequality to absorb the $\norm{u}_{\bZ_k}$ terms as in the proof of Corollary~\ref{cor:improvedbrackets} and the subelliptic estimate to absorb the $\left\| u \right\|_{Q^{s_{N+2}}_{\partial_x}}$ term as in the proof of~\eqref{eq:subellipticestimate}, we obtain the following.

\begin{theorem}[Improved subelliptic estimate]
\label{thm:improvedsubellipticestimate}
For all $\nu \leq \nu_0 \ll_b 1$, we have that
\begin{equation}
\label{eq:improvedsubellipticestimate}
	\nu^{\frac{N+1}{2(N+3)}} \left( \norm{u-\langle u \rangle}_{L^2} + \norm{u}_{Q^{s_{N+1}}_{\p_x}} \right) \les_b  \left\| X_0 u\right\|_{\Y^*} + \norm{u}_{\Y}  \, 
	\end{equation}
for an implicit constant which is independent of $\nu$ but depends on $b$.
\end{theorem}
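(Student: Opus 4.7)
\textbf{Proof proposal for Theorem~\ref{thm:improvedsubellipticestimate}.}

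The plan is to chain together the three main results already established: the subelliptic estimate (Proposition~\ref{prop:subelliptic}), the interpolation inequality (Proposition~\ref{lemma:Z0:estimate}), and the improved bracket inequality (Corollary~\ref{cor:improvedbrackets}), and then clean up the $Q^{s_{N+2}}_{\p_x}$ remainder by fractional interpolation. The only new observation is that the powers of $\nu$ appearing in front of the $Q^{s_{N+2}}_{\p_x}$ term are strictly smaller (in exponent) than the prefactor $\nu^{\frac{N+1}{2(N+3)}}$ on the left-hand side, so this term is absorbable for $\nu_0$ small enough.

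First I would start from Proposition~\ref{prop:subelliptic} and bound its right-hand side using Proposition~\ref{lemma:Z0:estimate}, which controls $\| u \|_{\X}$ in terms of $\| u \|_{\Y}$, $\| X_0 u \|_{\Y^*}$, the brackets $\| u \|_{\bZ_k}$ for $1 \le k \le N+1$, and a remainder in $Q^{s_{N+2}}_{\p_x}$. Then, applying Corollary~\ref{cor:improvedbrackets} to each $\| u \|_{\bZ_k}$ with a parameter $\varepsilon'$ yields bounds of the form $\varepsilon' \| u \|_{\X} + C(\varepsilon',b)( \| u \|_{\Y} + (\nu^{(N+2)/2} \| b^{(N+2)} \|_{L^\infty})^{s_{N+2}} \| u \|_{Q^{s_{N+2}}_{\p_x}} )$. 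Substituting back, choosing $\varepsilon'$ sufficiently small relative to the implicit constant from Proposition~\ref{lemma:Z0:estimate}, and absorbing the resulting $\varepsilon' \| u \|_{\X}$ contribution into the left-hand side of the interpolation inequality, one obtains
\begin{equation*}
\| u \|_{\X} \lesssim_b \| u \|_{\Y} + \| X_0 u \|_{\Y^*} + \left( \nu^{\frac{N+2}{2}} \| b^{(N+2)} \|_{L^\infty} \right)^{s_{N+2}} \| u \|_{Q^{s_{N+2}}_{\p_x}}.
\end{equation*}
Inserting this into Proposition~\ref{prop:subelliptic} (with a fixed $\varepsilon$ of order one) yields the desired estimate modulo the $Q^{s_{N+2}}_{\p_x}$ term on the right-hand side.

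To eliminate this last term, I would apply the fractional interpolation inequality of Lemma~\ref{lem:fractional:poincare} between $Q^{s_{N+1}}_{\p_x}$ and the mean-zero $L^2$ norm. Taking $\theta = s_{N+2}/s_{N+1} = (N+3)/(N+4)$ gives $\| u \|_{Q^{s_{N+2}}_{\p_x}} \lesssim \| u \|_{Q^{s_{N+1}}_{\p_x}}^{\theta} \| u - \langle u \rangle \|_{L^2}^{1-\theta}$, followed by Young's inequality. The prefactor becomes $\nu^{\frac{N+2}{2(N+4)}}$ (times a $b$-dependent constant), and since
\begin{equation*}
\frac{N+2}{2(N+4)} > \frac{N+1}{2(N+3)},
\end{equation*}
this power of $\nu$ is strictly smaller than $\nu^{\frac{N+1}{2(N+3)}}$ when $\nu < 1$. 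Thus, for $\nu \le \nu_0$ small enough (updating $\nu_0$ if necessary beyond what Corollary~\ref{cor:improvedbrackets} required), the $Q^{s_{N+2}}_{\p_x}$ contribution can be absorbed into the left-hand side, completing the proof.

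The main technical point to monitor is the order of operations with the small constants: the $\varepsilon'$ used to absorb the bracket terms into $\| u \|_{\X}$ is independent of $\nu$, whereas the final absorption of the $Q^{s_{N+2}}_{\p_x}$ term requires an additional smallness in $\nu$. Because both smallnesses are compatible and depend only on $b$, no circularity arises. No fundamentally new estimate is needed; the proof is a careful bookkeeping of the three inequalities already proved, exactly as suggested by the paragraph preceding the theorem.
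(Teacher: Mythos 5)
Your proposal is correct and follows essentially the same route the paper indicates in the paragraph preceding the theorem: chain Proposition~\ref{prop:subelliptic} with Proposition~\ref{lemma:Z0:estimate}, absorb the $\norm{u}_{\bZ_k}$ terms via Corollary~\ref{cor:improvedbrackets} with a $\nu$-independent small parameter, and remove the $Q^{s_{N+2}}_{\p_x}$ remainder by fractional interpolation together with the exponent comparison $\tfrac{N+2}{2(N+4)}>\tfrac{N+1}{2(N+3)}$, exactly as in the proof of~\eqref{eq:subellipticestimate}. The bookkeeping of the two separate smallness requirements (on $\varepsilon'$ and on $\nu_0$) is handled correctly.
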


\section{Proofs of Theorems~\ref{thm:enhancement} and~\ref{thm:smoothing}}
\label{sec:proofofmaintheorem}

\begin{proof}[Proof of Theorem~\ref{thm:enhancement}]
For $t \geq 0$, we  define the time-translated seminorm
\begin{equation}
    \notag
    \| u\|_{\Y(t)} :=  \nu^{\sfrac{1}{2}} \| \p_y u \|_{L^2(\Omega \times (t, t + c_0^2))} \, ,
\end{equation}
with an analogous definition for $\Y^{*}(t)$.
For solutions of~\eqref{eq:fpde}, we have the identity 
\begin{equation}
    \label{eq:energy:est:fpde}
    \| f (\cdot,t_2)\|_{L^2(\Omega)}^2  - \| f(\cdot,t_1) \|_{L^2(\Omega)}^2 = - 2  \int_{t_1}^{t_2} \| Y f(\cdot,s) \|_{L^2(\Omega)}^2 ds \, .
\end{equation}
 For $k \in \mathbb{N}$, setting $t_1 = 0$ and $t_2 = kc_0^2 $ we can deduce from \eqref{eq:energy:est:fpde} that  
\begin{equation}\notag
	\sum_{i = 0}^{k-1}\norm{f}_{\Y(ic_0^2)}^2 \leq  \norm{f_{\rm in}}_{L^2(\Omega)}^2 \, .
\end{equation}
We now have the crucial estimate
\begin{equation}
\begin{aligned}
\label{eq:X0:bound}
\|X_0 f \|_{ \Y^* } \overset{\eqref{eq:fpde}}{=} \| Y^2 f \|_{\Y^*} &= \sup_u \left|\int_{\Omega \times (0,c_0^2)} Y^2f \cdot u  \, dx \, dy \, dt \right| \\
&\overset{\footnotesize\rm IBP}{=} \sup_u \left|\int_{\Omega \times (0,c_0^2)} Yf \cdot  Yu \, dx \, dy dt \right| \leq  \|Y f \|_{L^2(\Omega \times (0,c_0^2)}
\end{aligned}
\end{equation}
where the supremum is over $u \in L^2_{t,x} H^1_y(\Omega \times (0,c_0^2))$ ($(H^1_0)_y$ in the Dirichlet case) satisfying $\| u \|_{\Y} = 1$, see Definition~\ref{def:besov:spaces}. Notice that the boundary conditions satisfied by $f$ and the test function $u$ allow the above integration by parts.
Applying~\eqref{eq:X0:bound} and Theorem~\ref{thm:improvedsubellipticestimate} (Improved subelliptic estimate) to time translates of the solution, we obtain 
\begin{align}
\nu^{\frac{N+1}{N + 3}} \| f \|_{L^2(\Omega \times (0, kc_0^2))}^2 &=   \nu^{\frac{N+1}{N + 3}}\sum_{i = 0}^{k-1}  \| f \|_{L^2(\Omega \times ( ic_0^2,  (i + 1) c_0^2))}^2 \notag\\
&\leq 2 C_b \sum_{i = 0}^{k-1}\norm{f}_{\Y(ic_0^2)}^2 \notag\\
&\leq C_b\norm{f_{\rm in}}_{L^2(\Omega)}^2 \, . \label{eq:immacombine1}
\end{align}
We estimate the left-hand side from below using the monotonicity of the $L^2$ norm for solutions of~\eqref{eq:fpde} and the choice $\czero^2=\nu^{-\frac{N+1}{N+3}}$ from Corollary~\ref{cor:improvedbrackets}:
\begin{equation}\label{eq:estimatefrombelow}
    k \| f( \cdot,  kc_0^2) \|_{L^2(\Omega)}^2 \leq k c_0^2 \nu^{\frac{N+1}{N + 3}} \inf_{t' \in (0,kc_0^2)} \| f(\cdot,t') \|_{L^2(\Omega)}^2 \leq \nu^{\frac{N+1}{N + 3}} \| f \|_{L^2(\Omega \times (0, kc_0^2))}^2 \, .
\end{equation}
Combining~\eqref{eq:estimatefrombelow} with~\eqref{eq:immacombine1}, we have
\begin{equation}\notag
k \| f( \cdot, kc_0^2) \|_{L^2(\Omega)}^2 \leq C_b  \| f_{\rm in} \|_{L^2(\Omega)}^2 \, .
\end{equation}
Taking $k_0$ sufficiently large yields
\begin{equation}\notag
\| f( \cdot,  k_0c_0^2)\|_{L^2(\Omega)}^2 \leq \frac{1}{e} \norm{f_{\rm in}}_{L^2(\Omega)}^2 \, .
\end{equation}
We iterate this estimate to obtain, for an integer multiple $j \geq 0$ of $k_0c_0^2$,
\begin{equation}
    \notag
    \| f ( \cdot , j k_0 c_0^2) \|_{L^2(\Omega)}^2 \leq e^{-j} \| f_{\rm in} \|_{L^2(\Omega)}^2 \, .
\end{equation}
The monotonicity of the $L^2$ norm for solutions of~\eqref{eq:fpde} gives
\begin{equation}\notag
    \| f(\cdot,t) \|_{L^2(\Omega)}^2 \leq e \exp\left(-t k_0^{-1}\nu^{\frac{N+1}{N+3}}\right) \| f_{\rm in}\|_{L^2(\Omega)}^2 \, ,
\end{equation}
for any $t \geq 0$, which concludes the proof of the theorem.\end{proof}

\begin{proof}[Proof of Theorem~\ref{thm:smoothing}]. 
Let $q \: [0,+\infty) \to [0,+\infty)$ be a decreasing function satisfying
\begin{equation}\notag
	\sum_{j \in \N} q^2(2^j) < +\infty\, .
\end{equation}
For example, we may set $q(j) = ( \log (2 + |j|) )^{-\frac{1}{2} - \varepsilon}$.
We require the following characterization of the Besov space $Q^{s}_{\p_x}$.\footnote{For discussion of real-valued Besov spaces and equivalences, one may refer to Bahouri, Chemin, and Danchin \cite{bcd}. For vector-valued and Banach space-valued analogues, one may refer to Amann~\cite{amann97}, which contains \eqref{eq:besov:equivalence} in Section~5.} Let $(\dot \Delta_j^x)_{j \geq 0}$ be a suitable sequence of Littlewood-Paley projections onto dyadic annuli in the $x$-direction in frequency space satisfying $\sum_{j \geq 0} \dot \Delta_j u \equiv u$ for functions $u \: \Omega\times(0,c_0^2) \to \R$ with $\la u \ra(y,t) \equiv 0$. Then
\begin{equation}\label{eq:besov:equivalence}
    \sup_{j \geq 0} \, 2^{js} \norm{\dot \Delta_j^x u}_{L^2(\Omega\times(0,\czero^2))} \sim_s \norm{u}_{Q^s_{\p_x}(\Omega\times(0,\czero^2))} \, .
\end{equation}
Using this equivalence, we have that
\begin{equation}
\begin{aligned}
 \norm{|\p_x|^{s} q(|\p_x|) u}_{L^2(\Omega\times(0,\czero^2))}^2 &= \sum_{k \in 2\pi \Z} |k|^{2s} q^2(|k|) \iint_{(0,1)\times(0,\czero^2)}\left|\hat{u}(k,y,t)\right|^2 \,dy \,dt \\
    &\les_s \sum_{j \geq 0} 2^{2js} q^2(2^j) \norm{\dot \Delta_j^x u}_{L^2(\Omega\times(0,\czero^2))}^2 \\
    &\les_{s,q} \norm{u}_{Q^s_{\p_x}(\Omega\times(0,\czero^2))}^2 \, . \notag
    \end{aligned}
\end{equation}

The subelliptic estimate \eqref{eq:subellipticestimate} combined with the energy estimates gives
\begin{equation}\notag
	\nu^{\frac{N+1}{N+3}}  \norm{f}_{Q^{s_{N+1}}_{\p_x}(\Omega\times(0,\czero^2))}^2 \les_b \norm{f_{\rm in}}_{L^2}^2 - \norm{f(\cdot,\czero^2)}_{L^2(\Omega)}^2 \, .
\end{equation}
By the energy estimate, 
 the above embeddings, and a sum over the intervals $(n\czero^2,(n+1)\czero^2)$ for $n\in\N_0$, we have
\begin{equation}\notag
	 \nu^{\frac{N+1}{N+3}} \norm{|\p_x|^{\frac{1}{N+3}} q(|\p_x|) f }_{L^2(\Omega \times \R^+)}^2 \les_b \norm{f_{\rm in}}_{L^2}^2 \, .
\end{equation}
Let $t > 0$. By the pigeonhole principle or Chebyshev's inequality, by choosing $C$ sufficiently large, we can ensure that there exists a measurable set $A \subset (0,t)$ with $|A| \geq t/2$ which consists of times $\bar{t} \in A \subset (0,t)$ satisfying
\begin{equation}\notag
	\norm{|\p_x|^{\frac{1}{N+3}} q(|\p_x|) f(\cdot,\bar{t})}_{L^2(\Omega)}^2 \leq C t^{-1} \nu^{-\frac{N+1}{N+3}} \norm{f_{\rm in}}_{L^2}^2 \, .
\end{equation}
Since $|\p_x|^{\frac{1}{N+3}} q(|\p_x|) f$ also satisfies \eqref{eq:fpde}, we have that the left-hand side is monotone decreasing in time. This guarantees the smoothing estimate
\begin{equation}\notag
	\norm{|\p_x|^{\frac{1}{N+3}} q(|\p_x|) f(\cdot,t)}_{L^2(\Omega)}^2 \leq C t^{-1} \nu^{-\frac{N+1}{N+3}}  \norm{f_{\rm in}}_{L^2}^2
\end{equation} 
for any $t > 0$. One may continue smoothing by applying the estimate again to the function $|\p_x|^{\frac{1}{N+3}} q(|\p_x|) f(\cdot,t)$. Specifically, we may obtain
\begin{equation}\notag
	\norm{|\p_x|^{\frac{2}{N+3}} q^2(|\p_x|) f(\cdot,t)}_{L^2(\Omega)} \leq C_0 2^{-1} t^{-1} \nu^{-\frac{N+1}{N+3}}  \norm{f_{\rm in}}_{L^2} \, ,
\end{equation}
which is more convenient for summing. For given $t>0$, we may iterate this smoothing estimate $k$ times on intervals of length $t/k$. This gives
\begin{equation}\notag
	C_0^{-k} t^{k} k^{-k} \nu^{\frac{N+1}{N+3} k} \norm{\left[ |\p_x|^{\frac{2}{N+3}} q^2(|\p_x|) \right]^k f(\cdot,t)}_{L^2(\Omega)} \leq 2^{-k} \norm{f_{\rm in}}_{L^2} \, .
\end{equation}
for all $t > 0$ and $k \geq 0$. By Stirling's formula for $k^{-k}$, the Taylor series of $\exp$, and the triangle inequality, we have
\begin{equation}\notag
	\norm{ \exp \left( \mu_0 \nu^{\frac{N+1}{N+3}} |\p_x|^{\frac{2}{N+3}} q^2(|\p_x|) t \right) f(\cdot,t) }_{L^2(\Omega)} \les \norm{f_{\rm in}}_{L^2} \, ,
\end{equation}
where $\mu_0 \ll C_0^{-1}$. This gives the desired Gevrey smoothing.
\end{proof}

\begin{appendix}

\numberwithin{theorem}{section}

\section{Proof of sample interpolation inequality}
\label{app:interpolationproof}

\begin{proof}
It suffices to prove the (apparently) weaker inequality
\begin{equation}
    \label{eq:weakerinequality}
    \| f \|_{Q^{\sfrac{1}{2}}_{y\p_x}} \les \| f \|_{L^2_x \dot H^1_y} + \| y \p_x f \|_{L^2_x \dot H^{-1}_y}
\end{equation}
for all $f \in C^\infty_0$. After rescaling in $y$, we have
\begin{equation}\notag
    \| f \|_{Q^{\sfrac{1}{2}}_{y \p_x}} \les \lambda \| f \|_{L^2_x \dot H^1_y} + \lambda^{-1} \| y \p_x f \|_{L^2_x \dot H^{-1}_y}.
\end{equation}
Optimizing $\lambda$ gives the full inequality. The optimization step is possible because, by assumption, each term on the RHS of~\eqref{eq:weakerinequality} is non-zero.

For $\ell > 0$, consider the mollification-in-$x$ at scale $\ell^3$:
\begin{equation}\notag
    S_\ell f(x,y) := \ell^{-3}  \int_{\R} f(x-s,y)  \varphi(\ell^{-3} s) \, ds.
\end{equation}
We will require the properties
\begin{equation}
    \label{eq:mollificationproperty1}
    \| S_\ell f - f \|_{L^2} \les \ell \| f \|_{Q^{\sfrac{1}{3}}_{\p_x}}
\end{equation}
and
\begin{equation}
    \label{eq:mollificationproperty2}
    \| \p_x S_\ell f \|_{L^2} \les \ell^{-2} \| f \|_{Q^{\sfrac{1}{3}}_{\p_x}}.
\end{equation}
Let $h \in \R$. We expand
\begin{equation}\notag
\begin{aligned}
 \| e^{h^2 y\p_x} f  - f\|_{L^2} &\leq \| e^{h^2 y\p_x} f - e^{h^2 y\p_x} S_\ell f \|_{L^2} \\
 &+ \| e^{h^2 y\p_x} S_\ell f -  S_\ell f \|_{L^2} \\
 &+ \| S_\ell f - f \|_{L^2} \, .
\end{aligned}
\end{equation}
The first and third terms on the right-hand side are estimated in the same way, namely, by the property~\eqref{eq:mollificationproperty1} of the mollification operator.
We focus on the second term. We have
\begin{equation}\notag
    \frac{d}{d h} \| e^{h^2 y\p_x} S_\ell f -  S_\ell f \|_{L^2}^2 = 4 h \langle e^{h^2 y \p_x} S_h f, e^{h^2 y \p_x} y \p_x S_h f \rangle = 4 h \langle e^{h^2 y \p_x} S_h^* S_h f, y\p_x f \rangle \, ,
\end{equation}
which is estimated by $\dot H^1/\dot H^{-1}$ duality:
\begin{equation}\notag
    h |\langle e^{h^2 y \p_x} S_h^* S_h f, y\p_x f \rangle| \les h \| e^{h^2 y \p_x} S_h^* S_h f \|_{L^2_x \dot H^1_y} \|  y\p_x f \|_{L^2_x \dot H^{-1}_y} \, .
\end{equation}
It remains to estimate $h \| \p_y e^{h^2 y \p_x} S_h^* S_h f \|_{L^2}$. We have
\begin{equation}\notag
    h \p_y e^{h^2 y \p_x} S_h^* S_h f = h^3 \p_x e^{h^2 y \p_x} S_h^* S_h f + h e^{h^2 y \p_x} \p_y S^*_h S_h f - h \p_y S^*_h S_h f \, .
\end{equation}
The second and third terms are estimated in $L^2$ by $h \| \p_y f \|_{L^2}$. For the 1st term, we allow $\p_x$ to hit the mollifier and use the property~\eqref{eq:mollificationproperty2}:
\begin{equation}\notag
    h^3 \| \p_x e^{h^2 y \p_x} S_h^* S_h f \|_{L^2} \les h^3 \ell^{-2} \| f \|_{Q^{\sfrac{1}{3}}_{\p_x}} \les \ell \| f \|_{Q^{\sfrac{1}{3}}_{\p_x}}
\end{equation}
for $h \leq \ell$.
This gives
\begin{equation}\notag
    \left| \frac{d}{d h} \| e^{h^2 y\p_x} S_\ell f -  S_\ell f \|_{L^2}^2 \right| \les (h \| f \|_{L^2_x \dot H^1_y} + \ell \| f \|_{Q^{\sfrac{1}{3}}_{\p_x}}) \|  y\p_x f \|_{L^2_x \dot H^{-1}_y} \, .
\end{equation}
In particular, for $h = \ell$, we have
\begin{equation}\notag
    \| e^{h^2 y\p_x} S_\ell f -  S_\ell f \|_{L^2}^2 \les h^2 (\| f \|_{L^2_x \dot H^1_y} + \| f \|_{Q^{\sfrac{1}{3}}_{\p_x}}) \|  y\p_x f \|_{L^2_x \dot H^{-1}_y} \, .
\end{equation}
Combining all estimates with $h = \ell$, we have
\begin{equation}\notag
    \| e^{h^2 y\p_x} f  - f\|_{L^2}^2 \les h^2 \| f \|_{Q^{\sfrac{1}{3}}_{\p_x}}^2 + h^2 (\| f \|_{L^2_x \dot H^1_y} + \| f \|_{Q^{\sfrac{1}{3}}_{\p_x}}) \|  y\p_x f \|_{L^2_x \dot H^{-1}_y}
\end{equation}
or
\begin{equation}\notag
\begin{aligned}
    \| f \|_{Q^{\sfrac{1}{2}}_{y \p_x}}^2 &\les \| f \|_{Q^{\sfrac{1}{3}}_{\p_x}}^2 +  (\| f \|_{L^2_x \dot H^1_y} + \| f \|_{Q^{\sfrac{1}{3}}_{\p_x}}) \|  y\p_x f \|_{L^2_x \dot H^{-1}_y} \\
    &\les \| f \|_{Q^1_{\p_y}}^{\sfrac{2}{3}} \| f \|_{Q^{\sfrac{1}{2}}_{y \p_x}}^{\sfrac{4}{3}} + \| f \|_{L^2_x \dot H^1_y} \| y \p_x f \|_{L^2_x \dot H^{-1}_y} + \| f \|_{Q^1_{\p_y}}^{\sfrac{1}{3}} \| f \|_{Q^{\sfrac{1}{2}}_{y \p_x}}^{\sfrac{2}{3}} \| y \p_x f \|_{L^2_x \dot H^{-1}_y} \, ,
    \end{aligned}
\end{equation}
Then Young's inequality produces~\eqref{eq:weakerinequality}.
\end{proof}
\end{appendix}

\bibliographystyle{abbrv}
\bibliography{dissipation}

\end{document}